\documentclass[11pt]{amsart}
\usepackage{geometry}                
\geometry{letterpaper}                   
\usepackage{graphicx}
\usepackage{amssymb}
\usepackage{epstopdf}
\usepackage[mathscr]{euscript}
\DeclareGraphicsRule{.tif}{png}{.png}{`convert #1 `dirname #1`/`basename #1 .tif`.png}

\title{AK-ASD Twistor}


\begin{document}
\title{Almost-K\"ahler anti-self-dual metrics on $K3\#3\overline{\mathbb{CP}_{2}}$}

\author{Inyoung Kim}
\maketitle
 
\begin{abstract}
  Donaldson-Friedman constructed anti-self-dual classes on $K3\#3\overline{\mathbb{CP}_{2}}$ using twistor space.
  We show that some of these conformal classes have almost-K\"ahler representatives.

  \end{abstract}
\maketitle

\section{\large\textbf{Introduction}}\label{S:Intro}

On a smooth, oriented riemannian 4-manifold $(M, g)$, 2-forms decomposes as self-dual and anti-self-dual 2-forms $\Lambda^{2}=\Lambda^{+}\oplus \Lambda^{-}$,
 according to the eigenvalue of the Hodge star operator $*$. By definition, a 2-form $\alpha$ is called self-dual if $*\alpha=\alpha$. 
 Then the curvature operator takes the form according to this decomposition of 2-forms $\Lambda^{2}=\Lambda^{+}\oplus\Lambda^{-}$,

 \[R=
 \begin{pmatrix} W_{+}+\frac{s}{12}&\dot{r}\\
 \dot{r}&W_{-}+\frac{s}{12}\\
 \end{pmatrix},\]
 where $\dot{r}$ comes from the trace-free Ricci curvature. 
 If $W_{+}=0$, then $g$ is called to be an anti-self-dual metric.

Let $(M, \omega)$ be a 4-dimensional symplectic manifold. 
The space of almost-complex structures which are compatible with the symplectic form, 
$\omega(v, w)=\omega(Jv, Jw)$ is nonempty and contractible [23].
If we define $g(v, w):=\omega(x, Jy)$, then $g$ is a metric which is compatible with $J$, $g(v, w)=g(Jv, Jw)$. 
We call such a metric $g$ an almost-K\"ahler metric. 
Note that $\omega$ is a self-dual harmonic 2-form of length $\sqrt{2}$ with respect to $g$. 
On the other hand, by conformal invariant properties, if we have an anti-self-dual metric $g$ and a nondegenerate self-dual harmonic 2-form, 
then, there exists a unique almost-K\"ahler anti-self-dual metric  in the conformal class of $g$ [5].

\vspace{20pt}

Let $(M, g)$ be an oriented, smooth, compact Riemannian 4-manifold.
Then there is Weitzenb\"ock formula for a self-dual 2-form $\omega$, 
\[\Delta\omega=\nabla^{*}\nabla\omega-2W_{+}(\omega, \cdot)+\frac{s}{3}\omega,\]
where $s$ is the scalar curvature. 

Let $(M, g, \omega)$ be an almost-K\"ahler anti-self-dual 4-manifold. 
Then $\omega$ is a self-dual harmonic 2-form of length $\sqrt{2}$, we get 
\[0=|\nabla\omega|^{2}+\frac{2s}{3}.\]

Thus, $s\leq0$ in case of almost-K\"ahler anti-self-dual metrics.
Moreover, $s\equiv 0$ if and only if $(g, J, \omega)$ is a K\"ahler manifold.
If an almost-K\"ahler anti-self-dual metric is not K\"ahler, we call it a strictly almost-K\"ahler anti-self-dual metric.
Let $(M, g, J)$ be a K\"ahler manifold with $\int_{M}sd\mu_{g}\geq0$.
Then either the first Chern class $c_{1}^{\mathbb{R}}\in H^{2}(M, \mathbb{R})=0$ or its Kodaira dimension is $-\infty$ [29]. 
Using the formula 
\[c_{1}^{2}=(2\chi+3\tau)(M)=\frac{1}{4\pi^{2}}\int_{M}(\frac{s^{2}}{24}+2|W_{+}|^{2}-\frac{|\dot{r}|^{2}}{2})d\mu_{g},\]
if $\mathbb{CP}_{2}\#n\overline{\mathbb{CP}_{2}}$ admit scalar-flat K\"ahler metrics, then $n\geq 10$. 
In [14], it was shown that there exist strictly almost-K\"ahler anti-self-dual metrics by deforming scalar-flat K\"ahler metrics on certain manifolds. 
Using the Seiberg-Witten invariant, it was shown that if $\mathbb{CP}_{2}\#n\overline{\mathbb{CP}_{2}}$ admits an almost-K\"ahler anti-self-dual metric, then $n\geq10$ [14].

In this respect, it might be an  interesting question whether there exists a manifold which admits almost-K\"ahler anti-self-dual metrics but not scalar-flat K\"ahler metrics. 
$K3\#n\overline{\mathbb{CP}_{2}}$ for $n\geq 3$ are candidates 
since they do not admit scalar-flat K\"ahler metrics [29] but it was shown that there exists anti-self-dual metrics on them [6]. 
In this paper, we show that some of anti-self-dual conformal classes constructed by Donaldson-Friedman in [6] are almost-K\"ahler,
using twistor interpretation of self-dual harmonic 2-forms. 

\vspace{20pt}

\newtheorem{Theorem}{Theorem}
\begin{Theorem}
There exist strictly almost-K\"ahler anti-self-dual metrics on $K3\#n\overline{\mathbb{CP}_{2}}$ for $n\geq 3$. 
\end{Theorem}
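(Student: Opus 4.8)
The plan is to reduce the theorem to the existence of a single nowhere-vanishing self-dual harmonic 2-form in one of the Donaldson--Friedman conformal classes, and then to produce such a form by transporting the hyper-K\"ahler 2-forms of $K3$ across the twistor gluing. First the reduction. For a self-dual 2-form one has $\omega\wedge\omega=|\omega|^{2}d\mu_{g}$, so a self-dual harmonic 2-form is nondegenerate (symplectic) precisely when it is nowhere zero. Hence, by the conformal result [5] quoted above, any Donaldson--Friedman anti-self-dual conformal class carrying a nowhere-vanishing self-dual harmonic 2-form contains a unique almost-K\"ahler anti-self-dual representative. Strictness is then automatic: the discussion above shows an almost-K\"ahler anti-self-dual metric is K\"ahler iff $s\equiv0$, i.e. iff it is scalar-flat K\"ahler, but $K3\#n\overline{\mathbb{CP}_{2}}$ admits no scalar-flat K\"ahler metric for $n\geq3$ [29]. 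Thus the entire problem is to make one self-dual harmonic 2-form nowhere zero.

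Next I set up the twistor picture. In the Donaldson--Friedman construction one blows up a real twistor line in the twistor space $Z_{K3}$ of $(K3,g_{CY})$ and in each of the $n$ twistor spaces of $(\overline{\mathbb{CP}_{2}},g_{FS})$, glues the resulting spaces along the exceptional quadrics $Q\cong\mathbb{P}^{1}\times\mathbb{P}^{1}$ to form a singular space $Z_{0}$, and smooths $Z_{0}$ to a one-parameter family of honest twistor spaces $Z_{t}$ of anti-self-dual metrics $g_{t}$ on $M=K3\#n\overline{\mathbb{CP}_{2}}$. The twistor interpretation of self-dual harmonic 2-forms enters through two facts. (i) Via the Penrose transform the real space $\mathcal{H}^{+}(M,g_{t})$ is identified with a space of real cohomology classes on $Z_{t}$; since $\overline{\mathbb{CP}_{2}}$ has $b^{+}=0$, its twistor contribution vanishes, so $b^{+}(M)=b^{+}(K3)=3$ and all of $\mathcal{H}^{+}$ originates from the $K3$ factor. (ii) A nowhere-vanishing self-dual 2-form is the same datum as a smooth section $M\to Z_{t}$ of the twistor fibration whose image is disjoint from its conjugate, namely the compatible almost-complex structure $J_{\omega}$ defined by $\omega$. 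The key quantitative input is that for the hyper-K\"ahler metric on $K3$ the three K\"ahler forms are pointwise orthogonal of equal length, so the evaluation map $\mathcal{H}^{+}(K3)\to\Lambda^{+}_{x}$ is an isomorphism at \emph{every} $x$; consequently on $K3$ every nonzero self-dual harmonic form is nowhere zero.

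The core of the argument is then the nondegeneracy on $M$. I would track the three generators of $\mathcal{H}^{+}$ through the smoothing $Z_{0}\rightsquigarrow Z_{t}$ and show that the evaluation map $\mathcal{H}^{+}(M,g_{t})\to\Lambda^{+}_{x}$ stays an isomorphism for all $x\in M$; then, exactly as on $K3$, every nonzero class is represented by a nowhere-vanishing form and one may take $\omega$ to be any of them. Away from the gluing regions $g_{t}$ is, for small $t$, a small perturbation of $g_{CY}$ and of $g_{FS}$, so by elliptic estimates the harmonic forms converge to those on $K3$ and the evaluation map remains invertible there by continuity. The delicate point is the neck and the $\overline{\mathbb{CP}_{2}}$-caps: there $b^{+}=0$ furnishes no intrinsic self-dual harmonic forms, so one must show that the forms inherited from the $K3$ side still span $\Lambda^{+}_{x}$ and acquire no zero. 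I expect this to be the main obstacle, and I would address it on the twistor space, estimating the relevant cohomology of $Z_{t}$ near $Q$ and using the explicit normal-bundle geometry $\mathcal{O}(1)\oplus\mathcal{O}(1)$ of the blown-up twistor lines to control the restriction of $\omega$ to the neck; alternatively, by a parametrix/gluing construction producing an approximate nowhere-zero self-dual harmonic form whose harmonic correction is $C^{0}$-small relative to $|\omega|\sim\sqrt{2}$ on the $K3$ bulk, hence cannot create zeros.

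Finally, once a nowhere-vanishing $\omega\in\mathcal{H}^{+}(M,g_{t})$ is in hand, [5] yields an almost-K\"ahler anti-self-dual metric in $[g_{t}]$, and the obstruction [29] recalled above shows it is strictly almost-K\"ahler. Since the Donaldson--Friedman construction applies for every $n\geq3$ and the hyper-K\"ahler input is always that of $K3$, the conclusion holds for all such $n$, which proves the theorem.
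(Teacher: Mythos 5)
Your reduction (nondegenerate $=$ nowhere zero for a self-dual form, the conformal rescaling result of [5], and strictness from the non-existence of scalar-flat K\"ahler metrics on $K3\#n\overline{\mathbb{CP}_{2}}$ [29]) is exactly the paper's, and is fine. But the heart of the theorem --- producing a self-dual harmonic 2-form on $(K3\#n\overline{\mathbb{CP}_{2}},g_{t})$ with no zeros --- is not proved in your proposal; it is only flagged as ``the main obstacle'' with two unexecuted sketches. The continuity argument you offer works only on compact sets where $g_{t}$ converges to $g_{CY}$; on the necks and on the $\overline{\mathbb{CP}_{2}}$-caps the metric is not close to the $K3$ metric anywhere, the caps carry no self-dual harmonic forms of their own ($b^{+}=0$), and your parametrix alternative only asserts the harmonic correction is $C^{0}$-small ``on the $K3$ bulk,'' which says nothing about zeros appearing precisely in the cap region where the transplanted form must be extended by hand. (Recall that zeros of self-dual harmonic forms are generically nonempty circles, so smallness of the correction away from the caps cannot rule them out.) Your stronger goal --- that the evaluation map $\mathcal{H}^{+}\to\Lambda^{+}_{x}$ stay an isomorphism at \emph{every} $x$ --- is both more than is needed and unsupported by the tools you invoke; one nowhere-vanishing form suffices.

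The paper closes this gap by purely cohomological means on the \emph{singular} fiber rather than by gluing estimates on $Z_{t}$. It works with the sheaf $\mathcal{O}_{\mathcal{Z}}(K)\otimes 2\mathcal{I}_{\tilde{\mathscr{F}}}$ --- the twist by \emph{twice} the ideal sheaf is the decisive choice, since it makes the restriction to each quadric $Q_{i}$ equal to $\mathcal{O}(-4,0)$ with $h^{1}=3$, instead of $\mathcal{O}(-3,-1)$ with $h^{1}=0$ --- computes $h^{1}(Z_{0},(\mathcal{O}_{\mathcal{Z}}(K)\otimes2\mathcal{I}_{\tilde{\mathscr{F}}})_{0})=3$ via the normalization exact sequence, and shows that any nonzero real class restricts nontrivially to every real twistor line: on the $\tilde{Z}$ side by a removable-singularity argument (Lemmas 9--11: the corresponding form extends over the punctured $K3$ and is parallel, hence nowhere zero, because $s=W_{+}=0$ for the Ricci-flat K\"ahler metric), and on each $\tilde{F}_{i}$ side by proving the restriction $r_{2}:H^{1}(\tilde{F},\mathcal{O}_{\tilde{F}}(K_{\tilde{F}}\otimes3[Q]))\to H^{1}(Q,\mathcal{O}_{Q}(0,-4))$ is an isomorphism (Corollary 2, Lemma 15) --- this cohomological step is precisely what replaces the analytic estimate you were missing on the caps. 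Constancy of $h^{1}$ then lets Grauert's semicontinuity/base-change theorem (Theorem 4) extend the nondegenerate real class from $Z_{0}$ to nearby fibers $Z_{t}$, and openness of nondegeneracy (compactness of the space of twistor lines) finishes the argument. Without an executed version of one of your two sketches --- and the second, as stated, would fail where it matters --- your proposal does not constitute a proof.
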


\vspace{20pt}

$\mathbf{Acknowledgement}$ : The author is most grateful to Prof. Claude LeBrun for suggesting this problem
and precious advices. The author is thankful to Jongsu Kim  for helpful discussions. The author would like to thank Eui-Sung Park for helpful comment regarding Lemma 5.
The author is very thankful to Chanyoung Sung and Korea National University of Education for supports and opportunities, by which 
this work has been able to be carried out. 
This article is supported by NRF-2018R1D1A3B07043346.

\vspace{50pt}

\section{\large\textbf{Twistor spaces}}\label{S:Intro}
An oriented, riemannian 4-manifold $(M, g)$  with an anti-self-dual metric(ASD) corresponds to the complex 3-manifold, which is called the twistor space [2, 24]. 
Consider the unit sphere bundle of self-dual 2-form $p: \mathbf{S}(\Lambda^{+})\to M$. 
 Using the Levi-Civita connection, we can split the tangent bundle of $Z:=\mathbf{S}(\Lambda^{+})$ by
\[T_{z}(Z)=V_{z}\oplus (p^{*}TM)_{z}.\]
On $V_{z}$, which is the tangent space of the fiber, we define $J_{V}$ be the $-90^{\circ}$ rotation and on $(p^{*}TM)_{z}$, we put the almost-complex structure determined by $z$. 
The fundamental theorem by Penrose and Atyiah, Hitchin, Singer is that this complex structure is integrable if $g$ is anti-self-dual [2, 24]. 
We note that the twistor space $Z$ can be also given by $P(\mathbf{V}_{+})$, where $\mathbf{V}_{+}$ is the positive spinor bundle [12]. 
Moreover, locally there exists the bundle $\mathbf{H}$ such that $\mathbf{H}$ is the Hopf bundle on each fiber $\mathbb{CP}_{1}$ 
and $\mathbf{H}^{2}$ exists globally [12]. 
Then it is shown that the canonical line bundle $K$ of $Z$ is isomorphic to $\mathbf{H}^{-4}$, which we denote by $\mathcal{O}(-4)$ [2].
Also, there is a fixed-point free anti-holomorphic involution $\sigma$, defined by the quaternionic structure on $\mathbf{V}_{+}$, $\sigma^{2}=Id$ [2]. 
$\sigma$ is the antipodal map on each fiber and $\sigma$ preserves each twistor line. 
We call $\sigma$ be the real structure on $Z$. 
Then $\sigma$ induces a complex-anti-linear map on $H^{1}(Z, K)$. 
We call an element of $H^{1}(Z, K)$ is real if it is invariant under $\sigma$.

Conversely, let $Z$ be a complex 3-manifold which has a fixed-point free anti-holomorphic involution $\sigma$ such that $\sigma^{2}=Id$.
Suppose further $Z$ is fibered by $\sigma$-invariant holomorphic curves $\mathbb{CP}_{1}$,
which are called the real twistor lines and normal bundle of each real twistor line is isomorphic to $\mathcal{O}(1)\oplus \mathcal{O}(1)$. 
Then there is a corresponding 4-manifold with the anti-self-dual metric [2, 24].

\vspace{20pt}
Let $(M_{i}, g_{i})$ be anti-self-dual 4-manifolds and let $Z_{i}$ be twistor spaces corresponding to $M_{i}$. 
Take a twistor line $l_{i}\subset Z_{i}$ and by blowing up this line, 
we get an exceptional divisor $Q_{i}=\mathbb{CP}_{1}\times\mathbb{CP}_{1}$ on $Z_{i}$ and 
we denote blown up manifolds by $\tilde{Z_{i}}$.
We identify $\tilde{Z}_{1}$ and $\tilde{Z}_{2}$ along $Q_{i}$ by interchanging factors $Q_{1}$ and $Q_{2}$
and we denote the identified singular manifold with normal crossing divisor $Q$ by $Z_{0}$.
A real structure $\sigma_{i}$ on $Z_{i}$ extends to $\tilde{Z}_{i}$ such that $\tilde{\sigma}_{i}|_{Z_{i}}=\sigma_{i}$ and therefore induces the real structure $\sigma_{0}$ on $Z_{0}$. 
It was shown in [6] that  if $H^{2}(Z_{i}, \Theta_{Z_{i}})=0$, 
then there exists a complex deformation of $Z_{0}$
and this deformation produces anti-self-dual metrics on $M_{1}\#M_{2}$. 
From this, it was shown that $n\overline{\mathbb{CP}_{2}}$ admit anti-self-dual metrics [6]. 

In this paper, we are in particular interested in the existence of almost-K\"ahler anti-self-dual metrics on $K_{3}\#3\overline{\mathbb{CP}_{2}}$, more generally $K_{3}\#n\overline{\mathbb{CP}_{2}}$, $n\geq 3$.
Ricci-flat K\"ahler metrics are  anti-self-dual. K3 surface is known to admit such metrics [30] and 
we consider the corresponding twistor space $Z$. Note that $H^{2}(Z, \Theta_{Z})\neq 0$ [6]. 
However, by overcoming the obstruction, it was shown that $NK_{3}\#n\overline{\mathbb{CP}_{2}}$
admit anti-self-dual metrics for $N>0$ and $n\geq 2N+1$ [6]. 

\begin{Theorem}
(Donaldson-Friedman) There exist anti-self-dual metrics on $NK_{3}\#n\overline{\mathbb{CP}_{2}}$
for $N>0$ and $n\geq 2N+1$ [6].

\end{Theorem}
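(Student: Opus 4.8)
The plan is to realize $NK_3\#n\overline{\mathbb{CP}_2}$ through the Donaldson-Friedman gluing of twistor spaces and then to smooth the resulting singular space in a way that respects the real structure. Concretely, I would take as building blocks the twistor space $Z_{K3}$ of a hyperk\"ahler K3 (Ricci-flat K\"ahler, hence anti-self-dual) and the twistor space $F$ of $\overline{\mathbb{CP}_2}$, which is the flag threefold $\mathbb{P}(T\mathbb{CP}_2)$, a Fano manifold satisfying $H^2(F,\Theta_F)=0$. Iterating the blow-up-and-glue operation described above along $\sigma$-invariant twistor lines, I would assemble a connected, reducible normal-crossing threefold $Z_0$ whose components are $N$ copies of $\widetilde{Z}_{K3}$ and $n$ copies of $\widetilde F$, carrying an induced real structure $\sigma_0$ and modeling the connected sum $NK_3\#n\overline{\mathbb{CP}_2}$. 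By the twistor correspondence recalled above, the whole problem reduces to producing a $\sigma_0$-equivariant smoothing $Z_t$ of $Z_0$: for small $t$ the real twistor lines deform to a $\sigma_t$-invariant family of rational curves sweeping out $Z_t$, and since having normal bundle $\mathcal O(1)\oplus\mathcal O(1)$ is an open condition, the inverse Penrose-Atiyah-Hitchin-Singer construction then yields an anti-self-dual metric on the connected sum.

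Next I would set up the deformation theory of the normal-crossing space $Z_0=Y_1\cup_Q\cdots$, where the double locus $Q\cong\mathbb{P}^1\times\mathbb{P}^1$ is the exceptional quadric of each blow-up. First-order deformations and obstructions are governed by $T^1_{Z_0}$ and $T^2_{Z_0}$, computed from the local-to-global spectral sequence $E_2^{p,q}=H^p(Z_0,\mathcal T^q_{Z_0})$, with $\mathcal T^0_{Z_0}=\Theta_{Z_0}$ and $\mathcal T^1_{Z_0}=N_{Q/Y_1}\otimes N_{Q/Y_2}$ the smoothing sheaf supported on $Q$. The decisive structural input is that blowing up a twistor line, whose normal bundle is $\mathcal O(1)\oplus\mathcal O(1)$, produces $N_{Q/\widetilde Z}\cong\mathcal O(1,-1)$, so that the factor-interchanging identification of the two rulings makes
\[N_{Q/Y_1}\otimes N_{Q/Y_2}\cong\mathcal O_Q(1,-1)\otimes\mathcal O_Q(-1,1)\cong\mathcal O_Q.\]
This is exactly the $d$-semistability condition: the smoothing sheaf is trivial, and its constant nowhere-vanishing section furnishes a candidate first-order smoothing direction. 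Producing a genuine smoothing then amounts to lifting this section through $T^1_{Z_0}\to H^0(Q,\mathcal T^1_{Z_0})$ and extending the resulting deformation to all orders, the obstructions living in $H^2(Z_0,\Theta_{Z_0})$ and $T^2_{Z_0}$.

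The main obstacle, and the whole reason for the hypothesis $n\geq 2N+1$, is that unlike the Fano blocks the K3 twistor space is genuinely obstructed: $H^2(Z_{K3},\Theta_{Z_{K3}})\neq 0$, as noted in the excerpt. I would analyze a Mayer-Vietoris-type sequence relating $H^\bullet(Z_0,\Theta_{Z_0})$ to the log-tangent cohomologies $H^\bullet(Y_i,\Theta_{Y_i}(-\log Q))$ of the components (vector fields tangent to $Q$) and to $H^\bullet(Q,\Theta_Q)$, and track the connecting map $H^0(Q,\mathcal T^1_{Z_0})\to H^2(Z_0,\Theta_{Z_0})$. The point is that the Fano necks $\widetilde F$ contribute no $H^2$ but supply abundant deformation parameters and smoothing sections, whereas each K3 block injects a fixed amount of obstruction together with its $\sigma_0$-conjugate forced by the real structure. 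The heart of the argument is a dimension count: one must show that with at least $2N+1$ copies of $\overline{\mathbb{CP}_2}$ the obstruction classes contributed by the $N$ K3 factors are annihilated by the freedom supplied by the necks, so that the image of the smoothing section under the connecting map vanishes and the first-order smoothing extends. The precise bookkeeping of how many obstruction dimensions each neck can absorb against those injected per K3 factor is what produces the inequality $n\geq 2N+1$, and carrying it out is the main difficulty of the theorem; once the obstruction is cleared, convergence of the order-by-order smoothing yields the desired $\sigma_t$-equivariant $Z_t$ and completes the construction.
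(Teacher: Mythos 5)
The paper never proves this statement: Theorem 2 is quoted verbatim from Donaldson--Friedman [6], so the comparison must be with the argument in [6]. Your first two paragraphs reproduce the standard framework correctly -- the blocks $Z_{K3}$ and the flag threefold $F$, blowing up $\sigma$-invariant twistor lines, the factor-swapping identification giving $N_{Q/Y_1}\otimes N_{Q/Y_2}\cong\mathcal{O}_Q(1,-1)\otimes\mathcal{O}_Q(-1,1)\cong\mathcal{O}_Q$ ($d$-semistability), and the fact that a $\sigma$-equivariant smoothing is automatically a twistor space because $\mathcal{O}(1)\oplus\mathcal{O}(1)$ normal bundles and fixed-point-free real structures persist under small deformation. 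But that framework is precisely the \emph{unobstructed} Donaldson--Friedman theorem, which requires $H^2(Z_i,\Theta_{Z_i})=0$ and therefore does not apply here, since $H^2(Z_{K3},\Theta_{Z_{K3}})\neq 0$. The entire content of the statement you were asked to prove is the obstruction-clearing step that produces the inequality $n\geq 2N+1$, and at exactly that point your proposal stops: you write that ``the precise bookkeeping\dots is what produces the inequality $n\geq 2N+1$, and carrying it out is the main difficulty of the theorem.'' Naming the difficulty is not resolving it; as written, the proposal proves nothing beyond the $H^2=0$ case.

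Moreover, the mechanism you gesture at -- Fano necks ``absorbing'' obstruction dimensions in a Mayer--Vietoris count -- would fail. In any such exact sequence the obstruction contributions of the $N$ K3 blocks (terms of the type $H^2(\tilde Z,\Theta_{\tilde Z}(-\log \mathscr{Q}))$) persist no matter how many copies of $\tilde F$ you attach; adding necks enlarges the parameter space but does not shrink $H^2(Z_0,\Theta_{Z_0})$, so no dimension count of the form ``parameters supplied by necks $\geq$ obstructions injected by K3's'' can succeed, and indeed such a count would not naturally produce the linear bound $2N+1$. What Donaldson--Friedman actually do is show that the obstruction, though nonzero, \emph{annihilates the smoothing directions}: they exploit the holomorphic twistor fibration $Z_{K3}\to\mathbb{CP}_1$ special to the hyperk\"ahler metric to identify the relevant obstruction classes (via Serre duality and Leray) with sections of a line bundle of degree governed by $N$ on the rational base, and each $\overline{\mathbb{CP}_2}$ glued along a twistor line imposes vanishing at the corresponding point of the base; with $n\geq 2N+1$ attachment points a section of degree $2N$ vanishing at all of them is identically zero. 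That evaluation-at-attachment-lines argument, combined with a Kuranishi-type transversality statement compatible with the real structure, is where the inequality comes from. Without it, or a genuine substitute, your outline establishes only the existence of ASD metrics on $n\overline{\mathbb{CP}_2}$-type connected sums of unobstructed blocks, not the theorem at hand.
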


This  method was developed further by LeBrun and Singer [21] when a 4-manifold $M$ admits an isometric $\mathbb{Z}_{2}$-action with $k$-isolated fixed points. 
Moreover, by considering cohomological interpretation of positive scalar curvature condition [1], it was shown in [19] that 
$\tilde{X}\#n\overline{\mathbb{CP}_{2}}$ admit an anti-self-dual metric of positive scalar curvature if $M$ does and $H^{2}(Z, \Theta_{Z})=0$,
where $Z$ is the twistor space of $M$.
Here $X=M/\mathbb{Z}_{2}$ and $\tilde{X}$ be the oriented manifold by replacing singularity of $M/\mathbb{Z}_{2}$
by 2-sphere of self-intersection $-2$. 
Also, Kalafat showed that if $(M_{i}, g_{i})$ are anti-self-dual 4-manifolds of positive scalar curvature such that
their twistor spaces $Z_{i}$ satisfy $H^{2}(Z_{i}, \Theta_{Z_{i}})=0$, then $M_{1}\#M_{2}$ admits an anti-self-dual metric of positive scalar curvature [13]. 
In this paper, we use this method of LeBrun in the construction of almost-K\"ahler ASD metrics on $K3\#n\overline{\mathbb{CP}_{2}}$ for $n\geq 3$.

\vspace{20pt}

Let $M$ be an anti-self-dual space and let $Z$ be its twistor space. 
It was shown there are correspondences between certain cohomology groups on a twistor space $Z$
and solutions of differential equations on a 4-manifold $M$ [7], [11]. 
One of these correspondences we need in this paper is the following. 
Consider $Spin(4)=SU(2)\times SU(2)$. 
Let $\mathbf{V}_{\pm}$ be the basic Spin representations of two factors. 
We denote $\mathbf{S}_{+}^{m}$ for $\mathbf{S}^{m}\mathbf{V}_{+}$, where $\mathbf{S}^{m}$ denote the symmetric power
and $\mathbf{S}_{-}^{m}$ for $\mathbf{S}^{m}\mathbf{V}_{-}$.
By following [11], 
we note the following operator,
\[\mathbf{D}_{m}:\Gamma(\mathbf{S}_{+}^{m})\to \Gamma(\mathbf{S}^{m-1}_{+}\otimes \mathbf{S}_{-})\]
with weight $\frac{1}{2}(m+2)$.

\begin{Theorem}
[7,11] Let $M$ be an anti-self-dual space and $Z$ be its twistor space. 
Then 
\[T: H^{1}(Z, \mathcal{O}(-m-2))\to \Gamma(\mathbf{S}^{m}_{+}) \]
defines an isomorphism onto the space of solutions to $\mathbf{D}_{m}\phi=0$, for $m\geq 0$. 
In particular, when $m=2$, a real element of $H^{1}(Z, \mathcal{O}(-4))$ corresponds to a real self-dual closed 2-form. 
\end{Theorem}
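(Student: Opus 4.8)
The plan is to realize $T$ through the Penrose transform, exploiting the smooth fibration $p\colon Z=\mathbf{S}(\Lambda^{+})\to M$ whose fibers are the twistor lines $\mathbb{CP}_{1}$, on each of which $\mathcal{O}(-m-2)$ restricts to the line bundle of degree $-m-2$. First I would resolve the sheaf $\mathcal{O}(-m-2)$ by its Dolbeault complex $\mathcal{A}^{0,\bullet}(\mathcal{O}(-m-2))$ on $Z$ and filter it by horizontal degree, splitting $(0,1)$-forms into a vertical part (along the fibers) and a horizontal part (along $p^{*}TM$ with the fiberwise complex structure $J_{z}$). Since the vertical bundle is a complex line, this filtration has only two nonzero columns and produces a spectral sequence whose $E_{1}$-page is governed by the direct images $R^{q}p_{*}\mathcal{O}(-m-2)$ with $q\in\{0,1\}$.

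Next I would compute these direct images fiberwise. Because $-m-2<0$ for $m\geq 0$, one has $R^{0}p_{*}\mathcal{O}(-m-2)=0$, while Serre duality on each fiber gives $H^{1}(\mathbb{CP}_{1},\mathcal{O}(-m-2))\cong H^{0}(\mathbb{CP}_{1},\mathcal{O}(m))^{*}$, of dimension $m+1$. Using the tautological identification of the fiber with $\mathbb{P}(\mathbf{V}_{+})$, these assemble into a smooth complex vector bundle on $M$ canonically isomorphic to $\mathbf{S}^{m}_{+}$, so $R^{1}p_{*}\mathcal{O}(-m-2)\cong\mathbf{S}^{m}_{+}$ up to the stated weight. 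The vanishing of $R^{0}$ forces the only contribution to total degree $1$ to sit in the $E_{1}^{0,1}$ slot $\Gamma(M,\mathbf{S}^{m}_{+})$; the incoming differential is zero and all higher differentials are trivial, so
\[H^{1}(Z,\mathcal{O}(-m-2))\cong E_{2}^{0,1}=\ker\big(d_{1}\colon \Gamma(M,\mathbf{S}^{m}_{+})\to \Gamma(M,\mathbf{S}^{m-1}_{+}\otimes\mathbf{S}_{-})\big).\]

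The heart of the argument, and the step I expect to be the main obstacle, is to identify this induced first-order operator $d_{1}$ with $\mathbf{D}_{m}$. Using $TM\otimes\mathbb{C}\cong\mathbf{V}_{+}\otimes\mathbf{V}_{-}$ and the fact that the fiberwise complex structure $J_{z}$ is the one determined by the line $z\in\mathbb{P}(\mathbf{V}_{+})$, the horizontal $(0,1)$-cotangent spaces assemble, along each fiber, into a positive line bundle tensored with the factor $\mathbf{S}_{-}$; tensoring with $\mathcal{O}(-m-2)$, taking fiber cohomology, and applying Serre duality again returns $\mathbf{S}^{m-1}_{+}\otimes\mathbf{S}_{-}$, which is exactly the target of $\mathbf{D}_{m}$. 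It then remains to check, by a local computation with the Levi-Civita connection, that $d_{1}$ is the invariant first-order operator of weight $\frac{1}{2}(m+2)$, i.e. $\mathbf{D}_{m}$ itself; the careful bookkeeping of the Hopf-bundle weights is the delicate point. This exhibits $T$ as an isomorphism onto $\ker\mathbf{D}_{m}$, the space of solutions.

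Finally, for $m=2$ I would specialize: a section of $\mathbf{S}^{2}_{+}$ is a complex self-dual $2$-form under $\Lambda^{+}\otimes\mathbb{C}\cong\mathbf{S}^{2}\mathbf{V}_{+}$, and $\mathbf{D}_{2}\phi=0$ is equivalent to $d\phi=0$, hence, since $*\phi=\phi$, to $\phi$ being closed and coclosed, i.e. self-dual harmonic. The fixed-point free anti-holomorphic involution $\sigma$ induces a complex-anti-linear involution on $H^{1}(Z,\mathcal{O}(-4))$ which corresponds under $T$ to complex conjugation of $2$-forms; the $\sigma$-invariant elements therefore correspond precisely to the real self-dual closed $2$-forms, as claimed.
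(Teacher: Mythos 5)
The paper does not actually prove this theorem: it is quoted from [7] and [11], and the paper's own discussion merely unpacks the $m=2$ dictionary — restriction of a class to each twistor line, Serre duality $H^{1}(\mathbb{CP}_{1},\mathcal{O}(-4))\cong H^{0}(\mathbb{CP}_{1},\mathcal{O}(2))^{*}$, the identification of degree-$2$ homogeneous polynomials on $(\mathbf{S}^{*}_{+})_{x}$ with $(\mathbf{S}^{2}_{+})_{x}$, and the fact that $\mathbf{D}_{2}$ is $d$ on self-dual forms. Your proposal reconstructs the genuine proof behind the citation, and it is the standard one: the filtration of the Dolbeault complex of $\mathcal{O}(-m-2)$ by horizontal degree (this is exactly the spectral sequence of Eastwood--Singer [8] and, in substance, Hitchin's argument in [11]), with $T$ realized as the edge map given by restriction to fibers plus fiberwise Serre duality — consistent with how the paper describes $T$. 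Your fiberwise degree bookkeeping is right where you state it: $\Lambda^{0,1}_{\mathrm{hor}}$ restricts to each fiber as $\mathcal{O}(1)\otimes\mathbf{S}_{-}$, so the $E_{1}^{1,1}$ slot is $\Gamma(\mathbf{S}^{m-1}_{+}\otimes\mathbf{S}_{-})$, and your $m=2$ specialization and reality discussion match the paper's.

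There are, however, two concrete soft spots. First, the filtration does not have "only two nonzero columns": the vertical degree takes two values, but the horizontal degree runs up to $2$ since $\Lambda^{0,1}_{\mathrm{hor}}$ has rank $2$. Consequently $\Lambda^{0,2}_{\mathrm{hor}}$ restricts to each fiber as $\mathcal{O}(2)$, so $E_{1}^{2,0}=\Gamma\bigl(H^{0}(\mathbb{CP}_{1},\mathcal{O}(-m))\otimes\cdot\bigr)$, which vanishes for $m\geq 1$ but not for $m=0$; there the differential $d_{2}\colon E_{2}^{0,1}\to E_{2}^{2,0}$ is nonzero and is precisely how the Penrose transform produces the second-order conformal Laplacian in the helicity-zero case. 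So your blanket claim that all higher differentials are trivial fails within the stated range $m\geq 0$, and your argument as written establishes the first-order statement only for $m\geq 1$ — harmless for the paper, which uses only $m=2$, but a genuine gap against the theorem as stated (note the statement itself is loose at $m=0$, where $\mathbf{S}^{m-1}_{+}$ is undefined). Second, the identification $d_{1}=\mathbf{D}_{m}$, which you rightly flag as the heart, is deferred rather than argued; the cleanest way to close it without Hitchin's local computation is to invoke uniqueness (up to scale) of the conformally invariant first-order operator $\Gamma(\mathbf{S}^{m}_{+})\to\Gamma(\mathbf{S}^{m-1}_{+}\otimes\mathbf{S}_{-})$ of the given weight, and then check $d_{1}\neq 0$ on a single model. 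A final cosmetic point: since $p$ is only a smooth fibration, $R^{q}p_{*}$ is an abuse of notation; what you actually use — and what makes the $E_{1}$-page legitimate — is that the fiberwise Dolbeault cohomologies have constant rank and hence form smooth vector bundles, which your argument does supply implicitly.
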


Let $(X, \mathcal{O}_{X}), (Y, \mathcal{O}_{Y})$ be complex spaces.
A map between complex spaces $X, Y$ is given by $(f, f^{\#})$, where $f:X\to Y$ and $f^{\#}:\mathcal{O}_{Y}\to f_{*}\mathcal{O}_{X}$.
Here $f_{*}\mathcal{O}_{X}$ is the direct image sheaf. 
Then a map $(f, f^{\#}):(X, \mathcal{O}_{X})\to(Y, \mathcal{O}_{Y})$ is called flat over $y$ if $\mathcal{O}_{X, x}$ is a flat module over $\mathcal{O}_{Y, f(x)}$
via the map $f^{\#}:\mathcal{O}_{Y, f(x)}\to \mathcal{O}_{X, x}$.
A sheaf of $\mathcal{O}_{X}$-module $\mathcal{F}$ on $X$ is said to be flat if $\mathcal{F}_{x}$ is flat module over $\mathcal{O}_{Y, f(x)}$.

If $\mathcal{G}$ be a sheaf of $\mathcal{O}_{Y}$-module, then $f^{-1}\mathcal{G}$ is a $f^{-1}\mathcal{O}_{Y}$-module. Moreover, 
via the map $f^{-1}\mathcal{O}_{Y}\to \mathcal{O}_{X}$, $f^{*}\mathcal{G}$ is defined to be $f^{-1}\mathcal{G}\otimes_{f^{-1}\mathcal{O}_{Y}}\mathcal{O}_{X}$. 
Then $f^{*}\mathcal{G}$ is an $\mathcal{O}_{X}$-module. 

Let $X\subset Y$ and $i:X\to Y$ be the inclusion map. For a sheaf of $\mathcal{O}_{Y}$-module $\mathcal{G}$, 
we consider the following map 
\[r_{X}:H^{k}(Y, \mathcal{G})\to H^{k}(X, i^{*}\mathcal{G}).\]
We denote $\alpha|_{X}:=r_{X}(\alpha)$, where $\alpha\in H^{i}(Y, \mathcal{G})$.

Let $\alpha\in H^{1}(Z, K)$. 
Then, $\alpha|_{\mathbb{CP}_{1}}\in H^{1}(\mathbb{CP}_{1}, \mathcal{O}_{\mathbb{CP}_{1}}(-4))$. 
By Serre duality, 
\[H^{1}(\mathbb{CP}_{1}, \mathcal{O}_{\mathbb{CP}_{1}}(-4))=H^{0}(\mathbb{CP}_{1}, \mathcal{O}_{\mathbb{CP}_{1}}(2)). \]
Then $\phi\in H^{0}(\mathbb{CP}_{1}, \mathcal{O}_{\mathbb{CP}_{1}}(2))$ correspond to $\mathbf{S}_{+}^{2}$. 
Since $\mathbb{CP}_{1}=P((\mathbf{S}^{*}_{+})_{x}\setminus0)$, a section $\phi$ gives rise to a homogenous polynomial of degree 2 on $(\mathbf{S}^{*}_{+})_{x}\setminus0$,
which gives an element of $(\mathbf{S}_{+}^{2})_{x}$ [11]. 
When $m=2$, we have $\mathbf{S}^{2}_{+}=\Gamma(\Lambda^{+}_{c})$ and $\mathbf{D}_{2}$ on $\Gamma(\Lambda^{+})$ is the exterior derivative $d$ [11]. 
 Thus, an element of $H^{1}(Z, \mathcal{O}_{Z}(K))$ corresponds to a self-dual closed 2-form. 
 Since $\Delta=dd^{*}+d^{*}d$ and $d^{*}=*d*$ for a 2-form on a 4-manifold, a self-dual closed 2-form is in particular harmonic. 
A real element of $H^{1}(Z, \mathcal{O}_{Z}(K))$ corresponds to a self-dual closed real 2-form on a 4-manifold.

\newtheorem{Remark}{Remark}
\begin{Remark}
Let $Z_{t}$ be the twistor space of $(K3\#3\overline{\mathbb{CP}_{2}}, g_{t})$, where $g_{t}$ is the family of anti-self-dual metrics 
constructed by Donaldson-Friedman. 
If we have a real cohomology class $\alpha\in H^{1}(Z_{t}, \mathcal{O}_{Z_{t}}(K_{t}))$ such that $\alpha|_{l}\neq 0$ for any twistor line $l\in Z_{t}$, 
we get a nondegenerate real self-dual closed 2-form. In particular, this gives an almost-K\"ahler anti-self-dual metric on $K3\#3\overline{\mathbb{CP}_{2}}$.
\end{Remark}

\vspace{50pt}

\section{\large\textbf{Extension of a Cohomology class}}\label{S:Intro}
Let $(X, \mathcal{O}_{X}), (Y, \mathcal{O}_{Y})$ be complex spaces
with maps $(f, f^{\#})$, where $f:X\to Y$ and $f^{\#}:\mathcal{O}_{Y}\to f_{*}\mathcal{O}_{X}$.
Let $\mathcal{F}$ be a sheaf of $\mathcal{O}_{X}$-module on $X$. 
Let us define higher direct image sheaf $R^{i}f_{*}(\mathcal{F})$. This is the sheaf associated to the following presheaf on $Y$,
\[V\longmapsto H^{i}(f^{-1}(V), \mathcal{F}|_{f^{-1}(V)}).\]

\begin{Theorem}
[3, 4] Let $X, Y$ be reduced complex spaces. 
Suppose $f:X\to Y$ be a proper morphism and $\mathcal{F}$ be a coherent sheaf on $X$, which is flat over $y$ for all $y\in Y$. 
Let $\mathcal{I}_{y}$ be the ideal sheaf of $y$. 
Then we have 

(1) For all $q\geq0$, $h^{q}(X_{y}, \mathcal{F}_{y})$ is an upper semi-continuous function of $y$. 

(2) $R^{q}f_{*}(\mathcal{F})$ is locally free if $h^{q}(X_{y}, \mathcal{F}_{y})$ is constant.

(3) If $h^{q}(X_{y}, \mathcal{F}_{y})$ is constant, then the map $(R^{q}f_{*}\mathcal{F})_{y}/\mathcal{I}_{y}(R^{q}f_{*}\mathcal{F})_{y}\to H^{q}(X_{y}, \mathcal{F}_{y})$ 
is bijective.

\end{Theorem}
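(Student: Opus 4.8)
The three assertions are local on $Y$, so it suffices to establish them in a neighborhood of an arbitrary point $y_{0}\in Y$. The plan is to reduce everything to linear algebra over $\mathcal{O}_{Y}$ by invoking the central analytic input behind [3,4]. Since $f$ is proper and $\mathcal{F}$ is $Y$-flat, the direct image theory of Grauert guarantees that, after shrinking $Y$ around $y_{0}$, there is a bounded complex
\[0\to L^{0}\xrightarrow{d^{0}}L^{1}\xrightarrow{d^{1}}\cdots\xrightarrow{d^{n-1}}L^{n}\to 0\]
of finite-rank locally free $\mathcal{O}_{Y}$-modules that computes the cohomology universally: one has $\mathcal{H}^{q}(L^{\bullet})\cong R^{q}f_{*}\mathcal{F}$, and for each $y$ the fiber satisfies $H^{q}(L^{\bullet}\otimes k(y))\cong H^{q}(X_{y},\mathcal{F}_{y})$, where $k(y)=\mathcal{O}_{Y,y}/\mathcal{I}_{y}$ is the residue field. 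Producing this complex is the deep step; once it is in hand, (1)--(3) become formal.

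For (1), write $r_{q}=\mathrm{rank}\,L^{q}$ and let $\rho_{q}(y)=\dim_{k(y)}\mathrm{im}(d^{q}\otimes k(y))$ be the fiberwise rank of the differential. Then
\[h^{q}(X_{y},\mathcal{F}_{y})=\dim_{k(y)}H^{q}(L^{\bullet}\otimes k(y))=r_{q}-\rho_{q}(y)-\rho_{q-1}(y).\]
Each $\rho_{j}$ is lower semicontinuous, because the locus $\{\,\rho_{j}\geq k\,\}$ is the open set on which some $k\times k$ minor of a holomorphic matrix representing $d^{j}$ is nonzero. Hence $-\rho_{q}-\rho_{q-1}$, and therefore $h^{q}$, is upper semicontinuous, which is (1).

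For (2) and (3), suppose $h^{q}$ is constant near $y_{0}$, so that $\rho_{q}+\rho_{q-1}$ is constant. Since each summand is lower semicontinuous, its value can only decrease along closed subsets; a sum of two such functions being constant forces each of $\rho_{q}$ and $\rho_{q-1}$ to be locally constant. A morphism of vector bundles over a reduced complex space whose fiberwise rank is locally constant has locally free kernel, image, and cokernel, each realized as a subbundle. Thus $\ker d^{q}$ is locally free and $\mathrm{im}\,d^{q-1}$ is a subbundle of it, so $R^{q}f_{*}\mathcal{F}=\ker d^{q}/\mathrm{im}\,d^{q-1}$ is locally free, giving (2). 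Because kernel and image are subbundles, their formation commutes with the base change $\otimes k(y)$, whence $\mathcal{H}^{q}(L^{\bullet})\otimes k(y)\cong\mathcal{H}^{q}(L^{\bullet}\otimes k(y))=H^{q}(X_{y},\mathcal{F}_{y})$; identifying $(R^{q}f_{*}\mathcal{F})_{y}/\mathcal{I}_{y}(R^{q}f_{*}\mathcal{F})_{y}$ with $\mathcal{H}^{q}(L^{\bullet})\otimes k(y)$ yields the bijectivity in (3).

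The main obstacle is the construction of the universal complex $L^{\bullet}$: this is exactly where properness, the coherence of the higher direct images, and the $Y$-flatness of $\mathcal{F}$ all enter, and it is the genuinely analytic heart of the argument, carried out in [3,4]. Everything after it, as sketched above, is only the lower semicontinuity of matrix rank together with the constant-rank theorem for bundle maps over a reduced base.
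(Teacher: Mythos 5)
Your proposal is correct, but note the paper itself gives no proof of this statement: it is quoted verbatim as a known result with a citation to [3, 4] (B\u{a}nic\u{a}--St\u{a}n\u{a}\c{s}il\u{a} and Barth--Hulek--Peters--Van de Ven). Your argument is precisely the standard one carried out in those references --- replace $Rf_{*}\mathcal{F}$ locally on $Y$ by a bounded complex of finite-rank free modules computing cohomology universally (the genuinely analytic step, where properness and flatness enter), then deduce (1) from lower semicontinuity of matrix rank and (2)--(3) from the constant-rank theorem over a \emph{reduced} base, which you correctly flag as the hypothesis making fiberwise-constant rank imply local freeness --- so there is nothing to compare beyond observing that your sketch faithfully matches the cited source's route.
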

In Theorem 4, on $f^{-1}(y)$, we consider the inclusion map 
$i_{y}:f^{-1}(y)\to X$ and we define $\mathcal{F}_{y}:=i_{y}^{*}\mathcal{F}$.

In this paper, we are in particular interested in the case $K3\#3\overline{\mathbb{CP}_{2}}$.
Let $Z$ be the twistor space of $K3$ with a Ricci-flat K\"ahler metric  and let $\overline{\mathbb{CP}_{2}}$ be $\mathbb{CP}_{2}$
with the non-standard orientation and $g_{FS}$ be the Fubini-Study metric. 
Let $F_{i}$ be the twistor space of $(\overline{\mathbb{CP}_{2}}, g_{FS})$. 
Then take a twistor line $l_{i}$, $i=1, 2, 3$ on $Z$ and by blowing up $l_{i}$, 
we get $\tilde{Z}$ with an exceptional divisor $Q'_{i}$ for $i=1, 2, 3$, which is $\mathbb{CP}_{1}\times\mathbb{CP}_{1}$. 
 The normal bundle of $Q'_{i}$ in $\tilde{Z}$ is $\mathcal{O}(1, -1)$.  
 In this paper, $\mathcal{O}(a)$ means some power of tautological line bundle on $\mathbb{CP}_{n}$
 or the sheaf of sections of it according to the context. 
Let $\pi_{1}$ be the projection of $\mathbb{CP}_{1}\times\mathbb{CP}_{1}$ to the first factor
 and $\pi_{2}$ the second factor. 
By $\mathcal{O}(a, b)$,  we mean $\pi_{1}^{*}\mathcal{O}(a)\otimes \pi_{2}\mathcal{O}(b)$. 

Similarly, by blowing up a twistor line, we get $(\tilde{F}_{i}, Q''_{i})$ such that the normal bundle of $Q''_{i}$ in $\tilde{F}_{i}$ is $\mathcal{O}(1, -1)$. 
We identify $\tilde{Z}$ with $\tilde{F}_{i}$ by identifying $Q'_{i}\subset \tilde{Z}$ and $Q''_{i}\subset \tilde{F}_{i}$ by switching each factor in the $Q'_{i}$ and $Q''_{i}$
and denote the identification of $Q'_{i}$ and $Q''_{i}$ by $Q_{i}$.
Let $\tilde{\mathscr{F}}=\tilde{F}_{1}\cup \tilde{F}_{2}\cup \tilde{F}_{3}$ and $\mathscr{Q}=Q_{1}\cup Q_{2}\cup Q_{3}$. 
Then we get a singular space $Z_{0}=\tilde{Z}\cup _{\mathscr{Q}}\tilde{\mathscr{F}}$ with normal crossing divisor $\mathscr{Q}$.

 In this paper, we consider the following type of deformation, as suggested by LeBrun in [19]. 
A 1-parameter family of standard deformation of a singular complex space $Z_{0}$
is a flat, proper, holomorphic map $\varpi:\mathcal{Z}\to\mathcal{U}$ with an anti-holomorphic involution $\sigma:\mathcal{Z}\to\mathcal{Z}$ such that 
$\sigma|_{Z_{0}}=\sigma_{0}$ and $\mathcal{U}\subset\mathbb{C}$ is an open neighborhood of $0$. 
$\mathcal{Z}$ is a complex 4-manifold and when $u\in \mathcal{U}$ is non-zero real, $Z_{\mathcal{U}}=\varpi^{-1}(u)$ is a twistor space.
For a precise definition, we refer to [19]. 
By the construction of anti-self-dual metrics in [6], there exists a standard deformation of $Z_{0}$.
We denote this standard deformation by 
\[\varpi:\mathcal{Z}\to \mathcal{U},\]
with fiber $Z_{t}$ which is a smoothing of a singular twistor space $Z_{0}$ and 
$\mathcal{U}\in\mathbb{C}$ is a neighborhood of the origin. 

Let $K_{\mathcal{Z}}$ be the canonical bundle of $\mathcal{Z}$ and 
let $\mathcal{I}_{\tilde{\mathscr{F}}}$ be the ideal sheaf of $\tilde{\mathscr{F}}\in \mathcal{Z}$.
Then we consider the invertible sheaf $\mathcal{O}_{\mathcal{Z}}(K_{\mathcal{Z}})\otimes2\mathcal{I}_{\tilde{\mathscr{F}}}$. 
We use $K$ instead of  $K_{\mathcal{Z}}$. 
We apply Leray spectral sequence to this map $\varpi$.

Since $\mathcal{O}_{\mathcal{Z}}(K)\otimes2\mathcal{I}_{\tilde{\mathscr{F}}}$ is an invertible sheaf, it is coherent and 
$\mathcal{Z}$ can be covered by open sets such that  $\mathcal{O}_{\mathcal{Z}}(K)\otimes2\mathcal{I}_{\tilde{\mathscr{F}}}|_{U}$ is a 
free $\mathcal{O}_{\mathcal{Z}}|_{U}$-module.
Since $\varpi:\mathcal{Z}\to \mathcal{U}$ is a flat, proper morphism,  $\mathcal{O}_{\mathcal{Z}}(K)\otimes2\mathcal{I}_{\tilde{\mathscr{F}}}$ is flat over all $t\in\mathcal{U}$. 
Thus, by the theorem 4, we have 
$h^{1}(Z_{t},(\mathcal{O}_{\mathcal{Z}}(K)\otimes2\mathcal{I}_{\tilde{\mathscr{F}}})_{t})$ is an upper semi-continuous function of $t\in\mathcal{U}$
and if $h^{1}(Z_{t}, (\mathcal{O}_{\mathcal{Z}}(K)\otimes2\mathcal{I}_{\tilde{\mathscr{F}}})_{t})$ is constant, 
then $R^{1}\varpi_{*}(\mathcal{O}_{\mathcal{Z}}(K)\otimes2\mathcal{I}_{\tilde{\mathscr{F}}})$ is locally free. 

In Leray spectral sequence, we have 
\[E^{p,q}_{2}=H^{p}(\mathcal{U}, R^{q}\varpi_{*}(\mathcal{O}_{\mathcal{Z}}(K)\otimes2\mathcal{I}_{\tilde{\mathscr{F}}})),\]
and
\[d_{2}: E_{2}^{p,q}\to E_{2}^{p+2, q-1}.\]
By Cartan's Theorem B, $H^{i}(U, R^{q}\varpi_{*}(\mathcal{O}_{\mathcal{Z}}(K)\otimes2\mathcal{I}_{\tilde{\mathscr{F}}})=0$ for $i>0$.
Thus, we get $d_{2}=0$ for all $q$. Therefore, Leray spectral sequence degenerates at $E_{2}$-level. 

Then, we have 
\[E_{2}^{p,q}=E_{\infty}^{p,q}=Gr_{p}H^{p+q}(\mathcal{Z}, \mathcal{O}_{\mathcal{Z}}(K)\otimes2\mathcal{I}_{\tilde{\mathscr{F}}}),\]
where $Gr_{p}$ is the $p$-th filtration of $H^{p+q}(\mathcal{Z}, \mathcal{O}_{\mathcal{Z}}(K)\otimes2\mathcal{I}_{\tilde{\mathscr{F}}})$.
In particular, for $p=0, q=1$, we get
\[E_{2}^{0,1}=Gr_{0}H^{1}(\mathcal{Z}, \mathcal{O}_{\mathcal{Z}}(K)\otimes2\mathcal{I}_{\tilde{\mathscr{F}}})
=H^{1}(\mathcal{Z}, \mathcal{O}_{\mathcal{Z}}(K)\otimes2\mathcal{I}_{\tilde{\mathscr{F}}}).\]
Thus, we get 
\[H^{0}(\mathcal{U}, R^{1}\varpi_{*}(\mathcal{O}_{\mathcal{Z}}(K)\otimes2\mathcal{I}_{\tilde{\mathscr{F}}}))
=H^{1}(\mathcal{Z}, \mathcal{O}_{\mathcal{Z}}(K)\otimes2\mathcal{I}_{\tilde{\mathscr{F}}}).\]
By Theorem 4, if $h^{1}(Z_{t}, (\mathcal{O}_{\mathcal{Z}}(K)\otimes2\mathcal{I}_{\tilde{\mathscr{F}}})_{t})$ is constant, then 
the following restriction map
\[r_{t}:H^{1}(\mathcal{Z}, \mathcal{O}_{\mathcal{Z}}(K)\otimes2\mathcal{I}_{\tilde{\mathscr{F}}})\to
 H^{1}(Z_{t}, (\mathcal{O}_{\mathcal{Z}}(K)\otimes2\mathcal{I}_{\tilde{\mathscr{F}}})_{t})\]
is surjective. 

\vspace{20pt}

Let $\omega $ be a self-dual harmonic 2-form on a smooth, oriented, compact Riemannian manifold. 
Then 
\[0=\int_{M}<dd^{*}+d^{*}d\omega, \omega>=\int_{M}<d\omega, d\omega>+\int_{M}<d^{*}\omega, d^{*}\omega>.\]
Thus, $d\omega=0$. 
Therefore, $H^{1}(Z_{t}, \mathcal{O}_{Z_{t}}(K_{t}))$ corresponds to the space of self-dual harmonic 2-forms on $K3\#3\overline{\mathbb{CP}_{2}}$, 
which has dimension 3.

\begin{Theorem}
Let $\varpi:\mathcal{Z}\to \mathcal{U} $, $\mathcal{U}\subset \mathbb{C}$, be a 1-parameter family of standard deformation of $Z_{0}$ such that each fiber $Z_{t}$
correspond to the twistor space of $(K3\#3\overline{\mathbb{CP}_{2}}, g_{t})$ constructed in [6]. 
Then for $t\neq 0$, $h^{1}(Z_{t}, \mathcal{O}_{Z_{t}}(K_{t}))=3$.
If $h^{1}(Z_{0}, (\mathcal{O}_{\mathcal{Z}}(K)\otimes2\mathcal{I}_{\tilde{\mathscr{F}}}|_{0})=3$, 
then a given real element  $H^{1}(Z_{0}, (\mathcal{O}_{\mathcal{Z}}(K)\otimes2\mathcal{I}_{\tilde{\mathscr{F}}})|_{0})$
can be extended nearby fiber so that we get a real element  in $H^{1}(Z_{t}, \mathcal{O}_{Z_{t}}(K_{t}))$ for $t\neq 0$.
\end{Theorem}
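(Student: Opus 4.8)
The plan is to assemble the cohomology-and-base-change machinery already set up for $\varpi$ together with the real structure $\sigma$, in three steps: identify the fiberwise restriction of the twisted canonical sheaf, force the fiberwise $h^{1}$ to be constant, and then produce a \emph{real} extension by averaging. Write $\mathcal{F}:=\mathcal{O}_{\mathcal{Z}}(K)\otimes2\mathcal{I}_{\tilde{\mathscr{F}}}$ and $\mathcal{F}_{t}:=i_{t}^{*}\mathcal{F}$ as in the convention following Theorem 4.

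First I would identify $\mathcal{F}$ on each fiber. For $t\neq0$ the fiber $Z_{t}=\varpi^{-1}(t)$ is disjoint from the central fiber $Z_{0}=\tilde{Z}\cup_{\mathscr{Q}}\tilde{\mathscr{F}}$, hence disjoint from the divisor $\tilde{\mathscr{F}}$; therefore $\mathcal{I}_{\tilde{\mathscr{F}}}|_{Z_{t}}=\mathcal{O}_{Z_{t}}$, independently of the power. Since $Z_{t}$ is a fiber of the map $\varpi$ to a curve, its normal bundle in $\mathcal{Z}$ is trivial, so adjunction gives $K_{\mathcal{Z}}|_{Z_{t}}=K_{Z_{t}}$. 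Combining these,
\[\mathcal{F}_{t}\;\cong\;\mathcal{O}_{Z_{t}}(K_{t}),\qquad t\neq0.\]
By Theorem 3 and the Weitzenb\"ock computation above, $H^{1}(Z_{t},\mathcal{O}_{Z_{t}}(K_{t}))$ is the space of self-dual harmonic $2$-forms on $K3\#3\overline{\mathbb{CP}_{2}}$, of dimension $b_{+}=3$; this yields $h^{1}(Z_{t},\mathcal{O}_{Z_{t}}(K_{t}))=3$ for $t\neq0$.

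Next I would force constancy. By Theorem 4(1) the function $t\mapsto h^{1}(Z_{t},\mathcal{F}_{t})$ is upper semi-continuous; it equals $3$ on $\mathcal{U}\setminus\{0\}$ by the previous step and equals $3$ at $t=0$ by hypothesis, so it is identically $3$. Then Theorem 4(2) makes $R^{1}\varpi_{*}\mathcal{F}$ locally free, and the base-change bijection of Theorem 4(3), combined with the Leray degeneration and the identification $H^{0}(\mathcal{U},R^{1}\varpi_{*}\mathcal{F})=H^{1}(\mathcal{Z},\mathcal{F})$ established above, shows that the restriction
\[r_{t}:H^{1}(\mathcal{Z},\mathcal{F})\to H^{1}(Z_{t},\mathcal{F}_{t})\]
is surjective for every $t\in\mathcal{U}$, in particular for $t=0$ and for $t\neq0$. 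Now, given a real class $\alpha_{0}\in H^{1}(Z_{0},\mathcal{F}_{0})$, surjectivity of $r_{0}$ produces a lift $\tilde{\alpha}\in H^{1}(\mathcal{Z},\mathcal{F})$. Since $\sigma$ preserves $\tilde{\mathscr{F}}$ and the canonical bundle, it induces a conjugate-linear involution $\sigma^{*}$ on $H^{1}(\mathcal{Z},\mathcal{F})$ commuting with restriction, $r_{0}\circ\sigma^{*}=\sigma_{0}^{*}\circ r_{0}$. Setting $\beta=\tfrac{1}{2}(\tilde{\alpha}+\sigma^{*}\tilde{\alpha})$ gives a real class with $r_{0}(\beta)=\tfrac{1}{2}(\alpha_{0}+\sigma_{0}^{*}\alpha_{0})=\alpha_{0}$. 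For each real $t$, $\sigma$ preserves $Z_{t}$, so $\alpha_{t}:=r_{t}(\beta)$ satisfies $\sigma_{t}^{*}\alpha_{t}=r_{t}(\sigma^{*}\beta)=r_{t}(\beta)=\alpha_{t}$; thus $\alpha_{t}$ is a real element of $H^{1}(Z_{t},\mathcal{F}_{t})=H^{1}(Z_{t},\mathcal{O}_{Z_{t}}(K_{t}))$ for $t\neq0$, as desired.

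I expect the main obstacle to be the reality bookkeeping rather than the cohomological input, which is already in place: one must verify that $\sigma$ genuinely preserves the divisor $\tilde{\mathscr{F}}$ and the twisted sheaf $\mathcal{F}$, that it acts fiberwise over real $t$, and that the induced conjugate-linear involution on cohomology is compatible with every $r_{t}$. Only with these compatibilities does the averaging step survive restriction and deliver a real class on all nearby fibers.
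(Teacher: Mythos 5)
Your proposal is correct and follows essentially the same route as the paper: identify $(\mathcal{O}_{\mathcal{Z}}(K)\otimes2\mathcal{I}_{\tilde{\mathscr{F}}})_{t}\cong\mathcal{O}_{Z_{t}}(K_{t})$ for $t\neq0$ (the paper's Lemma 2), get $h^{1}=3$ on smooth fibers from the twistor correspondence with self-dual harmonic $2$-forms and $b_{+}=3$, and combine upper semicontinuity with Theorem 4 and the Leray degeneration $H^{0}(\mathcal{U},R^{1}\varpi_{*}\mathcal{F})=H^{1}(\mathcal{Z},\mathcal{F})$ to conclude that $r_{t}$ is surjective. Your only addition is the explicit $\sigma$-averaging $\beta=\tfrac{1}{2}(\tilde{\alpha}+\sigma^{*}\tilde{\alpha})$ to produce a real lift, a point the paper leaves implicit; this is a correct and worthwhile clarification, since the Donaldson--Friedman standard deformation is $\sigma$-equivariant and $\sigma$ preserves $\tilde{\mathscr{F}}$ and $K_{\mathcal{Z}}$, exactly as your compatibility checklist requires.
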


\vspace{50pt}

\section{\large\textbf{First Cohomology of the singular fiber and a nondegenerate element}}\label{S:Intro}
In this section, we show that $h^{1}(Z_{0}, (\mathcal{O}_{\mathcal{Z}}(K)\otimes2\mathcal{I}_{\tilde{\mathscr{F}}})_{0})=3$
and there is a nondegenerate element in $H^{1}(Z_{0}, (\mathcal{O}_{\mathcal{Z}}(K)\otimes2\mathcal{I}_{\tilde{\mathscr{F}}})_{0})$. 
Then using Theorem 5, we prove Theorem 1. 

Let $\tilde{\mathscr{F}}=\tilde{F_{1}}\cup\tilde{F_{2}}\cup\tilde{F_{3}}$ and 
$\mathcal{I}_{\tilde{\mathscr{F}}}=\mathcal{I}_{\tilde{F_{1}}}+\mathcal{I}_{\tilde{F_{2}}}+\mathcal{I}_{\tilde{F_{3}}}$.

\newtheorem{Lemma}{Lemma}
\begin{Lemma}
1. $[-\tilde{\mathscr{F}}]|_{\tilde{F}_{i}}=[Q_{i}]$.

2.  $[-\tilde{\mathscr{F}}]|_{\tilde{Z}}=[-\mathscr{Q}]$.

\end{Lemma}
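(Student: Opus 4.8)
The plan is to carry out both computations inside the smooth total space $\mathcal{Z}$, where $\tilde{Z}$, each $\tilde{F}_{i}$, and hence $\tilde{\mathscr{F}}=\tilde{F}_{1}\cup\tilde{F}_{2}\cup\tilde{F}_{3}$ are reduced Cartier divisors (the irreducible components of the normal-crossing central fibre $Z_{0}=\varpi^{-1}(0)$), and $[-\tilde{\mathscr{F}}]$ denotes the invertible sheaf $\mathcal{I}_{\tilde{\mathscr{F}}}=\mathcal{O}_{\mathcal{Z}}(-\tilde{\mathscr{F}})$ as above. The first observation I would record is that the whole central fibre is a principal divisor: since $\mathcal{U}\subset\mathbb{C}$, the coordinate function pulls back to a global holomorphic function $t\circ\varpi$ on $\mathcal{Z}$ whose zero divisor is exactly $Z_{0}$. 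Because $\varpi$ is flat with smooth total space and reduced normal-crossing fibre, we have $Z_{0}=\tilde{Z}+\tilde{F}_{1}+\tilde{F}_{2}+\tilde{F}_{3}$ with each component of multiplicity one, so $\mathcal{O}_{\mathcal{Z}}(Z_{0})\cong\mathcal{O}_{\mathcal{Z}}$ is trivial and therefore
\[ [\tilde{Z}]\otimes[\tilde{\mathscr{F}}]=[Z_{0}]=\mathcal{O}_{\mathcal{Z}},\qquad\text{i.e.}\qquad [-\tilde{\mathscr{F}}]=[\tilde{Z}]\ \text{ on }\mathcal{Z}. \]

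For part 2 I would argue directly, without invoking this relation. Restricting the divisor $\tilde{\mathscr{F}}=\tilde{F}_{1}+\tilde{F}_{2}+\tilde{F}_{3}$ to $\tilde{Z}$, each $\tilde{F}_{i}$ is a component of $Z_{0}$ distinct from $\tilde{Z}$ that meets $\tilde{Z}$ transversally along $Q_{i}$ (this is exactly the normal-crossing condition along $\mathscr{Q}$). Hence the pullback to $\tilde{Z}$ of a local defining equation of $\tilde{F}_{i}$ cuts out precisely the reduced divisor $Q_{i}$, so $[\tilde{F}_{i}]|_{\tilde{Z}}=[Q_{i}]$ and, summing, $[\tilde{\mathscr{F}}]|_{\tilde{Z}}=[Q_{1}+Q_{2}+Q_{3}]=[\mathscr{Q}]$. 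Dualizing gives $[-\tilde{\mathscr{F}}]|_{\tilde{Z}}=[-\mathscr{Q}]$, which is part 2.

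For part 1 the naive restriction fails, and this is the step I expect to be the only real subtlety. Writing $\tilde{\mathscr{F}}=\tilde{F}_{i}+\sum_{j\neq i}\tilde{F}_{j}$, the components $\tilde{F}_{j}$ with $j\neq i$ are disjoint from $\tilde{F}_{i}$ and contribute trivially, but $[\tilde{F}_{i}]|_{\tilde{F}_{i}}$ is the normal bundle of $\tilde{F}_{i}$ in $\mathcal{Z}$, which is not a priori an effective divisor class on $\tilde{F}_{i}$, so one cannot read off $[Q_{i}]$ by restricting equations. I would therefore use the global relation of the first paragraph: restricting $[-\tilde{\mathscr{F}}]=[\tilde{Z}]$ to $\tilde{F}_{i}$ gives $[-\tilde{\mathscr{F}}]|_{\tilde{F}_{i}}=[\tilde{Z}]|_{\tilde{F}_{i}}$, and since $\tilde{Z}$ is a component distinct from $\tilde{F}_{i}$ meeting it transversally along $Q_{i}$, the same transversality argument as in part 2 (now restricting $\tilde{Z}$ to $\tilde{F}_{i}$) yields $[\tilde{Z}]|_{\tilde{F}_{i}}=[Q_{i}]$. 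This proves $[-\tilde{\mathscr{F}}]|_{\tilde{F}_{i}}=[Q_{i}]$. Equivalently, the two computations together force $N_{\tilde{F}_{i}/\mathcal{Z}}=\mathcal{O}_{\tilde{F}_{i}}(-Q_{i})$, which is precisely the triviality of $\mathcal{O}_{\mathcal{Z}}(Z_{0})$ along $\tilde{F}_{i}$ and serves as a consistency check compatible with the prescribed normal bundles of the $Q_{i}$.
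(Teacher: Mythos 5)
Your proof is correct, but it takes a genuinely different route from the paper's. You argue globally: since the central fibre $Z_{0}=\varpi^{-1}(0)$ is the zero divisor of the holomorphic function $t\circ\varpi$ on the smooth total space $\mathcal{Z}$, with each component of multiplicity one, the bundle $[\tilde{Z}]\otimes[\tilde{\mathscr{F}}]=[Z_{0}]$ is trivial, and both parts of the lemma collapse to the transversal restriction computations $[\tilde{\mathscr{F}}]|_{\tilde{Z}}=[\mathscr{Q}]$ and $[\tilde{Z}]|_{\tilde{F}_{i}}=[Q_{i}]$. The paper instead works locally along $Q_{1}$: from the normal bundle sequence $0\to V_{Q_{1},\tilde{F}_{1}}\to V_{Q_{1}}\to V_{\tilde{F}_{1}}|_{Q_{1}}\to 0$ with $V_{Q_{1}}=\mathcal{O}(1,-1)\oplus\mathcal{O}(-1,1)$ it computes $V_{\tilde{F}_{1}}|_{Q_{1}}=\mathcal{O}(1,-1)=[-Q_{1}]|_{Q_{1}}$, and then identifies $V_{\tilde{F}_{1}}$ with $[-Q_{1}]$ globally on the grounds that both bundles are also trivial on $\tilde{F}_{1}-Q_{1}$. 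Your route buys robustness precisely at the point you single out as the subtlety: the paper's final identification tacitly requires that a line bundle on $\tilde{F}_{1}$ trivial off $Q_{1}$ be a power of $[Q_{1}]$, pinned down by its restriction to $Q_{1}$ (true here, but left unstated), and even the paper's asserted triviality of the normal bundle on $\tilde{F}_{i}-Q_{i}$ is itself most naturally justified by your principal-divisor observation; moreover your identity $[-\tilde{\mathscr{F}}]=[\tilde{Z}]$ yields at once the follow-up relations $[\tilde{Z}]|_{\tilde{Z}}=[-\mathscr{Q}]$ and $[\tilde{F}_{i}]|_{\tilde{F}_{i}}=[-Q_{i}]$, which the paper obtains by saying the proof repeats. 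What the paper's computation buys in exchange is the explicit restriction $\mathcal{O}(1,-1)$ of the relevant bundles along $Q_{i}$, which feeds directly into the computations of Lemma 4. The one hypothesis you should justify rather than attribute to flatness alone is multiplicity one: a flat family with smooth total space can have a non-reduced fibre (e.g. $t=x^{2}$), but here $\varpi^{-1}(0)$ is reduced by construction, the Donaldson--Friedman family having local model $xy=t$ near $\mathscr{Q}$ and agreeing with the reduced space $Z_{0}$ elsewhere.
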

\begin{proof}
First, we assume that the first factor of $Q_{i}=\mathbb{CP}_{1}\times\mathbb{CP}_{1}$ is the twistor line and the second factor is the blown up line
so that the normal bundle of $Q_{i}$ in $\tilde{Z}$ is $\mathcal{O}(1,-1)$ and in $\tilde{F}_{i}$ is $\mathcal{O}(-1, 1)$. 
Note that by adjunction formula, $K_{Z_{t}}=K_{\mathcal{Z}}\otimes[Z_{t}]|_{Z_{t}}$. But the normal bundle of $Z_{t}$ for $t\neq 0$ in $\mathcal{Z}$ is trivial.
Thus, $K_{Z_{t}}=K_{\mathcal{Z}}|_{Z{t}}$ for $t\neq 0$. We also note that normal bundle of $\tilde{F}_{i}$ restricted to $\tilde{F}_{i}-Q_{i}$ is trivial. 

We consider the following. 
\[0\longrightarrow V_{Q_{1},\tilde{F_{1}}}\longrightarrow V_{Q_{1}}\longrightarrow V_{\tilde{F_{1}}}|_{Q_{1}}\longrightarrow 0\]
Here $V_{Q_{1},\tilde{F_{1}}}$ is the normal bundle of $Q_{1}$ in $\tilde{F_{1}}$, which is $\mathcal{O}(-1, 1)$
 and $V_{Q_{1}}$ is the normal bundle of $Q_{1}$ in $\mathcal{Z}$, which is $\mathcal{O}(1, -1)\oplus \mathcal{O}(-1, 1)$
and $V_{\tilde{F_{1}}}|_{Q_{1}}$ is the normal bundle of $\tilde{F_{1}}$ in $\mathcal{Z}$ restricted to $Q_{1}$.
From this, we get that $V_{\tilde{F_{1}}}|_{Q_{1}}$ is $\mathcal{O}(1, -1)$.
Note that $Q_{1}$ can be seen as a divisor of $\tilde{F}_{1}$. 
Then, $[-Q_{1}]|_{Q_{1}}=\mathcal{O}(1, -1)$ and $[-Q_{1}]|_{\tilde{F_{1}}-Q}$ is trivial, 
where $[\cdot]$ denotes the line bundle corresponding to a divisor. 
Thus, $[-Q_{1}]$ is the same with the normal bundle of $\tilde{F}_{1}\subset\mathcal{Z}$. 

Similarly, we can show  $[-\tilde{\mathscr{F}}]|_{\tilde{Z}}=[-\mathscr{Q}]$.
We can check the result does not depend on the choice of the factor of $Q_{i}=\mathbb{CP}_{1}\times\mathbb{CP}_{1}$.
\end{proof}

\vspace{20pt}

The same proof with Lemma 1 implies that 
\[[\tilde{Z}]|_{\tilde{Z}}=[-\mathscr{Q}] \hspace{5pt} and \hspace{5pt}  [\tilde{F_{i}}]|_{\tilde{F_{i}}}=[-Q_{i}].\]
Thus, we have
\[K_{\mathcal{Z}}|_{\tilde{Z}}=K_{\tilde{Z}}\otimes [-\tilde{Z}]|_{\tilde{Z}}=K_{\tilde{Z}}\otimes [\mathscr{Q}].\]

\begin{Lemma}

1. $(\mathcal{O}_{\mathcal{Z}}(K_{\mathcal{Z}})\otimes2\mathcal{I}_{\tilde{\mathscr{F}}})_{t}=
(\mathcal{O}_{\mathcal{Z}}(K_{\mathcal{Z}})\otimes2\mathcal{I}_{\tilde{\mathscr{F}}})|_{Z_{t}}\otimes\mathcal{O}_{Z_{t}}=\mathcal{O}_{Z_{t}}(K_{t})$ for $t\neq 0$. 

2.$(\mathcal{O}_{\mathcal{Z}}(K_{\mathcal{Z}})\otimes2\mathcal{I}_{\tilde{\mathscr{F}}})|_{\tilde{Z}}\otimes\mathcal{O}_{\tilde{Z}}
=\mathcal{O}_{\tilde{Z}}(K_{\tilde{Z}}\otimes [-\mathscr{Q}])$

3.$(\mathcal{O}_{\mathcal{Z}}(K_{\mathcal{Z}})\otimes2\mathcal{I}_{\tilde{\mathscr{F}}})|_{\tilde{F}_{i}}\otimes\mathcal{O}_{\tilde{F}_{i}}
=\mathcal{O}_{\tilde{F_{i}}}(K_{\tilde{F}_{i}}\otimes 3[Q_{i}])$

\end{Lemma}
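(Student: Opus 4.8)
The plan is to treat each of the three claims as a computation with invertible sheaves, using that $2\mathcal{I}_{\tilde{\mathscr{F}}}$ is the line bundle $\mathcal{O}_{\mathcal{Z}}(-2\tilde{\mathscr{F}})$ attached to the reduced Cartier divisor $\tilde{\mathscr{F}}=\tilde{F}_{1}+\tilde{F}_{2}+\tilde{F}_{3}$, and that pullback $i^{*}$ of invertible sheaves along an inclusion respects tensor products. Thus in each case the restriction splits as the tensor product of $K_{\mathcal{Z}}$ restricted to the relevant divisor with $\mathcal{O}_{\mathcal{Z}}(-2\tilde{\mathscr{F}})$ restricted to it, and the task is to evaluate the two factors separately by combining the adjunction formula with Lemma 1. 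Since $\mathcal{Z}$ is a smooth $4$-fold and $Z_{0}$ has normal crossings, each of $Z_{t}$ (for $t\neq 0$), $\tilde{Z}$, and $\tilde{F}_{i}$ is a smooth Cartier divisor in $\mathcal{Z}$, so adjunction applies throughout even though $Z_{0}$ itself is singular.

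For (1) I would first observe that for $t\neq 0$ the fiber $Z_{t}$ is disjoint from $\tilde{\mathscr{F}}\subset Z_{0}$, so $\mathcal{O}_{\mathcal{Z}}(-2\tilde{\mathscr{F}})|_{Z_{t}}$ is trivial and the restriction collapses to $K_{\mathcal{Z}}|_{Z_{t}}$; this equals $K_{Z_{t}}$ by the adjunction identity $K_{Z_{t}}=K_{\mathcal{Z}}\otimes[Z_{t}]|_{Z_{t}}$ already recorded in the proof of Lemma 1, together with the triviality of the normal bundle of $Z_{t}$ for $t\neq0$. For (2), adjunction on $\tilde{Z}$ gives $K_{\mathcal{Z}}|_{\tilde{Z}}=K_{\tilde{Z}}\otimes[-\tilde{Z}]|_{\tilde{Z}}$, and since the total fiber class $[\tilde{Z}]+[\tilde{\mathscr{F}}]$ has trivial normal bundle one has $[\tilde{Z}]|_{\tilde{Z}}=[-\mathscr{Q}]$, whence $K_{\mathcal{Z}}|_{\tilde{Z}}=K_{\tilde{Z}}\otimes[\mathscr{Q}]$. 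The second factor is $\mathcal{O}_{\mathcal{Z}}(-2\tilde{\mathscr{F}})|_{\tilde{Z}}=2\,[-\tilde{\mathscr{F}}]|_{\tilde{Z}}=[-2\mathscr{Q}]$ by part 2 of Lemma 1, and the product is $K_{\tilde{Z}}\otimes[-\mathscr{Q}]$, as claimed.

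For (3) the same scheme runs on $\tilde{F}_{i}$: adjunction gives $K_{\mathcal{Z}}|_{\tilde{F}_{i}}=K_{\tilde{F}_{i}}\otimes[-\tilde{F}_{i}]|_{\tilde{F}_{i}}$, and here one uses that the three components $\tilde{F}_{1},\tilde{F}_{2},\tilde{F}_{3}$ meet $\tilde{Z}$ along the distinct $Q_{i}$ but are pairwise disjoint, so that $[\tilde{\mathscr{F}}]|_{\tilde{F}_{i}}=[\tilde{F}_{i}]|_{\tilde{F}_{i}}$; combined with part 1 of Lemma 1 this gives $[-\tilde{F}_{i}]|_{\tilde{F}_{i}}=[Q_{i}]$ and hence $K_{\mathcal{Z}}|_{\tilde{F}_{i}}=K_{\tilde{F}_{i}}\otimes[Q_{i}]$. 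Since $\mathcal{O}_{\mathcal{Z}}(-2\tilde{\mathscr{F}})|_{\tilde{F}_{i}}=2\,[-\tilde{\mathscr{F}}]|_{\tilde{F}_{i}}=2[Q_{i}]$, again by Lemma 1, the product is $K_{\tilde{F}_{i}}\otimes 3[Q_{i}]$. The whole lemma is essentially a corollary of Lemma 1 and adjunction, and the only place where the bookkeeping could go wrong—the step I would watch most carefully—is the disjointness reduction of $[\tilde{\mathscr{F}}]|_{\tilde{F}_{i}}$ to $[\tilde{F}_{i}]|_{\tilde{F}_{i}}$ together with keeping the factor of $2$ from the squared ideal sheaf straight; everything else is a mechanical assembly.
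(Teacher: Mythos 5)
Your proposal is correct and follows essentially the same route as the paper: the paper treats Lemma 2 as an immediate consequence of adjunction, the triviality of the normal bundle of $Z_{t}$ for $t\neq 0$ (both recorded in the proof of Lemma 1), and the identities $[-\tilde{\mathscr{F}}]|_{\tilde{Z}}=[-\mathscr{Q}]$, $[-\tilde{\mathscr{F}}]|_{\tilde{F}_{i}}=[Q_{i}]$, $[\tilde{Z}]|_{\tilde{Z}}=[-\mathscr{Q}]$, and $[\tilde{F}_{i}]|_{\tilde{F}_{i}}=[-Q_{i}]$ stated just before the lemma, which is exactly your factor-by-factor computation. The only cosmetic difference is that you obtain the self-restriction formulas from the triviality of the fiber class $[\tilde{Z}]+[\tilde{\mathscr{F}}]=[Z_{0}]$ together with the pairwise disjointness of the $\tilde{F}_{j}$, whereas the paper gets them by noting that ``the same proof with Lemma 1'' (the normal-bundle exact sequence) applies.
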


\begin{Lemma}
Let $\pi:\tilde{S}\to S$ be the blowing up along a submanifold $W$ with codimension $k+1$ and let $\mathcal{I}$ be the ideal sheaf of $\tilde{W}$. 
Then we have $\mathcal{O}_{\tilde{S}}(\pi^{*}K_{S})=\mathcal{O}_{\tilde{S}}(K_{\tilde{S}})\otimes\mathcal{I}^{k}_{\tilde{W}}$.
\end{Lemma}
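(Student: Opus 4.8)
The plan is to establish the classical discrepancy formula $K_{\tilde{S}} = \pi^{*}K_{S} + k\,\tilde{W}$ for blowing up a smooth center, and then to translate it into the sheaf-theoretic language of the statement. First I would observe that $\pi$ restricts to a biholomorphism $\tilde{S}\setminus\tilde{W}\to S\setminus W$, since a blow-up is an isomorphism away from its center. Hence the line bundles $\mathcal{O}_{\tilde{S}}(K_{\tilde{S}})$ and $\mathcal{O}_{\tilde{S}}(\pi^{*}K_{S})$ agree over $\tilde{S}\setminus\tilde{W}$, so they differ by a divisor supported on the exceptional divisor $\tilde{W}$. Because $W$ is smooth, $\tilde{W}$ is a smooth irreducible Cartier divisor, and I may therefore write $K_{\tilde{S}} = \pi^{*}K_{S} + m\,\tilde{W}$ for a single integer $m$ to be determined.

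To pin down $m$ I would pass to the local model. Near a point of $W$ I choose coordinates $(z_{1},\dots,z_{n})$ on $S$ so that $W=\{z_{1}=\cdots=z_{k+1}=0\}$, whence $\tilde{S}$ is locally the product of $\mathrm{Bl}_{0}\mathbb{C}^{k+1}$ with the $\mathbb{C}^{n-k-1}$ spanned by $z_{k+2},\dots,z_{n}$. In the affine chart of $\mathrm{Bl}_{0}\mathbb{C}^{k+1}$ where the $i$-th homogeneous coordinate is nonzero, the map $\pi$ reads $z_{i}=u_{i}$ and $z_{j}=u_{i}u_{j}$ for the remaining $k$ indices $j\le k+1$, with exceptional divisor $\{u_{i}=0\}$. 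A direct wedge computation then gives $\pi^{*}(dz_{1}\wedge\cdots\wedge dz_{n})=\pm\,u_{i}^{k}\,du_{1}\wedge\cdots\wedge du_{n}$, since each of the $k$ differentials $dz_{j}=u_{j}\,du_{i}+u_{i}\,du_{j}$ contributes exactly one factor of $u_{i}$ once the common $du_{i}$ is wedged out, while the remaining coordinates $z_{k+2},\dots,z_{n}$ are untouched. Thus the pulled-back holomorphic volume form vanishes to order exactly $k$ along $\tilde{W}$, so $m=k$ and $K_{\tilde{S}}=\pi^{*}K_{S}+k\,\tilde{W}$.

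Finally I would rewrite this additive identity multiplicatively. The ideal sheaf of the Cartier divisor $\tilde{W}$ satisfies $\mathcal{I}_{\tilde{W}}=\mathcal{O}_{\tilde{S}}(-\tilde{W})$, hence $\mathcal{I}_{\tilde{W}}^{k}=\mathcal{O}_{\tilde{S}}(-k\tilde{W})$. Rearranging the discrepancy formula as $\pi^{*}K_{S}=K_{\tilde{S}}-k\,\tilde{W}$ and passing to line bundles yields $\mathcal{O}_{\tilde{S}}(\pi^{*}K_{S})=\mathcal{O}_{\tilde{S}}(K_{\tilde{S}})\otimes\mathcal{O}_{\tilde{S}}(-k\tilde{W})=\mathcal{O}_{\tilde{S}}(K_{\tilde{S}})\otimes\mathcal{I}_{\tilde{W}}^{k}$, which is exactly the asserted identity.

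The main obstacle here is bookkeeping rather than anything conceptual. I expect the delicate point to be the codimension convention: a center of codimension $k+1$ produces discrepancy $k$, not $k+1$, and this hinges on correctly counting that there are precisely $k$ differentials $dz_{j}$ each producing one power of $u_{i}$. I would also want to check that the computed order of vanishing is independent of the chosen affine chart, which follows since the $u_{i}^{k}$ factor is intrinsic to the exceptional divisor and the transition functions between charts are nonvanishing along $\tilde{W}$.
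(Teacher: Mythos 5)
Your proof is correct. Note that the paper offers no proof of this lemma at all --- it is stated as a classical fact (the discrepancy formula $K_{\tilde{S}}=\pi^{*}K_{S}+k\,\tilde{W}$ for blowing up a smooth center of codimension $k+1$) and is immediately applied in Remark 2 with $k=1$ --- so there is nothing in the paper to diverge from, and your local Jacobian computation is exactly the standard argument implicitly being cited: the count that each of the $k$ differentials $dz_{j}=u_{j}\,du_{i}+u_{i}\,du_{j}$ contributes one factor of $u_{i}$ after wedging out $du_{i}$ is right, as is the translation $\mathcal{I}_{\tilde{W}}^{k}=\mathcal{O}_{\tilde{S}}(-k\tilde{W})$ into the sheaf form of the statement. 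One small refinement: in the paper's application the center $W$ is a disjoint union of twistor lines, so $\tilde{W}$ is not irreducible and your single integer $m$ should a priori be a coefficient per component; since your computation is local, it gives coefficient $k$ on each component simultaneously, so the argument goes through unchanged, but the irreducibility claim should be dropped or stated componentwise.
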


\begin{Remark}
From Lemma 3, we get 
\[\mathcal{O}_{\tilde{Z}}(K_{\tilde{Z}}\otimes [-\mathscr{Q}])=\mathcal{O}_{\tilde{Z}}(\pi^{*}K_{Z}).\]

\end{Remark}

\begin{Lemma}
$\mathcal{O}_{\tilde{Z}}(K_{\tilde{Z}}\otimes [-\mathscr{Q}])$ is non-trivial along the twistor line direction and trivial along the blown up direction 
of $Q_{i}=\mathbb{CP}_{1}\times\mathbb{CP}_{1}$. 
On the other hand, $\mathcal{O}_{\tilde{F}_{i}}(K_{\tilde{F_{i}}}\otimes [3Q_{i}])$ is non-trivial along the blown up direction
and trivial along the twistor line direction.

\end{Lemma}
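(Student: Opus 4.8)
The plan is to reduce both assertions to a single bidegree computation on $Q_{i}=\mathbb{CP}_{1}\times\mathbb{CP}_{1}$ and then translate that bidegree into the language of the two rulings, keeping careful track of the fact that the Donaldson--Friedman gluing identifies $Q'_{i}\subset\tilde{Z}$ with $Q''_{i}\subset\tilde{F}_{i}$ by \emph{interchanging} the two factors. I would fix the convention once at the outset: in the $Q_{i}$-coordinates inherited from $\tilde{Z}$, the first factor is the twistor-line direction of $Z$ and the second is the blown-up direction of $\tilde{Z}$, so that $N_{Q_{i}/\tilde{Z}}=\mathcal{O}(1,-1)$ and (because of the switch) $N_{Q_{i}/\tilde{F}_{i}}=\mathcal{O}(-1,1)$, exactly as recorded in Section~3.

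First I would treat $\tilde{Z}$. By Remark~2, equivalently Lemma~3 applied to the blow-up $\pi:\tilde{Z}\to Z$, one has $\mathcal{O}_{\tilde{Z}}(K_{\tilde{Z}}\otimes[-\mathscr{Q}])=\mathcal{O}_{\tilde{Z}}(\pi^{*}K_{Z})$. Restricting to $Q_{i}$, the map $\pi|_{Q_{i}}$ is the projection $\mathbb{P}(N_{l_{i}/Z})\to l_{i}$, which in our coordinates is projection onto the first (twistor-line) factor. Since $Z$ is a twistor space, $K_{Z}=\mathcal{O}(-4)$, and a twistor line $l_{i}$ satisfies $K_{Z}|_{l_{i}}=\mathcal{O}_{\mathbb{CP}_{1}}(-4)$. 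Hence $\pi^{*}K_{Z}|_{Q_{i}}=\pi_{1}^{*}\mathcal{O}(-4)=\mathcal{O}(-4,0)$, which is non-trivial along the first factor (twistor-line direction) and trivial along the second factor (blown-up direction). This is the first assertion.

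Next I would treat $\tilde{F}_{i}$. By Lemma~3 for the blow-up $\pi:\tilde{F}_{i}\to F_{i}$ of a twistor line, $K_{\tilde{F}_{i}}=\pi^{*}K_{F_{i}}\otimes[Q_{i}]$, so $K_{\tilde{F}_{i}}\otimes[3Q_{i}]=\pi^{*}K_{F_{i}}\otimes[4Q_{i}]$. I then restrict each factor to $Q_{i}$. Because the gluing interchanges the two rulings, in the fixed $Q_{i}$-coordinates the twistor line of $F_{i}$ is the \emph{second} factor, so $\pi|_{Q_{i}}$ projects onto the second factor; combined with $K_{F_{i}}|_{\text{line}}=\mathcal{O}_{\mathbb{CP}_{1}}(-4)$ this gives $\pi^{*}K_{F_{i}}|_{Q_{i}}=\mathcal{O}(0,-4)$. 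Using $[Q_{i}]|_{Q_{i}}=N_{Q_{i}/\tilde{F}_{i}}=\mathcal{O}(-1,1)$ yields $[4Q_{i}]|_{Q_{i}}=\mathcal{O}(-4,4)$, and therefore
\[
(K_{\tilde{F}_{i}}\otimes[3Q_{i}])|_{Q_{i}}=\mathcal{O}(0,-4)\otimes\mathcal{O}(-4,4)=\mathcal{O}(-4,0).
\]
Since here the first factor is the blown-up direction of $\tilde{F}_{i}$ and the second is the twistor-line direction, this bundle is non-trivial along the blown-up direction and trivial along the twistor-line direction, giving the second assertion.

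The only genuinely delicate point — and the reason the statement looks asymmetric — is the bookkeeping of the factor interchange. Both restrictions come out numerically equal to $\mathcal{O}(-4,0)$ on $Q_{i}$, as they must, since they are the restrictions of the single sheaf $\mathcal{O}_{\mathcal{Z}}(K_{\mathcal{Z}})\otimes2\mathcal{I}_{\tilde{\mathscr{F}}}$ (computed in Lemma~3) to the two components meeting along $Q_{i}$; nevertheless the conclusion flips between the two sides precisely because ``first factor'' names the twistor-line direction on $\tilde{Z}$ but the blown-up direction on $\tilde{F}_{i}$. I expect the main work to lie in stating this switching convention unambiguously, after which the two computations run in exact parallel and the apparent asymmetry is seen to be encoded entirely in the gluing map. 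This orthogonality of the non-trivial directions on the two sides is what I anticipate will drive the $h^{1}$-computation of the singular fiber in the next step.
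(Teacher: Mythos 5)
Your proof is correct, but it takes a genuinely different computational route from the paper's. The paper works intrinsically on the quadric: it applies adjunction on $Q_{i}$, namely $K_{\tilde{Z}}|_{Q_{i}}=K_{Q_{i}}\otimes[-Q_{i}]|_{Q_{i}}=\mathcal{O}(-2,-2)\otimes\mathcal{O}(-1,1)=\mathcal{O}(-3,-1)$, then twists by $[-\mathscr{Q}]|_{Q_{i}}=\mathcal{O}(-1,1)$ to land on $\mathcal{O}(-4,0)$, and repeats the identical adjunction computation on the $\tilde{F}_{i}$ side with the factors switched, $\mathcal{O}(-1,-3)\otimes\mathcal{O}(-3,3)=\mathcal{O}(-4,0)$; the only input beyond $K_{Q_{i}}=\mathcal{O}(-2,-2)$ is the pair of normal bundles. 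You instead reduce both sides to pullbacks via the blow-up canonical-bundle formula: on the $\tilde{Z}$ side, $K_{\tilde{Z}}\otimes[-\mathscr{Q}]=\pi^{*}K_{Z}$ (Remark 2) restricts through $\pi|_{Q_{i}}=\pi_{1}$ to $\pi_{1}^{*}\bigl(K_{Z}|_{l_{i}}\bigr)=\mathcal{O}(-4,0)$, using the twistor-theoretic fact $K_{Z}|_{l}=\mathcal{O}(-4)$, which the paper does not need for this lemma; on the $\tilde{F}_{i}$ side you write $K_{\tilde{F}_{i}}\otimes3[Q_{i}]=\pi^{*}K_{F_{i}}\otimes4[Q_{i}]$ and restrict the two factors separately. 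What your route buys is conceptual transparency: triviality along the blown-up direction on the $\tilde{Z}$ side is automatic, since the sheaf is literally pulled back along that ruling, rather than emerging from bidegree arithmetic, and the apparent asymmetry of the statement is visibly forced by the factor interchange in the gluing; what the paper's route buys is uniformity, since one adjunction formula handles both components with no appeal to $K|_{l}=\mathcal{O}(-4)$. Your closing consistency check—that the two restrictions must agree on $Q_{i}$ because both arise from the single invertible sheaf $\mathcal{O}_{\mathcal{Z}}(K)\otimes2\mathcal{I}_{\tilde{\mathscr{F}}}$—is exactly the compatibility the paper invokes later in the Mayer--Vietoris argument of Proposition 3; note only that those component restrictions are recorded in Lemma 2, not Lemma 3 (Lemma 3 is the blow-up formula you use), a harmless citation slip in an otherwise sound argument.
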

\begin{proof}
Suppose we have chosen the factor of $Q_{i}$ so that the normal bundle of $Q_{i}$ in $\tilde{Z}$ is $\mathcal{O}(1, -1)$
and in $\tilde{F}_{i}$ is $\mathcal{O}(-1,1)$. Then the first factor of $Q_{i}$ is the twistor line direction in $\tilde{Z}$
and the blown up direction in $\tilde{F}_{i}$. 

Using this, we get
\[K_{\tilde{Z}}|_{Q_{i}}=K_{Q_{i}}\otimes[-Q_{i}]|_{Q_{i}}=\mathcal{O}(-2, -2)\otimes\mathcal{O}(-1, 1)=\mathcal{O}(-3, -1),\]
\[K_{\tilde{Z}}\otimes[-\mathscr{Q}]|_{Q_{i}}=K_{\tilde{Z}}|_{Q_{i}}\otimes [-\mathscr{Q]}|_{Q_{i}}
=\mathcal{O}(-3, -1)\otimes\mathcal{O}(-1, 1)=\mathcal{O}(-4, 0).\]

Similarly, we have 
\[K_{\tilde{F}_{i}}|_{Q_{i}}=K_{Q_{i}}\otimes[-Q_{i}]|_{Q_{i}}=\mathcal{O}(-2, -2)\otimes\mathcal{O}(1, -1)=\mathcal{O}(-1, -3).\]
\[K_{\tilde{F}_{i}}\otimes3[Q_{i}]|_{Q_{i}}=\mathcal{O}(-1, -3)\otimes\mathcal{O}(-3, 3)=\mathcal{O}(-4, 0).\]

\vspace{20pt}

\end{proof}
Below, we state K\"unneth formula in oder to calculate the cohomology $H^{i}(\mathbb{CP}_{1}\times\mathbb{CP}_{1}, \mathcal{O}(a, b))$. 
\begin{Theorem}
[26] Let $\mathcal{F}, \mathcal{G}$ be cohernet sheaves on $X$ and $Y$ respectively, which are projective varieties over a filed $k$.
Let $\pi_{1}$ is the projection map from $X\times Y$ to $X$ and Similarly, $\pi_{2}$ to $Y$.  
Then the following holds.
\[H^{m}(X\times Y, \pi^{*}_{1}\mathcal{F}\otimes_{\mathcal{O}_{X\times Y}}\pi^{*}_{2}\mathcal{G})\cong\ \oplus_{p+q=m} H^{p}(X, \mathcal{F})\otimes_{k} H^{q}(Y, \mathcal{G}).\]

\end{Theorem}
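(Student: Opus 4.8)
The plan is to reduce the assertion to the algebraic Künneth theorem for complexes of $k$-vector spaces, using Čech cohomology to linearize the geometry. Since $X$ and $Y$ are projective they are separated, so I may fix \emph{finite} affine open covers $\mathcal{U}=\{U_i\}$ of $X$ and $\mathcal{V}=\{V_j\}$ of $Y$ in which every finite intersection $U_\alpha=U_{i_0}\cap\cdots\cap U_{i_p}$ and $V_\beta=V_{j_0}\cap\cdots\cap V_{j_q}$ is again affine. A product of affine $k$-schemes is affine, so $\mathcal{W}=\{U_i\times V_j\}$ is an affine cover of $X\times Y$. Writing $\mathcal{H}:=\pi_1^*\mathcal{F}\otimes_{\mathcal{O}_{X\times Y}}\pi_2^*\mathcal{G}$, the sheaves $\mathcal{F}$, $\mathcal{G}$, $\mathcal{H}$ are coherent and have no higher cohomology on affines (Serre), so Čech cohomology with respect to $\mathcal{U}$, $\mathcal{V}$, $\mathcal{W}$ computes their sheaf cohomology.

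The geometric input is the local identity $\Gamma(U_\alpha\times V_\beta,\mathcal{H})=\Gamma(U_\alpha,\mathcal{F})\otimes_k\Gamma(V_\beta,\mathcal{G})$: writing $U_\alpha=\operatorname{Spec}A$ and $V_\beta=\operatorname{Spec}B$ with $\mathcal{F}|_{U_\alpha}=\widetilde{M}$ and $\mathcal{G}|_{V_\beta}=\widetilde{N}$, the restriction of $\mathcal{H}$ corresponds to the $(A\otimes_k B)$-module $M\otimes_k N$, whose global sections are $M\otimes_k N$. With this I assemble the double complex
\[K^{p,q}=\check{C}^{p}(\mathcal{U},\mathcal{F})\otimes_k\check{C}^{q}(\mathcal{V},\mathcal{G}).\]
To see that $\operatorname{Tot}(K^{\bullet,\bullet})$ computes $H^\bullet(X\times Y,\mathcal{H})$, I would realize $K$ as the global sections of the external tensor product of the two Čech \emph{sheaf} resolutions of $\mathcal{F}$ and $\mathcal{G}$: each is an exact, $\Gamma$-acyclic resolution (its terms are pushforwards from affines), so their box-product total complex is a $\Gamma$-acyclic resolution of $\mathcal{H}$, and taking global sections recovers exactly $\operatorname{Tot}(K)$.

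It then remains to compute $H^\bullet(\operatorname{Tot}K)$, and here the hypothesis that $k$ is a \emph{field} is decisive. Each $\check{C}^{p}(\mathcal{U},\mathcal{F})$ is a free $k$-module, so $\check{C}^{p}(\mathcal{U},\mathcal{F})\otimes_k(-)$ is exact; computing the cohomology of $K$ first in the $\mathcal{V}$-direction gives $\check{C}^{p}(\mathcal{U},\mathcal{F})\otimes_k H^q(Y,\mathcal{G})$, and then in the $\mathcal{U}$-direction gives
\[H^m\big(\operatorname{Tot}K\big)\cong\bigoplus_{p+q=m}H^p(X,\mathcal{F})\otimes_k H^q(Y,\mathcal{G}),\]
which is the claimed formula. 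Equivalently, over a field every complex of vector spaces is chain-homotopy equivalent to its cohomology with zero differential, so $\check{C}^{\bullet}(\mathcal{U},\mathcal{F})$ may be replaced by $H^\bullet(X,\mathcal{F})$, collapsing the double complex with no $\operatorname{Tor}$-corrections.

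The main obstacle is precisely this collapse: a priori the spectral sequence of the double complex could carry higher differentials $d_2,d_3,\dots$, and the abutment could differ from the $E_2$-page by nontrivial extensions. This is exactly where projectivity and the field hypothesis are indispensable — projectivity makes all coherent cohomologies finite-dimensional, and over a field the splitting of every vector-space complex forces $E_2=E_\infty$ with all differentials and extension problems trivial. As a cross-check I would note the parallel derivation via the Leray spectral sequence of $\pi_2$, combined with the projection formula and flat base change (valid since $Y$ is flat over $k$), which yields $R^q\pi_{2*}\mathcal{H}\cong H^q(X,\mathcal{F})\otimes_k\mathcal{G}$ and hence the same $E_2$-page; the degeneration of that spectral sequence is again the crux.
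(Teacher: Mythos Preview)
The paper does not give its own proof of this theorem: it is quoted as a cited result from Sampson--Washnitzer [26] and used only as a tool to compute $H^{i}(\mathbb{CP}_{1}\times\mathbb{CP}_{1},\mathcal{O}(a,b))$. So there is nothing in the paper to compare your argument against; your task was really to supply a proof where the paper supplies only a reference.

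On the substance, your outline is essentially the classical proof (and is in fact close in spirit to the argument in [26]). The one place I would ask you to tighten is the identification of $\operatorname{Tot}(K^{\bullet,\bullet})$ with a complex computing $H^{\bullet}(X\times Y,\mathcal{H})$. The \v{C}ech complex $\check{C}^{\bullet}(\mathcal{W},\mathcal{H})$ for the product cover $\mathcal{W}=\{U_i\times V_j\}$ is \emph{not} literally $\operatorname{Tot}(K)$, because simplices of $\mathcal{W}$ are tuples of pairs, not pairs of tuples. Your fix via the box product of the two sheafy \v{C}ech resolutions is the right idea; to make it airtight, write down the bisimplicial sheaf complex $D^{p,q}=\bigoplus_{\alpha,\beta}(\iota_{\alpha\beta})_{*}\mathcal{H}|_{U_{\alpha}\times V_{\beta}}$ on $X\times Y$, observe that each column $D^{p,\bullet}$ resolves $\bigoplus_{\alpha}(\iota_{\alpha})_{*}\mathcal{H}|_{U_{\alpha}\times Y}$ and that the latter in turn resolves $\mathcal{H}$, so $\operatorname{Tot}(D)$ is a $\Gamma$-acyclic resolution of $\mathcal{H}$; then $\Gamma(D^{p,q})=K^{p,q}$ by your local computation.

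One small overclaim: you say projectivity is needed so that the cohomologies are finite-dimensional, and that this finiteness is what kills higher differentials and extension problems. In fact finite-dimensionality plays no role here; it is purely the field hypothesis that makes $-\otimes_{k}-$ exact and makes every short exact sequence of $k$-vector spaces split. Projectivity (indeed, separated and quasi-compact would suffice) is used only to obtain finite affine covers with affine intersections so that \v{C}ech cohomology agrees with sheaf cohomology.
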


Our goal is to show that $H^{1}(Z_{0},(\mathcal{O}_{\mathcal{Z}}(K_{\mathcal{Z}})\otimes2\mathcal{I}_{\tilde{\mathscr{F}}})_{0})$ 
is 3-dimensional and there is a nondegenerate element in 
$H^{1}(Z_{0}, (\mathcal{O}_{\mathcal{Z}}(K_{\mathcal{Z}})\otimes2\mathcal{I}_{\tilde{\mathscr{F}}})_{0})$.
The reason to choose $\mathcal{O}_{\mathcal{Z}}(K)\otimes2\mathcal{I}_{\tilde{\mathscr{F}}}$ instead of 
$\mathcal{O}_{\mathcal{Z}}(K)\otimes\mathcal{I}_{\tilde{\mathscr{F}}}$ is that we would like to get an element 
$\alpha\in H^{1}(Z_{0},(\mathcal{O}_{\mathcal{Z}}(K_{\mathcal{Z}})\otimes2\mathcal{I}_{\tilde{\mathscr{F}}})_{0})$, such that $\alpha|_{Q}$ is nonzero. 
Let $i: Q_{i}\to\tilde{Z}$ and $i: Q_{i}\to\tilde{F_{i}}$ be inclusion maps. Then we note that 
\[h^{1}(Q_{i}, i^{*}\mathcal{O}_{\tilde{Z}}(\pi^{*}K_{Z}))=h^{1}(Q_{i}, \mathcal{O}_{Q_{i}}(-4, 0))=3.\]
If we use $K\otimes\mathcal{I}_{\tilde{\mathscr{F}}}$, we get 
\[H^{1}(Q_{i}, i^{*}(\mathcal{O}_{\tilde{Z}}(\pi^{*}K_{Z}\otimes [Q_{i}])))=H^{1}(Q_{i}, \mathcal{O}_{Q_{i}}(-3, -1))=0.\]

\vspace{20pt}

\begin{Lemma}
Let $Z$ be the twistor space of $K3$-surface with a Ricci-flat K\"ahler metric and $\tilde{Z}$ be the blown up of $Z$ along three twistor lines. 
Let $F$ be the twistor space of $\overline{\mathbb{CP}_{2}}$ with Fubini-Study metric and $\tilde{F}$ be the blown up of $F$ along a twistor line. 
Then we have 
\[h^{1}(\tilde{Z}, \mathcal{O}_{\tilde{Z}}(\pi_{1}^{*}K_{Z}))=3\]
\[h^{1}(\tilde{F}, \mathcal{O}_{\tilde{F}}(\pi_{2}^{*}K_{F}))=0.\]
\end{Lemma}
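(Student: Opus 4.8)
The plan is to reduce both computations from the blown-up twistor spaces $\tilde{Z},\tilde{F}$ down to the original twistor spaces $Z,F$, and then to read off the answer from the Penrose transform of Theorem 3. Write $\pi$ for the relevant blow-down map ($\tilde{Z}\to Z$ or $\tilde{F}\to F$), so that $\pi^{*}K_{Z}=K_{\tilde{Z}}\otimes[-\mathscr{Q}]$ by Remark 3, matching the bundle $\mathcal{O}_{\tilde{Z}}(\pi_{1}^{*}K_{Z})$ appearing in the statement. Since $\pi$ is a blow-up along a disjoint union of smooth twistor lines of codimension $2$, its fibers are either points or copies of $\mathbb{CP}_{1}$; hence $\pi_{*}\mathcal{O}_{\tilde{Z}}=\mathcal{O}_{Z}$, and, computing the higher direct images fiberwise via the theorem on formal functions together with $H^{q}(\mathbb{CP}_{1},\mathcal{O})=0$ for $q\geq 1$, one gets $R^{q}\pi_{*}\mathcal{O}_{\tilde{Z}}=0$ for $q\geq 1$. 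By the projection formula, $R^{q}\pi_{*}(\pi^{*}K_{Z})=K_{Z}\otimes R^{q}\pi_{*}\mathcal{O}_{\tilde{Z}}$, which equals $K_{Z}$ for $q=0$ and vanishes for $q\geq 1$. The Leray spectral sequence for $\pi$ therefore degenerates and yields
\[H^{1}(\tilde{Z},\mathcal{O}_{\tilde{Z}}(\pi^{*}K_{Z}))\cong H^{1}(Z,\mathcal{O}_{Z}(K_{Z})),\]
and identically $H^{1}(\tilde{F},\mathcal{O}_{\tilde{F}}(\pi^{*}K_{F}))\cong H^{1}(F,\mathcal{O}_{F}(K_{F}))$.

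It then remains to compute $h^{1}$ of the canonical bundle on the base twistor spaces. As recalled in Section 2, for any twistor space the canonical bundle is $K=\mathcal{O}(-4)$, so Theorem 3 with $m=2$ identifies $H^{1}(Z,\mathcal{O}_{Z}(K))=H^{1}(Z,\mathcal{O}(-4))$ with the space of solutions of $\mathbf{D}_{2}\phi=0$, that is, with the complexified self-dual closed (hence harmonic) $2$-forms on the base $4$-manifold. This space has complex dimension equal to $b_{+}$ of the base. For the $K3$ surface $b_{+}=3$, giving $h^{1}(\tilde{Z},\mathcal{O}_{\tilde{Z}}(\pi^{*}K_{Z}))=3$. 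For $\overline{\mathbb{CP}_{2}}$ with the Fubini--Study metric one uses that $(\mathbb{CP}_{2},g_{FS})$ is self-dual Einstein, so after reversing orientation $(\overline{\mathbb{CP}_{2}},g_{FS})$ is anti-self-dual and its twistor space $F$ exists; since $b_{+}(\overline{\mathbb{CP}_{2}})=b_{-}(\mathbb{CP}_{2})=0$, we obtain $h^{1}(\tilde{F},\mathcal{O}_{\tilde{F}}(\pi^{*}K_{F}))=0$.

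The main obstacle is the first step: justifying $R^{q}\pi_{*}\mathcal{O}_{\tilde{Z}}=0$ for $q\geq 1$ and the consequent degeneration of the Leray spectral sequence, since this is exactly what lets the cohomology be computed on the base. I would treat this carefully using Grauert's direct image theorem and the theorem on formal functions, noting that these higher direct images are supported on the (disjoint) centers and are governed by the $\mathbb{CP}_{1}$-fibers of the exceptional divisors $Q_{i}=\mathbb{CP}_{1}\times\mathbb{CP}_{1}$, where $H^{\geq 1}(\mathbb{CP}_{1},\mathcal{O})=0$ forces the vanishing. A secondary point worth stating explicitly is the dimension bookkeeping in the Penrose transform: the complex vector space $H^{1}(Z,\mathcal{O}(-4))$ is the complexification of the real $b_{+}$-dimensional space of self-dual harmonic $2$-forms, so its complex dimension is exactly $b_{+}$, which produces the values $3$ and $0$.
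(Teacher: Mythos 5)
Your proposal is correct and follows essentially the same route as the paper: you push the cohomology down via $R^{q}\pi_{*}$-vanishing (fibers are points or $\mathbb{CP}_{1}$ with $H^{\geq 1}(\mathbb{CP}_{1},\mathcal{O})=0$), the projection formula, and $\pi_{*}\mathcal{O}_{\tilde{Z}}=\mathcal{O}_{Z}$, then identify $H^{1}(Z,\mathcal{O}_{Z}(K_{Z}))$ with self-dual harmonic $2$-forms via the Penrose transform and read off $b_{+}=3$ for $K3$ and $b_{+}=0$ for $\overline{\mathbb{CP}_{2}}$, exactly as in the paper's Lemmas 6--8 and Propositions 1--2. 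The only cosmetic difference is that you justify the direct-image vanishing via formal functions/Grauert rather than the paper's cited propositions, which is if anything slightly more careful since the blow-down is not submersive.
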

\begin{proof}
Below we show that 
$h^{1}(\tilde{Z}, \mathcal{O}_{\tilde{Z}}(\pi_{1}^{*}K_{Z}))=h^{1}(Z, \mathcal{O}_{Z}(K_{Z}))$ and
$h^{1}(\tilde{F}, \mathcal{O}_{\tilde{F}}(\pi_{2}^{*}K_{F}))=h^{1}(F, \mathcal{O}_{F}(K_{F}))$.
$H^{1}(Z, \mathcal{O}_{Z}(K_{Z}))$ corresponds to the space of self-dual harmonic 2-forms  on $K3$-surface. 
Since $b_{+}=3$ on this surface, we get $h^{1}(Z, \mathcal{O}_{Z}(K_{Z}))=3$.
Similarly, since there is no self-dual harmonic 2-form on $\overline{\mathbb{CP}_{2}}$, we have $h^{1}(F, \mathcal{O}_{F}(K_{F}))=0$.

\end{proof}

\begin{Lemma} 
Let $f: X \to Y $ be a continuous map of topological spaces and let $\mathcal{G}$ be a sheaf of abelian groups on $X$. 
If $R^{i}f_{*}\mathcal{G}=0$ for $i>0$, then for all $i\geq 0$, there is a following isomorphism. 
\[H^{i}(X, \mathcal{G})\cong H^{i}(Y, f_{*}\mathcal{G}).\]
\end{Lemma}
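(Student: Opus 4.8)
The plan is to run the Leray spectral sequence of the map $f$ and read off the degeneration forced by the hypothesis; this is the same spectral-sequence mechanism already used for $\varpi$ above, now exploited in the fibre direction rather than in the base. For a sheaf $\mathcal{G}$ of abelian groups on $X$ and a continuous map $f\colon X\to Y$, one has the Leray spectral sequence
\[E_{2}^{p,q}=H^{p}(Y, R^{q}f_{*}\mathcal{G})\Longrightarrow H^{p+q}(X, \mathcal{G}),\]
with differentials $d_{r}\colon E_{r}^{p,q}\to E_{r}^{p+r, q-r+1}$. Its existence rests only on the identity $\Gamma(X,-)=\Gamma(Y,-)\circ f_{*}$ on global sections together with the standard fact that $f_{*}$ carries flasque (or injective) sheaves to $\Gamma$-acyclic ones; neither point requires any input from the present geometric setting.

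First I would feed in the hypothesis $R^{q}f_{*}\mathcal{G}=0$ for $q>0$. This kills every entry of the $E_{2}$ page outside the row $q=0$, leaving
\[E_{2}^{p,q}=\begin{cases} H^{p}(Y, f_{*}\mathcal{G}) & q=0,\\ 0 & q>0.\end{cases}\]
Next I would check that all higher differentials vanish: for $r\geq 2$ the map $d_{r}\colon E_{r}^{p,0}\to E_{r}^{p+r,1-r}$ lands in the row $q=1-r<0$, hence in the zero group, while any differential whose target lies in the row $q=0$ must originate in the row $q=r-1>0$, which is again zero. Thus no higher differential can be nonzero, the sequence degenerates at $E_{2}$, and $E_{\infty}^{p,q}=E_{2}^{p,q}$.

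Finally, for each $n$ the abutment $H^{n}(X,\mathcal{G})$ carries a filtration whose associated graded pieces are the groups $E_{\infty}^{p,q}$ with $p+q=n$; since all of these vanish except $E_{\infty}^{n,0}$, the filtration is trivial and the edge homomorphism yields
\[H^{n}(X, \mathcal{G})\cong E_{\infty}^{n,0}=E_{2}^{n,0}=H^{n}(Y, f_{*}\mathcal{G})\]
for all $n\geq 0$, as claimed. I expect no genuine obstacle here: the entire content is the collapse of the spectral sequence, and the only point demanding a moment's care is verifying that both the source and the target of each $d_{r}$ with $r\geq 2$ lie off the single surviving row $q=0$.
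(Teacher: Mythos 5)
Your argument is correct and is exactly the proof the paper has in mind: the paper's own proof consists of invoking ``the Leray spectral sequence argument'' with a citation to [13, Proposition 3.0.6], and your write-up simply spells out that same collapse of the Leray spectral sequence $E_{2}^{p,q}=H^{p}(Y,R^{q}f_{*}\mathcal{G})\Rightarrow H^{p+q}(X,\mathcal{G})$ under the hypothesis $R^{q}f_{*}\mathcal{G}=0$ for $q>0$. The details you supply (vanishing of all rows $q>0$, degeneration at $E_{2}$, and the edge isomorphism) are accurate, so there is nothing to add.
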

\begin{proof}
This follows from the Leray spectral sequence argument. 
We refer to ([13], Proposition 3.0.6) for details of the proof. 
\end{proof}

Therefore, in order to prove  Lemma 5, we need to prove $R^{i}\pi_{*}\mathcal{O}_{\tilde{Z}}(\pi^{*}K)=0$ for $i>0$
and $\pi_{*}\mathcal{O}_{\tilde{Z}}(\pi^{*}K)=K$. For this, we use following Propositions ([28] V2. p.124, [13] Proposition 3.0.8)
\newtheorem{Proposition}{Proposition}
\begin{Proposition}
Let $X$ and $Y$ be complex manifolds and suppose $f:X\to Y$ be a holomorphic proper and submersive map and $\mathcal{G}$ be a coherent analytic sheaf on $X$. 
If $H^{i}(f^{-1}(y), \mathcal{G}|_{f^{-1}(y)})=0$ for all $y\in Y$, then $R^{i}f_{*}(\mathcal{G})=0$. 
\end{Proposition}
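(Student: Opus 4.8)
The plan is to deduce the statement directly from Grauert's semicontinuity and base-change theorem, recorded as Theorem 4 above, by turning the fiberwise vanishing hypothesis into the input that makes its conclusions available. First I would observe that the hypothesis $H^{i}(f^{-1}(y),\mathcal{G}|_{f^{-1}(y)})=0$ for every $y$ says precisely that the function $y\mapsto h^{i}(X_{y},\mathcal{G}_{y})$ is identically zero, and in particular is constant on $Y$. This is exactly the condition under which parts (2) and (3) of Theorem 4 apply.

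Next, by Theorem 4(2) the constancy of $h^{i}(X_{y},\mathcal{G}_{y})$ forces $R^{i}f_{*}(\mathcal{G})$ to be locally free on $Y$. To identify its rank I would apply Theorem 4(3): writing $\mathcal{I}_{y}$ for the ideal sheaf of $y$, the natural map
\[
(R^{i}f_{*}\mathcal{G})_{y}/\mathcal{I}_{y}(R^{i}f_{*}\mathcal{G})_{y}\longrightarrow H^{i}(X_{y},\mathcal{G}_{y})=0
\]
is bijective, so the fiber of the locally free sheaf $R^{i}f_{*}(\mathcal{G})$ at each $y$ is the zero vector space. A locally free sheaf whose fiber vanishes at a point has rank $0$ in a neighborhood of that point, hence vanishes near it; as this holds for every $y\in Y$, we conclude $R^{i}f_{*}(\mathcal{G})=0$. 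Equivalently, once local freeness is established one may finish by Nakayama's lemma applied to each stalk $(R^{i}f_{*}\mathcal{G})_{y}$.

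The one point that needs care, and which I regard as the main obstacle, is the verification that Theorem 4 genuinely applies here: that theorem is stated for sheaves that are flat over $Y$, whereas the present Proposition only assumes $\mathcal{G}$ coherent. Since $f$ is a holomorphic submersion it is a flat morphism, and in every situation where we invoke this Proposition (for instance in Lemma 5, where $\mathcal{G}=\mathcal{O}_{\tilde{Z}}(\pi^{*}K)$ is a line bundle) the sheaf $\mathcal{G}$ is locally free and hence automatically flat over $Y$, so the flatness hypothesis of Theorem 4 is met and the argument goes through unchanged. I would make this reduction explicit, either by restricting attention to the locally free case relevant to our applications or by appealing to the form of the semicontinuity theorem valid for proper submersions, where the relative Dolbeault complex furnishes a bounded complex of locally free sheaves on $Y$ computing $R^{\bullet}f_{*}\mathcal{G}$ whose $i$-th cohomology is measured against the vanishing fibers.
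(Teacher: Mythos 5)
The paper itself offers no argument for this Proposition: it is quoted from [28] (Voisin, Vol.~2, p.~124) and [13], and the intended proof there is the \emph{topological} proper base change isomorphism $(R^{i}f_{*}\mathcal{G})_{y}\cong H^{i}(f^{-1}(y),\mathcal{G}|_{f^{-1}(y)})$, valid for any proper map and any sheaf of abelian groups when $\mathcal{G}|_{f^{-1}(y)}$ means the topological restriction $i_{y}^{-1}\mathcal{G}$; the fiberwise vanishing then kills every stalk, hence the sheaf. Your route through Theorem 4 is a different one, and the core deduction is sound as far as it goes: $h^{i}(X_{y},\mathcal{G}_{y})\equiv 0$ is constant, part (3) identifies $(R^{i}f_{*}\mathcal{G})_{y}/\mathcal{I}_{y}(R^{i}f_{*}\mathcal{G})_{y}$ with $0$, and Nakayama (using coherence of the direct image) finishes; you also correctly isolate the flatness hypothesis of Theorem 4 as the crux. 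Since a submersion is a flat morphism, a locally free $\mathcal{G}$ is indeed flat over $Y$, so you have a complete proof of the locally free (more generally, $Y$-flat) case.

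The gap is that this does not prove the Proposition as stated, and your patch does not close it. First, an arbitrary coherent $\mathcal{G}$ need not be flat over $Y$ even when $f$ is a submersion (consider sheaves supported on subvarieties sitting inside fibers), and your fallback via a ``bounded complex of locally free sheaves computing $R^{\bullet}f_{*}\mathcal{G}$'' is again exactly the Grauert machinery, whose construction presupposes $Y$-flatness of $\mathcal{G}$; so the general coherent case is simply not reached by this argument. Second, and more concretely, your claim that flatness holds ``in every situation where we invoke this Proposition'' is false for the paper's actual application: in Lemma 5 / Proposition 2 the map is the blow-down $\pi:\tilde{Z}\to Z$ along a twistor line, which is neither submersive nor flat (the fiber dimension jumps from $0$ to $1$ over the blown-up line), so $\mathcal{O}_{\tilde{Z}}(\pi^{*}K)$, although locally free over $\mathcal{O}_{\tilde{Z}}$, is \emph{not} flat over $\mathcal{O}_{Z}$, and Theorem 4 is inapplicable there. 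That case requires the stalkwise base-change identity above, or the theorem on formal functions, $(R^{i}\pi_{*}\mathcal{G})^{\wedge}_{y}\cong\varprojlim_{n}H^{i}\bigl(\pi^{-1}(y),\mathcal{G}\otimes\mathcal{O}/\mathfrak{m}_{y}^{n}\mathcal{O}\bigr)$, which is how $R^{i}\pi_{*}\mathcal{O}_{\tilde{Z}}(\pi^{*}K)=0$ is standardly obtained for blow-ups. So your proof establishes a genuinely weaker statement, and the case the paper actually needs is precisely one it does not cover.
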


\begin{Proposition}
Let $\pi:(\tilde{Z}, Q)\to (Z, l)$ be  blowing up of a twistor line $l$ and $K$ be the canonical bundle on $Z$. 
Then, we have $R^{i}\pi_{*}\mathcal{O}_{\tilde{Z}}(\pi^{*}K)=0$. 
\end{Proposition}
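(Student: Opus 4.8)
The plan is to reduce the vanishing to a fiberwise statement and then make it rigorous through the theorem on formal functions, since $\pi$ collapses only the exceptional $\mathbb{CP}_{1}$'s over $l$. First I would note that $\pi$ is a biholomorphism over $Z\setminus l$, so for $i>0$ the sheaf $R^{i}\pi_{*}\mathcal{O}_{\tilde{Z}}(\pi^{*}K)$ is zero there and the entire question is the behaviour of its stalks along $l$. The fiber $E:=\pi^{-1}(p)$ over a point $p\in l$ is a fiber of the projection $\pi|_{Q}:Q=\mathbb{CP}_{1}\times\mathbb{CP}_{1}\to l$, hence $E\cong\mathbb{CP}_{1}$, and since $E$ is contracted to the point $p$ the restriction $\pi^{*}K|_{E}$ is trivial. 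Therefore $H^{i}(E,\pi^{*}K|_{E})=H^{i}(\mathbb{CP}_{1},\mathcal{O})=0$ for $i>0$, which is the fiberwise vanishing I want to propagate to the sheaf.

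To pass from the fiber to the direct image I would apply the theorem on formal functions, which reduces the vanishing of the stalk of $R^{i}\pi_{*}\mathcal{O}_{\tilde{Z}}(\pi^{*}K)$ at $p$ to the vanishing of $H^{i}(E_{k},\mathcal{O}_{E_{k}}(\pi^{*}K))$ for every infinitesimal neighborhood $E_{k}$ of $E$. The graded pieces of the ideal filtration are $\pi^{*}K|_{E}\otimes S^{k}N^{*}_{E/\tilde{Z}}$, so I need the conormal bundle. Using the conormal sequence of $E\subset Q\subset\tilde{Z}$ together with $N_{E/Q}=\mathcal{O}$ and $N_{Q/\tilde{Z}}|_{E}=\mathcal{O}(1,-1)|_{E}=\mathcal{O}(-1)$, the extension $0\to\mathcal{O}\to N_{E/\tilde{Z}}\to\mathcal{O}(-1)\to0$ splits because $H^{1}(\mathbb{CP}_{1},\mathcal{O}(1))=0$, giving $N_{E/\tilde{Z}}=\mathcal{O}\oplus\mathcal{O}(-1)$ and hence $S^{k}N^{*}_{E/\tilde{Z}}=\bigoplus_{j=0}^{k}\mathcal{O}(j)$. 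As every summand has $H^{1}=0$, an induction on $k$ along the sequences
\[0\to S^{k}N^{*}_{E/\tilde{Z}}\otimes\pi^{*}K|_{E}\to\mathcal{O}_{E_{k}}(\pi^{*}K)\to\mathcal{O}_{E_{k-1}}(\pi^{*}K)\to0\]
shows $H^{1}(E_{k},\mathcal{O}_{E_{k}}(\pi^{*}K))=0$ for all $k$, starting from $H^{1}(E,\pi^{*}K|_{E})=0$; the higher cohomology vanishes because each $E_{k}$ is supported on a curve. Taking the inverse limit gives $R^{i}\pi_{*}\mathcal{O}_{\tilde{Z}}(\pi^{*}K)=0$ for $i>0$, and the projection formula $R^{i}\pi_{*}\mathcal{O}_{\tilde{Z}}(\pi^{*}K)\cong\mathcal{O}_{Z}(K)\otimes R^{i}\pi_{*}\mathcal{O}_{\tilde{Z}}$ simultaneously records the companion fact $\pi_{*}\mathcal{O}_{\tilde{Z}}(\pi^{*}K)=K$ needed for Lemma 5.

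The main obstacle is that $\pi$ is not submersive, as the fiber dimension jumps from $0$ to $1$ along $l$; consequently Proposition 1 and ordinary cohomology-and-base-change do not apply to $\pi$ directly, which is exactly why the formal-function computation is forced on us. Proposition 1 does apply to the submersive bundle $\pi|_{Q}:Q\to l$, and it cleanly gives the acyclicity of the graded bundles in families, but the genuine work is assembling these graded pieces so as to control $R^{i}\pi_{*}$ on $\tilde{Z}$ rather than on $Q$. The single computation that must come out right is $N_{E/\tilde{Z}}=\mathcal{O}\oplus\mathcal{O}(-1)$: if the conormal bundle had any negative summand, the symmetric powers would acquire first cohomology and the induction would break. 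The corresponding statement for $\tilde{F}_{i}$ follows by the same argument with the two factors of $Q$ interchanged.
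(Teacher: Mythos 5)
Your argument is correct, and it is worth noting how it relates to the paper's own justification, which is a single line: the fibers of $\pi$ are points or $\mathbb{P}^{1}$, $\pi^{*}K$ restricts trivially to each fiber, and $H^{1}(\mathbb{P}^{1},\mathcal{O})=0$ — in effect an appeal to the fiberwise-vanishing criterion of Proposition 1 (with the details outsourced to [13, Prop.~3.0.8] and [28]). You begin from exactly the same fiberwise observation, but you correctly identify the point the paper glosses over: Proposition 1 assumes $f$ submersive, and the blow-down $\pi$ is not submersive since the fiber dimension jumps along $l$, so fiberwise vanishing does not by itself control the stalks of $R^{i}\pi_{*}$. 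Your remedy — the theorem on formal functions, reducing to $H^{i}(E_{k},\mathcal{O}_{E_{k}}(\pi^{*}K))=0$ for all infinitesimal neighborhoods $E_{k}$ of a fiber $E\cong\mathbb{CP}_{1}$ — is the standard rigorous mechanism, and your computation carrying it out is right: from $N_{E/Q}=\mathcal{O}$ and $N_{Q/\tilde{Z}}|_{E}=\mathcal{O}(-1)$ the conormal sequence splits (as $H^{1}(\mathbb{CP}_{1},\mathcal{O}(1))=0$), giving $N_{E/\tilde{Z}}=\mathcal{O}\oplus\mathcal{O}(-1)$, so $S^{k}N^{*}_{E/\tilde{Z}}=\bigoplus_{j=0}^{k}\mathcal{O}(j)$ has vanishing $H^{1}$, the induction through the graded pieces closes, and higher $H^{i}$ vanish for dimension reasons; coherence of $R^{i}\pi_{*}$ then lets you pass from the completed stalk to the stalk itself. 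You also recover $\pi_{*}\mathcal{O}_{\tilde{Z}}(\pi^{*}K)=\mathcal{O}_{Z}(K)$ via the projection formula, which the paper derives separately from Lemmas 7 and 8. In short: same germ of an idea, but your version supplies the proof the paper only cites, and your diagnosis of why the submersive criterion cannot be applied directly is exactly the sensitive point — as you say, the computation $N_{E/\tilde{Z}}=\mathcal{O}\oplus\mathcal{O}(-1)$ is what makes the induction work, and the same argument with the factors of $Q$ exchanged handles $\tilde{F}_{i}$.
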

\begin{proof}
This follows from $\pi^{-1}(y)$ is a point or $\mathbb{P}^{1}$ and $H^{1}(\mathbb{P}^{1}, \mathcal{O})=0$. 
\end{proof}

Then we get 
\[H^{i}(\tilde{Z}, \mathcal{O}_{\tilde{Z}}(\pi^{*}K))=H^{i}(Z, \pi_{*}\mathcal{O}_{\tilde{Z}}(\pi^{*}K)).\]
Using the Projection formula and Zariski's Main Theorem [10], we get $\pi_{*}\mathcal{O}_{\tilde{Z}}(\pi^{*}K)=\mathcal{O}_{Z}(K)$
([13], Lemma 3.0.9, 3.0.10).

\begin{Lemma}
(Projection formula)
Let $f:(X, \mathcal{O}_{X})\to (Y, \mathcal{O}_{Y})$ be a morphism of ringed spaces. 
If $\mathcal{G}$ is $\mathcal{O}_{X}$-module and $\mathcal{E}$  is locally free $\mathcal{O}_{Y}$-module of finite rank, then 
\[f_{*}(\mathcal{G}\otimes_{\mathcal{O}_{X}}f^{*}\mathcal{E})=f_{*}\mathcal{G}\otimes_{\mathcal{O}_{Y}}\mathcal{E}.\]
For $\mathcal{G}=\mathcal{O}_{X}$, we get
\[f_{*}f^{*}\mathcal{E}=f_{*}\mathcal{O}_{X}\otimes_{\mathcal{O}_{Y}}\mathcal{E}.\]

\end{Lemma}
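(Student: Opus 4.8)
The plan is to first write down a canonical \emph{projection morphism}
\[\theta: f_{*}\mathcal{G}\otimes_{\mathcal{O}_{Y}}\mathcal{E}\longrightarrow f_{*}(\mathcal{G}\otimes_{\mathcal{O}_{X}}f^{*}\mathcal{E}),\]
valid for an \emph{arbitrary} $\mathcal{O}_{Y}$-module $\mathcal{E}$, and then to show that it is an isomorphism once $\mathcal{E}$ is locally free of finite rank. To build $\theta$ I would use the unit $\eta:\mathcal{E}\to f_{*}f^{*}\mathcal{E}$ of the adjunction $(f^{*},f_{*})$ together with the natural map $f_{*}\mathcal{A}\otimes_{\mathcal{O}_{Y}}f_{*}\mathcal{B}\to f_{*}(\mathcal{A}\otimes_{\mathcal{O}_{X}}\mathcal{B})$ sending local sections $a\otimes b$ on $f^{-1}(V)$ to $a\otimes b$: tensoring $f_{*}\mathcal{G}$ with $\eta$ and then composing with this map (for $\mathcal{A}=\mathcal{G}$, $\mathcal{B}=f^{*}\mathcal{E}$) produces $\theta$. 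The crucial structural point is that $\theta$ is natural in $\mathcal{E}$ and compatible with restriction to open subsets of $Y$, so that whether $\theta$ is an isomorphism is a \emph{local} question on $Y$.

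Since isomorphy may be checked on a basis of opens, I would next restrict to an open $V\subseteq Y$ on which $\mathcal{E}$ trivializes, i.e. $\mathcal{E}|_{V}\cong\mathcal{O}_{V}^{\oplus n}$, and verify the claim there. The base case is $\mathcal{E}=\mathcal{O}_{Y}$: here $f^{*}\mathcal{O}_{Y}=\mathcal{O}_{X}$, hence $\mathcal{G}\otimes_{\mathcal{O}_{X}}f^{*}\mathcal{O}_{Y}=\mathcal{G}$ and $f_{*}\mathcal{G}\otimes_{\mathcal{O}_{Y}}\mathcal{O}_{Y}=f_{*}\mathcal{G}$, and one checks directly that $\theta$ is the identity of $f_{*}\mathcal{G}$. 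To pass from $n=1$ to general finite $n$ I would invoke that every functor involved is additive on \emph{finite} direct sums: $f^{*}$ commutes with direct sums (being a left adjoint), $-\otimes-$ commutes with direct sums in each variable, and $f_{*}$ commutes with finite direct sums. Consequently both sides of the formula, applied to $\mathcal{O}_{V}^{\oplus n}$, are canonically $(f_{*}\mathcal{G}|_{V})^{\oplus n}$, and $\theta$ respects this decomposition summand by summand, so it is an isomorphism over $V$.

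Finally, because $Y$ is covered by such trivializing opens $V$ and $\theta$ is a single globally defined morphism of sheaves that restricts to an isomorphism on each $V$, it is an isomorphism of $\mathcal{O}_{Y}$-modules. The displayed special case $f_{*}f^{*}\mathcal{E}=f_{*}\mathcal{O}_{X}\otimes_{\mathcal{O}_{Y}}\mathcal{E}$ is then the instance $\mathcal{G}=\mathcal{O}_{X}$.

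I expect the main obstacle to be bookkeeping rather than depth. The essential subtlety is that the rank must be \emph{finite}: $f_{*}$ does not commute with infinite direct sums in general, and this is precisely where the finite-rank hypothesis is used. The second point that deserves care is confirming that the isomorphisms over the various $V$ are genuinely induced by the one global map $\theta$, rather than merely existing abstractly, since it is this naturality that legitimizes gluing them into a global isomorphism. The two facts I would state explicitly are the identification $f^{*}\mathcal{O}_{Y}=\mathcal{O}_{X}$ and the additivity of $f_{*}$ on finite sums; everything else is formal.
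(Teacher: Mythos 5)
Your proposal is correct, and in fact it supplies more than the paper does: the paper states this projection formula without any proof, treating it as a standard fact (it is Exercise II.5.1(d) in Hartshorne, the paper's reference [10]) and immediately combining it with Zariski's Main Theorem to conclude $\pi_{*}\mathcal{O}_{\tilde{Z}}(\pi^{*}K)=\mathcal{O}_{Z}(K)$. So there is no in-paper argument to compare against; your argument is the standard one, and it is complete. You construct the canonical map $\theta$ from the unit of the adjunction $(f^{*},f_{*})$ and the natural map $f_{*}\mathcal{A}\otimes_{\mathcal{O}_{Y}}f_{*}\mathcal{B}\to f_{*}(\mathcal{A}\otimes_{\mathcal{O}_{X}}\mathcal{B})$, reduce to a trivializing open via naturality and compatibility with restriction, handle the base case $\mathcal{E}=\mathcal{O}_{Y}$ using $f^{*}\mathcal{O}_{Y}=\mathcal{O}_{X}$ (which holds by the definition $f^{*}\mathcal{G}=f^{-1}\mathcal{G}\otimes_{f^{-1}\mathcal{O}_{Y}}\mathcal{O}_{X}$ recalled in Section 2 of the paper), and pass to rank $n$ by finite additivity of all the functors involved. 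You also correctly isolate the two genuinely load-bearing points: that finiteness of the rank is what allows $f_{*}$ to commute with the direct sum, and that the local isomorphisms are restrictions of the single global morphism $\theta$, which is what makes the gluing legitimate rather than merely producing abstract local isomorphisms. This is exactly the level of care needed for the statement as used in the paper, where $f$ is the blow-down $\pi:\tilde{Z}\to Z$ and $\mathcal{E}=\mathcal{O}_{Z}(K)$ has rank one.
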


\begin{Lemma}
(Zariski's Main Theorem, weak version)
Let $X$ and $Y$ be noetherian integral schemes and let $f:X\to Y$ be a birational projective morphism. 
If $Y$ is normal, then $f_{*}\mathcal{O}_{X}=\mathcal{O}_{Y}$. 
\end{Lemma}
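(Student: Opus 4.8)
The plan is to reduce the statement to a local, commutative-algebra claim about integral closure, the essential inputs being that $f_{*}\mathcal{O}_{X}$ is a coherent sheaf of $\mathcal{O}_{Y}$-algebras and that normality of $Y$ controls which elements of the function field are integral. Since $Y$ is integral, its structure sheaf embeds into the constant sheaf $K(Y)$, and since $X$ is integral the same holds for $X$; because $f$ is birational the two function fields are canonically identified, $K(X)\cong K(Y)$. Through this identification I would regard both $\mathcal{O}_{Y}$ and $f_{*}\mathcal{O}_{X}$ as subsheaves of the constant sheaf $K(Y)$ on $Y$ (note $\Gamma(f^{-1}U,\mathcal{O}_{X})\subseteq K(X)$ for nonempty $U$ because $X$ is integral and $f$ dominant), so that it suffices to compare them stalk by stalk.

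First I would check injectivity of the natural homomorphism $\mathcal{O}_{Y}\to f_{*}\mathcal{O}_{X}$. As $f$ is birational, the induced map on function fields $K(Y)\to K(X)$ is an isomorphism, in particular injective; restricting to regular sections yields the inclusion $\mathcal{O}_{Y}\subseteq f_{*}\mathcal{O}_{X}$ inside $K(Y)$. Next I would invoke the coherence of higher direct images under a proper morphism (Grothendieck's finiteness theorem), which applies since $f$ projective implies $f$ proper; this shows $f_{*}\mathcal{O}_{X}$ is a coherent $\mathcal{O}_{Y}$-module, so that for each $y\in Y$ the stalk $(f_{*}\mathcal{O}_{X})_{y}$ is a finite $\mathcal{O}_{Y,y}$-module.

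The heart of the argument is then purely local. Fixing $y\in Y$, the stalk $(f_{*}\mathcal{O}_{X})_{y}$ is a subring of $K(Y)$ that is finite as a module over the local ring $\mathcal{O}_{Y,y}$; being finite it is integral over $\mathcal{O}_{Y,y}$, so every element of $(f_{*}\mathcal{O}_{X})_{y}$ is an element of $K(Y)$ integral over $\mathcal{O}_{Y,y}$. Here I would use the hypothesis that $Y$ is normal: $\mathcal{O}_{Y,y}$ is integrally closed in its fraction field $K(Y)$, which forces $(f_{*}\mathcal{O}_{X})_{y}\subseteq \mathcal{O}_{Y,y}$. Combined with the reverse inclusion from the previous step, this gives $(f_{*}\mathcal{O}_{X})_{y}=\mathcal{O}_{Y,y}$ for every $y$, and therefore $f_{*}\mathcal{O}_{X}=\mathcal{O}_{Y}$.

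I expect the main obstacle to be the finiteness of $f_{*}\mathcal{O}_{X}$ as an $\mathcal{O}_{Y}$-module: the embeddings into $K(Y)$ and the injectivity from dominance are formal, and the concluding step is a direct appeal to the definition of normality, whereas the finiteness is what genuinely requires the properness of $f$ together with the coherence theorem for proper pushforward. In the projective setting one can alternatively obtain this finiteness more directly, by reducing to an affine base and factoring $f$ through a projective space over $Y$, but in any formulation it is precisely the coherence of $f_{*}\mathcal{O}_{X}$ that makes the integral-closure argument available and hence unlocks the normality hypothesis.
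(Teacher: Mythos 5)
Your proposal is correct and complete: the embedding of $f_{*}\mathcal{O}_{X}$ into the constant sheaf $K(Y)$, the coherence of $f_{*}\mathcal{O}_{X}$ from properness (projectivity suffices here, as you note), and the determinant-trick conclusion that each stalk is integral over $\mathcal{O}_{Y,y}$ and hence contained in it by normality are exactly the right steps, and the stalkwise equality does upgrade to equality of subsheaves of $K(Y)$. The paper itself states this lemma without proof, citing it as a standard result (Hartshorne, and Kalafat's thesis), and your argument is precisely the canonical proof given in those references, so there is nothing to compare beyond noting that you have filled in what the paper takes as known.
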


\vspace{50pt}
First, we consider $H^{1}(\tilde{Z}, \mathcal{O}_{\tilde{Z}}(\pi^{*}K_{Z}))$. 
\begin{Lemma}
Let $(M, g)$ be an oriented, smooth, compact 4-dimensional Riemannian manifold 
and $g$ has the property $s=W_{+}=0$, where $s$ is the scalar curvature of $g$. 
Then any self-dual harmonic 2-form on $M$ is parallel. \end{Lemma}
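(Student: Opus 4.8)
The plan is to invoke the Weitzenb\"ock formula for self-dual 2-forms stated in the introduction and read off the conclusion directly; the entire force of the argument is that the two curvature terms in that formula vanish by hypothesis, leaving a pure Bochner vanishing.

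First I would recall that, since $M$ is compact without boundary, a 2-form $\omega$ is harmonic precisely when $\Delta\omega = 0$, where $\Delta = dd^{*} + d^{*}d$. Feeding this into the Weitzenb\"ock formula
\[\Delta\omega = \nabla^{*}\nabla\omega - 2W_{+}(\omega, \cdot) + \frac{s}{3}\omega,\]
and using the hypothesis $s = W_{+} = 0$, every term on the right except the first drops out. Hence a self-dual harmonic $\omega$ satisfies $\nabla^{*}\nabla\omega = 0$.

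Next I would pair this identity with $\omega$ in the $L^{2}$ inner product and integrate over $M$. Because $M$ is compact and closed, the integration by parts is valid with no boundary contribution, and the formal adjoint relation between $\nabla$ and $\nabla^{*}$ gives
\[0 = \int_{M}\langle \nabla^{*}\nabla\omega, \omega\rangle\, d\mu_{g} = \int_{M}|\nabla\omega|^{2}\, d\mu_{g}.\]
Since the integrand is pointwise nonnegative, it must vanish identically, whence $\nabla\omega \equiv 0$; that is, $\omega$ is parallel.

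I do not expect a genuine obstacle here, as the argument is essentially a one-line Bochner vanishing once the Weitzenb\"ock formula is available. The only points deserving care are that the self-duality of $\omega$ is exactly what permits use of the self-dual form of the Weitzenb\"ock identity, and that self-duality together with $d\omega = 0$ automatically forces $d^{*}\omega = -{*}d{*}\omega = -{*}d\omega = 0$, so that ``harmonic'' coincides with ``closed'' in this setting; and that compactness is what allows the boundary terms to be discarded in the integration by parts.
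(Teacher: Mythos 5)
Your proof is correct and is essentially the paper's own argument: the paper likewise specializes the Weitzenb\"ock formula for self-dual 2-forms to $s=W_{+}=0$ and concludes $\nabla\omega=0$ by compactness, merely leaving the $L^{2}$ integration by parts implicit where you spell it out. The added remarks on $d^{*}\omega=-{*}d{*}\omega$ and the role of compactness are accurate and harmless elaborations, not deviations.
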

\begin{proof}
From the following the Weitzenb\"ock formula for self-dual 2-forms, we have
\[\Delta\omega=\nabla^{*}\nabla\omega-W_{+}(\omega, \cdot)+\frac{s}{3}\omega.\]
Thus, if $s=W_{+}=0$ and $M$ is compact, we get $\nabla\omega=0$ for a self-dual harmonic 2-form.
\end{proof}

By Yau's theorem, a K3 surface admits a Ricci-flat K\"ahler metric [30]. 
Note that for a K\"ahler metric, the self-dual Weyl tensor $W_{+}$ is determined by the scalar curvature $s$.
Namely, $W_{+}$ takes the following form in a K\"ahler case. 
\[W_{+}=\left(
\begin{matrix}
-\frac{s}{12}&0&0\\
0&-\frac{s}{12}&0\\
0&0&\frac{s}{6}\\
\end{matrix}
\right)
\]
Thus, $s=0$ if and only if $W_{+}=0$. 
Therefore, K3 surface with Ricci-flat K\"ahler metric has $W_{+}=0$. 
In particular, a self-dual harmonic 2-form on K3 surface with Ricci-flat K\"ahler metric is parallel.

\begin{Lemma}
[5] Let $(Y, g)$ be a smooth, oriented Riemannian n-manifold, $n\geq 2$ and let $P$ be a point of $Y$. 
Let $\phi$ be a differential $l$-form on $Y-p$ such that $d\phi=0$ and $d*\phi=0$. 
If there is a neighborhood $U$ of $p$ and a positive constant $C$ such that $|\phi|<C$ on $U-p$, 
then $\phi$ extends to $Y$ uniquely and smoothly and $d\phi=0$ and $d*\phi=0$ on $Y$.
\end{Lemma}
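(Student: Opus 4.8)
The plan is to treat this as a removable-singularity statement for the Hodge--de Rham system and to proceed in two stages: first extend $\phi$ across $p$ as a \emph{weak} (distributional) solution, and then invoke elliptic regularity to upgrade the weak solution to a smooth one. Since $|\phi| < C$ on $U \setminus \{p\}$ and a point has measure zero, $\phi$ lies in $L^{\infty}_{loc}$, hence in $L^{1}_{loc}$ and $L^{2}_{loc}$ on all of $Y$; thus $\phi$ already defines a distributional $l$-form on the whole of $Y$, and the only question is whether the equations $d\phi = 0$ and $d{*}\phi = 0$, known to hold classically on $Y \setminus \{p\}$, persist distributionally across $p$.

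First I would show that $d\phi = 0$ as a current on $Y$. Let $\psi$ be a smooth compactly supported test $(l+1)$-form and introduce a radial cutoff $\chi_{\epsilon}$ that vanishes on $B(p,\epsilon)$, equals $1$ outside $B(p,2\epsilon)$, and satisfies $|\nabla \chi_{\epsilon}| \leq C/\epsilon$. Writing the distributional pairing $\langle d\phi, \psi\rangle = \int_{Y} \langle \phi, d^{*}\psi\rangle\, dV$ and inserting $\chi_{\epsilon}$, the Leibniz rule $d^{*}(\chi_{\epsilon}\psi) = \chi_{\epsilon}\, d^{*}\psi - \iota_{\nabla\chi_{\epsilon}}\psi$ splits the integral into a main term $\int_{Y} \langle \phi, d^{*}(\chi_{\epsilon}\psi)\rangle\, dV$ and an error $\int_{Y} \langle \phi, \iota_{\nabla\chi_{\epsilon}}\psi\rangle\, dV$. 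The main term vanishes for every $\epsilon$: since $\chi_{\epsilon}\psi$ is supported away from $p$, integration by parts on $Y \setminus \{p\}$ rewrites it as $\int_{Y\setminus\{p\}}\langle d\phi, \chi_{\epsilon}\psi\rangle\, dV = 0$. By symmetry—or by applying the identical argument to $*\phi$—the same reasoning gives $d^{*}\phi = 0$ distributionally on $Y$.

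The crux of the argument is the error term, and this is the step I expect to be the delicate one. Here I would estimate
\[
\left| \int_{Y} \langle \phi, \iota_{\nabla\chi_{\epsilon}}\psi\rangle\, dV \right| \leq C \cdot \frac{C}{\epsilon} \cdot \|\psi\|_{\infty} \cdot \mathrm{Vol}\big(B(p,2\epsilon)\setminus B(p,\epsilon)\big) \leq C' \epsilon^{n-1},
\]
using the uniform bound $|\phi| < C$, the gradient bound on $\chi_{\epsilon}$, and the fact that the annulus has volume $O(\epsilon^{n})$. Because $n \geq 2$, the exponent $n-1$ is positive and the error tends to $0$ as $\epsilon \to 0$, so $\langle d\phi, \psi\rangle = 0$ for every test form. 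This capacity estimate—the fact that a point has codimension at least $2$—is precisely what makes removability work, and it is the reason the hypothesis $n \geq 2$ cannot be dropped.

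Finally, once $d\phi = 0$ and $d^{*}\phi = 0$ hold as distributions on all of $Y$, the Hodge--de Rham Laplacian satisfies $\Delta\phi = (dd^{*} + d^{*}d)\phi = 0$ weakly; since $\Delta$ is elliptic, interior elliptic regularity forces $\phi$ to be smooth across $p$, and the classical equations $d\phi = 0$, $d^{*}\phi = 0$ then hold everywhere by continuity. Uniqueness is immediate, since two smooth extensions must agree on the dense set $Y \setminus \{p\}$ and hence coincide on all of $Y$.
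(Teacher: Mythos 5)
Your proof is correct, and since the paper gives no proof of this lemma at all---it is quoted from Bishop--LeBrun [5]---your two-stage argument (weak extension across $p$ via cutoffs $\chi_{\epsilon}$ with $|\nabla\chi_{\epsilon}|\le C/\epsilon$ and the $O(\epsilon^{n-1})$ capacity estimate that a point has codimension at least $2$, followed by weak harmonicity $\Delta\phi=0$ and elliptic regularity for the Hodge Laplacian) is exactly the standard removable-singularity proof underlying the cited reference. The only step you gloss over is that inserting $\chi_{\epsilon}$ into the pairing also leaves the term $\int_{Y}\langle\phi,(1-\chi_{\epsilon})\,d^{*}\psi\rangle\,dV$, which tends to $0$ by dominated convergence because $|\phi|\,|d^{*}\psi|$ is integrable on the compact support of $\psi$; this is routine and does not affect correctness.
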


\begin{Remark}
We note that if we assume $\phi$ is self-dual on $Y-\{p\}$ in the Lemma 10, 
the extended $\phi$ is also self-dual. 
Let $*$ be the Hodge-star operator of $(Y, g)$. 
Then 
\[*\phi(p)=lim_{z\to p}*\phi(z)=lim_{z\to p}\phi(z)=\phi(p)\]
since $*\phi$ and $\phi$ are smooth sections of $\Lambda^{2}$ and $\phi$ is self-dual on $Y-\{p\}$.

\end{Remark}
\vspace{20pt}

\begin{Lemma}
Let $\alpha\in H^{1}(\tilde{Z}, \mathcal{O}_{\tilde{Z}}(\pi^{*}K_{Z}))$ be a real element and suppose that $\alpha$ is not identically zero. 
Then $\alpha|_{l}\neq 0$ for any real twistor line $l\in \tilde{Z}-\mathscr{Q}$ and $\alpha|_{Q_{i}}\in H^{1}(Q_{i}, \mathcal{O}_{Q_{i}}(-4,0))$ is not zero. 
\end{Lemma}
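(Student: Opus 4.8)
The plan is to reduce both assertions to a single geometric fact: the real self-dual harmonic $2$-form on $K3$ attached to $\alpha$ is parallel, hence nowhere vanishing. First I would invoke the identification
\[H^{1}(\tilde{Z}, \mathcal{O}_{\tilde{Z}}(\pi^{*}K_{Z}))\cong H^{1}(Z, \mathcal{O}_{Z}(K_{Z}))\]
established in the proof of Lemma 5 (via Proposition 2, Lemma 6, the projection formula, and Zariski's Main Theorem), so that $\alpha=\pi^{*}\beta$ for a unique real class $\beta\in H^{1}(Z, \mathcal{O}_{Z}(K_{Z}))$, nonzero because $\pi^{*}$ is an isomorphism. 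By Theorem 3, $\beta$ corresponds to a real self-dual closed, hence harmonic, $2$-form $\omega$ on $K3$ with $\omega\not\equiv 0$. Since the $K3$ carries a Ricci-flat K\"ahler metric, $s=W_{+}=0$, so Lemma 9 forces $\omega$ to be parallel; thus $|\omega|$ is a nonzero constant and $\omega_{x}\neq 0$ for every $x\in K3$.

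For the twistor-line assertion, I would use the fact, recalled after Theorem 3, that the restriction $\beta|_{l_{x}}\in H^{1}(\mathbb{CP}_{1}, \mathcal{O}(-4))\cong(\mathbf{S}_{+}^{2})_{x}$ is exactly the value $\omega_{x}\in\Lambda^{+}_{x}$. A real twistor line $l\subset\tilde{Z}-\mathscr{Q}$ is carried isomorphically by $\pi$ onto a twistor line $l_{x}\subset Z$ with $x\in K3\setminus\{x_{1}, x_{2}, x_{3}\}$, so $\alpha|_{l}=(\pi^{*}\beta)|_{l}=\beta|_{l_{x}}$ corresponds to $\omega_{x}$. As $\omega_{x}\neq 0$, this gives $\alpha|_{l}\neq 0$.

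For the exceptional divisors, the key observation is that $\pi|_{Q_{i}}\colon Q_{i}\to l_{i}$ is the projection $\rho_{i}$ onto the first (twistor-line) factor, under which $\pi^{*}K_{Z}$ restricts to $\rho_{i}^{*}(K_{Z}|_{l_{i}})=\mathcal{O}_{Q_{i}}(-4, 0)$, matching Lemma 4. Hence $\alpha|_{Q_{i}}=(\pi^{*}\beta)|_{Q_{i}}=\rho_{i}^{*}(\beta|_{l_{i}})$. By the K\"unneth formula (Theorem 6),
\[H^{1}(Q_{i}, \mathcal{O}(-4, 0))=H^{1}(\mathbb{CP}_{1}, \mathcal{O}(-4))\otimes H^{0}(\mathbb{CP}_{1}, \mathcal{O}),\]
so $\rho_{i}^{*}$ is an isomorphism onto $H^{1}(Q_{i}, \mathcal{O}(-4, 0))$, in particular injective. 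Since $\beta|_{l_{i}}$ corresponds to $\omega_{x_{i}}\neq 0$, where $x_{i}\in K3$ is the point over the blown-up line $l_{i}$, it is nonzero, and therefore $\alpha|_{Q_{i}}=\rho_{i}^{*}(\beta|_{l_{i}})\neq 0$.

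The main obstacle is bookkeeping rather than analysis: one must verify that $\pi^{*}$ genuinely identifies the two first-cohomology groups, that restriction to $Q_{i}$ factors as the first-factor pullback $\rho_{i}^{*}$, and that the cohomological restriction $\beta|_{l_{x}}$ matches the pointwise value $\omega_{x}$ under the twistor correspondence of Theorem 3. Once the parallelism of $\omega$ is secured, the nowhere-vanishing is immediate, and the injectivity of $\rho_{i}^{*}$ supplied by K\"unneth closes both cases.
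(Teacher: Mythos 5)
Your proposal is correct, but it reaches the conclusion by a genuinely different route than the paper. The paper never writes $\alpha=\pi^{*}\beta$: it restricts $\alpha$ to $\tilde{Z}-\mathscr{Q}\cong Z-\mathcal{L}$, applies the Penrose transform there to obtain a closed real self-dual $2$-form $\phi$ on $K3-\{p_{1},p_{2},p_{3}\}$, argues that $\phi$ is bounded near the punctures by estimating $\alpha$ on twistor lines near the $l_{i}$, and then invokes the removable-singularity Lemma 10 (with Remark 3) to extend $\phi$ smoothly and self-dually across the three points, before applying Lemma 9 to get parallelism, constant norm, and hence non-vanishing. You bypass the analytic extension entirely: since $H^{1}(\tilde{Z},\mathcal{O}_{\tilde{Z}}(\pi^{*}K_{Z}))\cong H^{1}(Z,\mathcal{O}_{Z}(K_{Z}))$ via $\pi^{*}$ (the machinery the paper assembles for Lemma 5 --- Lemma 7, Propositions 1 and 2, the projection formula, and Zariski's Main Theorem), the harmonic form $\omega$ lives on all of $K3$ from the start, and parallelism does the rest; the one point you should make explicit is that this Leray-type isomorphism is indeed realized by the pullback map, which follows because the edge morphism composed with the identification $\pi_{*}\mathcal{O}_{\tilde{Z}}(\pi^{*}K_{Z})=\mathcal{O}_{Z}(K_{Z})$ is exactly $\pi^{*}$. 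Your handling of the exceptional divisors --- $\alpha|_{Q_{i}}=\rho_{i}^{*}(\beta|_{l_{i}})$ with $\rho_{i}^{*}$ injective by K\"unneth (the complementary summand $H^{0}(\mathbb{CP}_{1},\mathcal{O}(-4))\otimes H^{1}(\mathbb{CP}_{1},\mathcal{O})$ vanishes) and $\beta|_{l_{i}}$ corresponding to $\omega_{p_{i}}\neq 0$ --- is actually sharper than the paper's, where the claim $\alpha|_{Q_{i}}\neq 0$ is left largely implicit in the boundedness discussion. What the paper's route buys is the removable-singularity technique itself, which it reuses verbatim for the $K3\#n\overline{\mathbb{CP}_{2}}$, $n\geq 4$, cases and which would work even for classes not pulled back from $Z$; what yours buys is a purely functorial argument with no analysis at the punctures and a clean, checkable proof of the $Q_{i}$ assertion. (The paper's proof additionally records nondegeneracy of $\phi$ at a point, which is used later but is not part of the lemma's statement, so your omission of it is harmless here.)
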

\begin{proof}
Since $[Q]$ is trivial on $\tilde{Z}-Q$, by restricting cohomology on the subset, we get 
$\alpha|_{\tilde{Z}-\mathscr{Q}}\in H^{1}(\tilde{Z}-\mathscr{Q}, \mathcal{O}_{\tilde{Z}-\mathscr{Q}}(K_{\tilde{Z}-\mathscr{Q}}))
=H^{1}(Z-\mathcal{L}, \mathcal{O}_{Z-\mathcal{L}}(K_{Z-\mathcal{L}}))$, 
where $\mathcal{L}=l_{1}\cup l_{2}\cup l_{3}$ and it is real. 
Thus, by Theorem 4, $\alpha|_{\tilde{Z}-\mathscr{Q}}$ corresponds to a closed real self-dual 2-form $\phi$ on $K3-\{p_{1}, p_{2}, p_{3}\}$.
We claim $\phi$ is bounded near $p_{i}$. 
So it is enough to show that $\alpha|_{l}\in H^{1}(\mathbb{CP}_{1}, \mathcal{O}(-4))$ is bounded,
where $l$ is a twistor line on $Z$ near $l_{i}$. 
Since $\pi^{*}K_{Z}$ is non-trivial along the twistor line direction, $\alpha|_{l_{i}\times \{z\}}$ is bounded for any $z\in \mathbb{CP}_{1}$. 
Thus, for a twistor line $l$ near $l_{i}$, $\alpha|_{l}$ is bounded. 
Thus, $\phi$ is bounded near $p_{i}$. 
Then by the Lemma 10 and Remark 3, $\phi$ is extended smoothly as a self-dual 2-form and $d\phi=0$ on $K3$.
In particular, it is harmonic. 
By the Lemma 9, $\phi$ is parallel on $K3$. Then $||\phi||$ is constant 
and at a point $q$, there exists an orthonormal basis such that $\phi(q)=e_{1}\wedge e_{2}+e_{3}\wedge e_{4}$.
Then, it can be easily checked that $\phi$ is nondegenerate at $q$. 
\end{proof}

As a Corollary of Lemma 11, we get the following. 
\newtheorem{Corollary}{Corollary}
\begin{Corollary}
Let $i:Q\to \tilde{Z}$ be the inclusion map. Then the restriction map 
$r_{1}: H^{1}(\tilde{Z}, \mathcal{O}_{\tilde{Z}}(\pi^{*}K_{Z}))\to H^{1}(Q, i^{*}(\mathcal{O}_{\tilde{Z}}(\pi^{*}K_{Z})))$ is an isomorphism. 
\end{Corollary}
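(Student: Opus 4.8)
The plan is to deduce the isomorphism from a dimension count together with the injectivity statement already contained in Lemma 11. First I would observe that both sides are complex vector spaces of dimension $3$. For the domain this is Lemma 5, which gives $h^{1}(\tilde{Z}, \mathcal{O}_{\tilde{Z}}(\pi^{*}K_{Z})) = 3$. For the target, the computation recorded just before Lemma 5 identifies $i^{*}\mathcal{O}_{\tilde{Z}}(\pi^{*}K_{Z})$ with $\mathcal{O}_{Q}(-4,0)$ on $Q \cong \mathbb{CP}_{1} \times \mathbb{CP}_{1}$ and, via the K\"unneth formula (Theorem 6) together with Serre duality, yields $h^{1}(Q, \mathcal{O}_{Q}(-4,0)) = 3$. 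Hence $r_{1}$ is a $\mathbb{C}$-linear map between two $3$-dimensional complex vector spaces, and it will be an isomorphism as soon as it is shown to be injective.

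Next I would bring in the real structure. Since $Q$ is the exceptional divisor of a \emph{real} twistor line, the involution $\tilde{\sigma}$ preserves $Q$, so the restriction map $r_{1}$ intertwines the conjugate-linear involutions that $\tilde{\sigma}$ induces on source and target. In particular $r_{1}$ sends the real form of $H^{1}(\tilde{Z}, \mathcal{O}_{\tilde{Z}}(\pi^{*}K_{Z}))$ (the $\tilde{\sigma}$-invariant, i.e.\ real, classes) into the real form of $H^{1}(Q, \mathcal{O}_{Q}(-4,0))$, and each of these real forms has real dimension $3$.

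The key input is then Lemma 11: every real class $\alpha$ that is not identically zero has $\alpha|_{Q} \neq 0$. This is exactly the assertion that the restriction of $r_{1}$ to the $3$-dimensional real form is injective, and an injective $\mathbb{R}$-linear map between real vector spaces of equal finite dimension is bijective. Writing an arbitrary class as $\alpha = a + i b$ with $a, b$ real and using that $r_{1}$ respects this splitting, injectivity on the real form upgrades to injectivity of $r_{1}$ on the entire complex space; combined with the equality of complex dimensions, this proves $r_{1}$ is an isomorphism. The only point requiring care is the compatibility with the real structure -- confirming that $Q$ is genuinely $\tilde{\sigma}$-invariant so that $r_{1}$ commutes with conjugation -- since this is precisely what licenses the passage from Lemma 11's statement about $\sigma$-invariant classes to injectivity of the full $\mathbb{C}$-linear map. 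Everything else is the dimension count and the elementary linear algebra of real forms.
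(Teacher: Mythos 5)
Your proof is correct and takes essentially the same route as the paper, which likewise notes $h^{1}(\tilde{Z}, \mathcal{O}_{\tilde{Z}}(\pi^{*}K_{Z}))=h^{1}(Q, i^{*}\mathcal{O}_{\tilde{Z}}(\pi^{*}K_{Z}))=3$ and deduces injectivity of $r_{1}$ from Lemma 11. If anything you are more careful than the paper: its proof cites Lemma 11 directly even though that lemma only concerns real ($\tilde{\sigma}$-invariant) classes, whereas your decomposition $\alpha=a+ib$ and the observation that $r_{1}$ commutes with the conjugate-linear involutions (since $Q$ is the exceptional divisor over a $\sigma$-invariant twistor line) supplies the implicit step upgrading injectivity on the real form to injectivity of the full $\mathbb{C}$-linear map.
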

\begin{proof}
Note that $h^{1}(\tilde{Z}, \mathcal{O}_{\tilde{Z}}(\pi^{*}K_{Z})) =h^{1}(Q,  i^{*}(\mathcal{O}_{\tilde{Z}}(\pi^{*}K_{Z})))=3$ and $r_{1}$ is injective by Lemma 11. 
Thus, $r_{1}$ is an isomorphism. 
\end{proof}

\vspace{50pt}
We need to calculate $h^{1}(Z_{0}, (\mathcal{O}_{\mathcal{Z}}(K_{\mathcal{Z}})\otimes2\mathcal{I}_{\tilde{\mathscr{F}}})_{0})$.
First, we consider one of $(\tilde{F}_{i}, Q_{i})$, which we denote by $(\tilde{F}, Q)$. 
Let us consider the following exact sequence. 
\[0\longrightarrow \mathcal{O}_{\tilde{F}}(\pi^{*}K_{F})\longrightarrow   \mathcal{O}_{\tilde{F}}(K_{\tilde{F}}) \longrightarrow 
\mathcal{O}_{Q}(K_{\tilde{F}}|_{Q})\longrightarrow 0.\]
Using the fact $H^{0}(Q, \mathcal{O}_{Q}(K_{\tilde{F}}|_{Q}))=H^{1}(Q, \mathcal{O}_{Q}(K_{\tilde{F}}|_{Q}))=0$, we get 
\[H^{1}(\tilde{F}, \mathcal{O}_{\tilde{F}}(\pi^{*}K_{F}))=H^{1}(F, \mathcal{O}_{F}(K_{F}))=0.\]

Similarly, we get $H^{1}(\tilde{F}, \mathcal{O}_{\tilde{F}}(K_{\tilde{F}}\otimes n[Q]))=0$ for $n=1, 2$. 
From the exact sequence below, 
\[0\longrightarrow \mathcal{O}_{\tilde{F}}(K_{\tilde{F}}\otimes 2[Q]) \longrightarrow \mathcal{O}_{\tilde{F}}(K_{\tilde{F}}\otimes 3[Q]) \longrightarrow 
\mathcal{O}_{Q}(K_{\tilde{F}}\otimes 3[Q]|_{Q})\longrightarrow 0,\]
we get 
\[0\longrightarrow H^{1}(\tilde{F}, \mathcal{O}_{\tilde{F}}(K_{\tilde{F}}\otimes 3[Q]))
\xrightarrow{r_{2}} H^{1}(Q, \mathcal{O}_{Q}(K_{\tilde{F}}\otimes 3[Q]|_{Q}))\]
\[\longrightarrow H^{2}(\tilde{F}, \mathcal{O}_{\tilde{F}}(K_{\tilde{F}}\otimes 2[Q]))\longrightarrow H^{2}(\tilde{F}, \mathcal{O}_{\tilde{F}}(K_{\tilde{F}}\otimes 3[Q]))\longrightarrow 0.\]

In order to calculate $H^{2}(\tilde{F}, \mathcal{O}_{\tilde{F}}(K_{\tilde{F}}\otimes 2[Q]))$, we describe the twistor space $F$. 
Let $V$ be the vector space which is isomorphic to $\mathbb{C}^{3}$
and $V^{*}$ is the dual vector space of $V$. 
Then $F$ is given by [2], [6], [18].
\[\{([v], [w])\in P(V)\times P(V^{*})\cong \mathbb{CP}_{2}\times\mathbb{CP}_{2}|v\cdot w=0\}.\]
Thus, the twistor space $F$ is a hypersurface of $\mathbb{CP}_{2}\times\mathbb{CP}_{2}$
given by a linear system $\mathcal{O}(1,1)$.

\begin{Lemma}
Let $F$ be the twistor space of $\overline{\mathbb{CP}_{2}}$ with Fubini-Study metric and opposite orientation to usual one, which is a flag manifold. 
Then we have $H^{i}(F, \mathcal{O}(K_{F}))=0$ for $i=0, 1, 2.$ and $h^{3}(F, \mathcal{O}_{F}(K_{F}))=1$.
\end{Lemma}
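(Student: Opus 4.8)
The plan is to exploit the explicit realization of $F$ as the smooth hypersurface
\[F=\{([v],[w])\in\mathbb{CP}_{2}\times\mathbb{CP}_{2}\mid v\cdot w=0\}\]
of bidegree $(1,1)$, recorded just above the statement, together with the K\"unneth formula (Theorem 6) applied on the ambient product. First I would pin down $K_{F}$ by adjunction. Since $K_{\mathbb{CP}_{2}\times\mathbb{CP}_{2}}=\mathcal{O}(-3,-3)$ and $F$ is cut out by a section of $\mathcal{O}(1,1)$, adjunction gives
\[K_{F}=\big(K_{\mathbb{CP}_{2}\times\mathbb{CP}_{2}}\otimes\mathcal{O}(1,1)\big)\big|_{F}=\mathcal{O}(-2,-2)|_{F},\]
so the whole problem reduces to computing $H^{i}(F,\mathcal{O}_{F}(-2,-2))$ for $i=0,1,2,3$.

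Next I would twist the ideal-sheaf sequence of $F\subset\mathbb{CP}_{2}\times\mathbb{CP}_{2}$, namely
\[0\longrightarrow\mathcal{O}(-1,-1)\longrightarrow\mathcal{O}_{\mathbb{CP}_{2}\times\mathbb{CP}_{2}}\longrightarrow\mathcal{O}_{F}\longrightarrow0,\]
by $\mathcal{O}(-2,-2)$ to obtain
\[0\longrightarrow\mathcal{O}(-3,-3)\longrightarrow\mathcal{O}(-2,-2)\longrightarrow\mathcal{O}_{F}(-2,-2)\longrightarrow0.\]
Because $F\hookrightarrow\mathbb{CP}_{2}\times\mathbb{CP}_{2}$ is a closed embedding, one has $H^{i}(\mathbb{CP}_{2}\times\mathbb{CP}_{2},\mathcal{O}_{F}(-2,-2))=H^{i}(F,\mathcal{O}_{F}(-2,-2))$, so the long exact cohomology sequence of this triple computes exactly the groups we want in terms of line-bundle cohomology on the product.

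I would then feed the standard line-bundle cohomology of $\mathbb{CP}_{2}$ into Theorem 6, using that $H^{0}(\mathcal{O}(d))\neq0$ only for $d\geq0$, that $H^{1}\equiv0$, and that $H^{2}(\mathcal{O}(d))\cong H^{0}(\mathcal{O}(-d-3))^{*}$. Since $\mathcal{O}(-2)$ on $\mathbb{CP}_{2}$ is acyclic (no sections as $-2<0$, vanishing $H^{1}$, and $H^{2}=0$ because $-2>-3$), K\"unneth forces $H^{\ast}(\mathbb{CP}_{2}\times\mathbb{CP}_{2},\mathcal{O}(-2,-2))=0$ in every degree. For $\mathcal{O}(-3,-3)$ the factor $\mathcal{O}(-3)$ has cohomology only $H^{2}\cong\mathbb{C}$, so K\"unneth leaves a single nonzero group $H^{4}(\mathbb{CP}_{2}\times\mathbb{CP}_{2},\mathcal{O}(-3,-3))\cong\mathbb{C}$, all other degrees vanishing.

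Finally, since the middle term $\mathcal{O}(-2,-2)$ is acyclic on the product, the connecting homomorphisms collapse the long exact sequence into isomorphisms
\[H^{i}(F,\mathcal{O}_{F}(-2,-2))\cong H^{i+1}(\mathbb{CP}_{2}\times\mathbb{CP}_{2},\mathcal{O}(-3,-3))\quad\text{for all }i,\]
whence $H^{i}(F,\mathcal{O}(K_{F}))=0$ for $i=0,1,2$ and $h^{3}(F,\mathcal{O}_{F}(K_{F}))=\dim H^{4}(\mathcal{O}(-3,-3))=1$. I expect the only genuine bookkeeping to be in the third step: correctly tracking which Serre-dual $H^{2}$ groups on the $\mathbb{CP}_{2}$ factors survive, and confirming that $\mathcal{O}(-2,-2)$ is truly acyclic on the product, since it is precisely this acyclicity that turns the connecting maps into isomorphisms. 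As a consistency check and alternative route, Serre duality on the compact threefold $F$ gives $H^{i}(F,K_{F})\cong H^{3-i}(F,\mathcal{O}_{F})^{*}$, and the rationality of the flag manifold ($H^{q}(F,\mathcal{O}_{F})=0$ for $q>0$ and $H^{0}=\mathbb{C}$) reproduces the same answer immediately.
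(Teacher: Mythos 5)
Your proposal is correct and follows essentially the same route as the paper: adjunction on the $(1,1)$ hypersurface $F\subset\mathbb{CP}_{2}\times\mathbb{CP}_{2}$ to get $K_{F}=\mathcal{O}_{F}(-2,-2)$, the twisted ideal-sheaf sequence $0\to\mathcal{O}(-3,-3)\to\mathcal{O}(-2,-2)\to\mathcal{O}_{F}(K_{F})\to0$, and K\"unneth with the standard $\mathbb{CP}_{2}$ line-bundle cohomology, the only cosmetic difference being that you make the acyclicity of $\mathcal{O}(-2,-2)$ and the resulting isomorphisms $H^{i}(F,K_{F})\cong H^{i+1}(\mathbb{CP}_{2}\times\mathbb{CP}_{2},\mathcal{O}(-3,-3))$ explicit where the paper just inspects the long exact sequence. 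Your closing Serre-duality check via $H^{i}(F,K_{F})\cong H^{3-i}(F,\mathcal{O}_{F})^{*}$ and rationality of the flag manifold is a sound independent confirmation not in the paper.
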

\begin{proof}
Let $P:=\mathbb{CP}_{2}\times\mathbb{CP}_{2}$.
By adjunction formula, we have 
\[K_{F}=K_{P}\otimes\mathcal{O}(1,1)|_{F}\]
and $K_{P}=\mathcal{O}_{P}(-3, -3)$. Thus, in particular, we get $K_{F}=\mathcal{O}_{F}(-2, -2)$.

We consider the following exact sequence. 
\[0\longrightarrow \mathcal{O}_{P}(K_{P})\longrightarrow \mathcal{O}_{P}(K_{P}\otimes \mathcal{O}(1,1))\longrightarrow \mathcal{O}_{F}(K_{P}\otimes\mathcal{O}(1,1)|_{F})\longrightarrow 0.\]
Then from this, we get the following long exact sequence,
\[0\longrightarrow H^{0}(P, \mathcal{O}(-3,-3))\longrightarrow H^{0}(P,\mathcal{O}(-2,-2))\longrightarrow H^{0}(F, \mathcal{O}_{F}(K_{F}))\longrightarrow\]
\[\longrightarrow H^{1}(P, \mathcal{O}(-3,-3))\longrightarrow H^{1}(P,\mathcal{O}(-2,-2))\longrightarrow H^{1}(F, \mathcal{O}_{F}(K_{F})\longrightarrow\]
\[\longrightarrow H^{2}(P, \mathcal{O}(-3,-3))\longrightarrow H^{2}(P,\mathcal{O}(-2,-2))\longrightarrow H^{2}(F, \mathcal{O}_{F}(K_{F})\longrightarrow\]
\[\longrightarrow H^{3}(P, \mathcal{O}(-3,-3))\longrightarrow H^{3}(P,\mathcal{O}(-2,-2))\longrightarrow H^{3}(F, \mathcal{O}_{F}(K_{F}))\longrightarrow H^{4}(P,  \mathcal{O}(-3,-3))\longrightarrow 0.\]
Note that all terms are zero using the cohomology of $\mathbb{CP}_{2}$ and K\"unneth formula except the last two terms. 
Since $H^{4}(\mathbb{CP}_{2}\times\mathbb{CP}_{2}, \mathcal{O}(-3, -3))=H^{2}(\mathbb{CP}_{2}, \mathcal{O}(-3))\otimes H^{2}(\mathbb{CP}_{2}, \mathcal{O}(-3))$.
Thus, $h^{4}(P, \mathcal{O}_{P}(-3, -3))=1$ and therefore, we get 
\[h^{3}(F, \mathcal{O}_{F}(K_{F}))=1.\]
\end{proof}

\vspace{50pt}

Again from the following short exact sequence
\[0\longrightarrow \mathcal{O}_{\tilde{F}}(\pi^{*}K_{F})\longrightarrow  \mathcal{O}_{\tilde{F}}(K_{\tilde{F}}) \longrightarrow \mathcal{O}_{Q}(K_{\tilde{F}}|_{Q})\longrightarrow 0,\]
we get
\[0\longrightarrow H^{2}(\tilde{F}, \mathcal{O}_{\tilde{F}}(\pi^{*}K_{F_{i}}))\longrightarrow H^{2}(\tilde{F}, \mathcal{O}_{\tilde{F}}(K_{\tilde{F}}))\longrightarrow 0\]
\[ \longrightarrow H^{3}(\tilde{F}, \mathcal{O}_{\tilde{F}}(\pi^{*}K_{F}))\longrightarrow H^{3}(\tilde{F}, \mathcal{O}_{\tilde{F}}(K_{\tilde{F}}))\longrightarrow 0.\]
Using $H^{2}(\tilde{F}, \mathcal{O}_{\tilde{F}}(\pi^{*}K_{F}))=H^{2}(F, \mathcal{O}_{F}(K_{F}))=0$, we get $H^{2}(\tilde{F}, \mathcal{O}_{\tilde{F}}(K_{\tilde{F}}))=0$. 
Since $h^{3}(\tilde{F}, \mathcal{O}_{\tilde{F}}(\pi^{*}K_{F}))=h^{3}(F, \mathcal{O}_{F}(K_{F}))=1$, we get $h^{3}(\tilde{F}, \mathcal{O}_{F}(K_{\tilde{F}}))=1$. 

The following short exact sequence
\[0\longrightarrow  \mathcal{O}_{\tilde{F}}(K_{\tilde{F}}\otimes [Q])\longrightarrow  \mathcal{O}_{\tilde{F}}(K_{\tilde{F}}\otimes 2[Q]) \longrightarrow 
\mathcal{O}_{Q}(K_{\tilde{F}}\otimes 2[Q]|_{Q})
\longrightarrow 0,\]
gives 
\[0\longrightarrow H^{2}(\tilde{F},  \mathcal{O}_{\tilde{F}}(K_{\tilde{F}}\otimes [Q]))\longrightarrow H^{2}(\tilde{F}, \mathcal{O}_{\tilde{F}}(K_{\tilde{F}}\otimes 2[Q]))\longrightarrow\]
\[\longrightarrow 0 \longrightarrow H^{3}(\tilde{F},  \mathcal{O}_{\tilde{F}}(K_{\tilde{F}}\otimes [Q]))
\longrightarrow H^{3}(\tilde{F},  \mathcal{O}_{\tilde{F}}(K_{\tilde{F}}\otimes 2[Q]))\longrightarrow 0.\]
Thus, we get $ H^{2}(\tilde{F},  \mathcal{O}_{\tilde{F}}(K_{\tilde{F}}\otimes [Q]))\cong H^{2}(\tilde{F},  \mathcal{O}_{\tilde{F}}(K_{\tilde{F}}\otimes 2[Q]))$.

Also from the following short exact sequence
\[0\longrightarrow \mathcal{O}_{\tilde{F}}(K_{\tilde{F}})\longrightarrow  \mathcal{O}_{\tilde{F}}(K_{\tilde{F}}\otimes [Q])\longrightarrow \mathcal{O}_{Q}(K_{\tilde{F}}\otimes [Q]|_{Q})\longrightarrow 0,\]
we get 
\[0\longrightarrow H^{2}(\tilde{F}, \mathcal{O}_{\tilde{F}}(K_{\tilde{F}}\otimes [Q]))\longrightarrow
H^{2}(Q, \mathcal{O}_{Q}(-2, -2)) \longrightarrow \]
\[\longrightarrow H^{3}(\tilde{F}, \mathcal{O}_{\tilde{F}}(K_{\tilde{F}}))\longrightarrow 
H^{3}(\tilde{F}, \mathcal{O}_{\tilde{F}}(K_{\tilde{F}}\otimes [Q]))\longrightarrow 0.\]
Note that  $h^{2}(Q, \mathcal{O}_{Q}(-2, -2))=1$ and $h^{2}(\tilde{F}, \mathcal{O}_{\tilde{F}}(K_{\tilde{F}}))=0$. 
Thus, we get $h^{2}(\tilde{F}, \mathcal{O}_{\tilde{F}}(K_{\tilde{F}}\otimes [Q]))=0$ or $1$. 
If $H^{2}(\tilde{F}, \mathcal{O}_{\tilde{F}}(K_{\tilde{F}}\otimes [Q]))=0$, then $H^{2}(\tilde{F}, \mathcal{O}_{\tilde{F}}(K_{\tilde{F}}\otimes 2[Q]))=0$
and therefore in this case, $r_{2}:H^{1}(\tilde{F}, \mathcal{O}_{\tilde{F}}(K_{\tilde{F}}\otimes 3[Q]))\to H^{1}(Q, \mathcal{O}_{Q}(K_{\tilde{F}}\otimes 3[Q]) |_{Q})$ is an isomorphism. 
If $h^{2}(\tilde{F}, \mathcal{O}_{\tilde{F}}(K_{\tilde{F}}\otimes [Q]))=1$, then $h^{2}(\tilde{F}, \mathcal{O}_{\tilde{F}}(K_{\tilde{F}}\otimes 2[Q]))=1$.

From the following short exact sequence
\[0\longrightarrow \mathcal{O}_{\tilde{F}}(K_{\tilde{F}}\otimes 2[Q])\longrightarrow \mathcal{O}_{\tilde{F}}(K_{\tilde{F}}\otimes 3[Q])\longrightarrow 
\mathcal{O}_{Q}(K_{\tilde{F}}\otimes 3[Q]|_{Q})
\longrightarrow 0,\]
we get
\[0\longrightarrow H^{1}(\tilde{F}, \mathcal{O}_{\tilde{F}}(K_{\tilde{F}}\otimes 3[Q]))\xrightarrow{r_{2}}H^{1}(Q, \mathcal{O}_{Q}(K_{\tilde{F}}\otimes 3[Q]|_{Q}))\longrightarrow\]
\[\longrightarrow H^{2}(\tilde{F}, \mathcal{O}_{\tilde{F}}(K_{\tilde{F}}\otimes 2[Q]))\longrightarrow H^{2}(\tilde{F}, \mathcal{O}_{\tilde{F}}(K_{\tilde{F}}\otimes 3[Q]))\longrightarrow 0.\]

Thus, if $H^{2}(\tilde{F}, \mathcal{O}_{\tilde{F}}(K_{\tilde{F}}\otimes [Q]))\neq 0$ and $H^{2}(\tilde{F}, \mathcal{O}_{\tilde{F}}(K_{\tilde{F}}\otimes 3[Q]))\neq 0$, 
then $h^{2}(\tilde{F}, \mathcal{O}_{\tilde{F}}(K_{\tilde{F}}\otimes [Q]))=h^{2}(\tilde{F}, \mathcal{O}_{\tilde{F}}(K_{\tilde{F}}\otimes 3[Q]))=1$
and in this case, $r_{2}$ is an isomorphism. 
If $H^{2}(\tilde{F}, \mathcal{O}_{\tilde{F}}(K_{\tilde{F}}\otimes [Q]))\neq 0$ and $H^{2}(\tilde{F}, \mathcal{O}_{\tilde{F}}(K_{\tilde{F}}\otimes 3[Q]))=0$, 
then $h^{1}(\tilde{F}, \mathcal{O}_{\tilde{F}}(K_{\tilde{F}}\otimes 3[Q]))=2$ and $r_{2}$ is injective. 

Thus, we can conclude that either $h^{1}(\tilde{F},  \mathcal{O}_{\tilde{F}}(K_{\tilde{F}}\otimes 3[Q]))=h^{1}(Q, \mathcal{O}_{Q}(K_{\tilde{F}}\otimes 3[Q]|_{Q}))=
h^{1}(\mathbb{CP}_{1}\times\mathbb{CP}_{1}, \mathcal{O}(-4, 0))=3$ and $r_{2}$ is an isomorphism, or
$h^{1}(\tilde{F}, \mathcal{O}_{\tilde{F}}(K_{\tilde{F}}\otimes 3[Q]))=2$ and $r_{2}$ is injective.

\begin{Remark}
Note that if $H^{2}(\tilde{F}, \mathcal{O}_{\tilde{F}}(K_{\tilde{F}}\otimes [Q]))=0$, 
then $H^{2}(\tilde{F}, \mathcal{O}_{\tilde{F}}(K_{\tilde{F}}\otimes 2[Q]))=H^{2}(\tilde{F}, \mathcal{O}_{\tilde{F}}(K_{\tilde{F}}\otimes 3[Q]))=0$.
In this case, $h^{1}(\tilde{F}, \mathcal{O}_{\tilde{F}}(K_{\tilde{F}}\otimes 3[Q]))=3$ and $r_{2}$ is an isomorphism.

\end{Remark}

\begin{Lemma}
Let $F$ be the twistor space of $\overline{\mathbb{CP}}_{2}$ with Fubini-Study metric with nonstandard orientation
and let $\pi: (\tilde{F}, Q)\to (F, l)$ be the blowing up along a twistor line $l\subset F$. 
Let $r_{2} :  H^{1}(\tilde{F}, \mathcal{O}_{\tilde{F}}(K_{\tilde{F}}\otimes 3[Q]))\to H^{1}(Q, \mathcal{O}_{Q}(K_{\tilde{F}}\otimes 3[Q]|_{Q}))$ be the restriction map.
Then either $h^{1}(\tilde{F},  \mathcal{O}_{\tilde{F}}(K_{\tilde{F}}\otimes 3[Q]))=3$ and $r_{2}$ is an isomorphism or 
$h^{1}(\tilde{F}, \mathcal{O}_{\tilde{F}}(K_{\tilde{F}}\otimes 3[Q]))=2$ and $r_{2}$ is injective.

\end{Lemma}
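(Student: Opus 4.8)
The plan is to read the dichotomy off a single four-term exact sequence, since all the cohomological inputs have already been assembled above. Applying the long exact cohomology sequence to the divisor sequence
\[0\longrightarrow \mathcal{O}_{\tilde{F}}(K_{\tilde{F}}\otimes 2[Q])\longrightarrow \mathcal{O}_{\tilde{F}}(K_{\tilde{F}}\otimes 3[Q])\longrightarrow \mathcal{O}_{Q}(K_{\tilde{F}}\otimes 3[Q]|_{Q})\longrightarrow 0\]
and feeding in the vanishing $H^{1}(\tilde{F},\mathcal{O}_{\tilde{F}}(K_{\tilde{F}}\otimes 2[Q]))=0$ established above together with $H^{2}(Q,\mathcal{O}_{Q}(-4,0))=0$, I obtain
\[0\longrightarrow H^{1}(\tilde{F},\mathcal{O}_{\tilde{F}}(K_{\tilde{F}}\otimes 3[Q]))\xrightarrow{r_{2}}H^{1}(Q,\mathcal{O}_{Q}(-4,0))\xrightarrow{\delta}H^{2}(\tilde{F},\mathcal{O}_{\tilde{F}}(K_{\tilde{F}}\otimes 2[Q]))\longrightarrow H^{2}(\tilde{F},\mathcal{O}_{\tilde{F}}(K_{\tilde{F}}\otimes 3[Q]))\longrightarrow 0,\]
where I have used $K_{\tilde{F}}\otimes 3[Q]|_{Q}=\mathcal{O}_{Q}(-4,0)$ (Lemma 4) and the K\"unneth count $h^{1}(Q,\mathcal{O}_{Q}(-4,0))=3$ (Theorem 6). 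Exactness at the left shows $r_{2}$ is injective with no further work, so that half of the conclusion is immediate; what remains is to compute $h^{1}(\tilde{F},\mathcal{O}_{\tilde{F}}(K_{\tilde{F}}\otimes 3[Q]))=3-\dim(\mathrm{im}\,\delta)$.

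Next I would reduce $\dim(\mathrm{im}\,\delta)$ to the single unknown $h^{2}(\tilde{F},\mathcal{O}_{\tilde{F}}(K_{\tilde{F}}\otimes[Q]))$, which the discussion above pins to $0$ or $1$ and which satisfies $H^{2}(\tilde{F},\mathcal{O}_{\tilde{F}}(K_{\tilde{F}}\otimes[Q]))\cong H^{2}(\tilde{F},\mathcal{O}_{\tilde{F}}(K_{\tilde{F}}\otimes 2[Q]))$. If this group vanishes, the middle term of the four-term sequence is $0$, hence $\delta=0$, $r_{2}$ is an isomorphism, and $h^{1}=3$. If it is one-dimensional, then so is $H^{2}(\tilde{F},\mathcal{O}_{\tilde{F}}(K_{\tilde{F}}\otimes 2[Q]))$, and I split according to $h^{2}(\tilde{F},\mathcal{O}_{\tilde{F}}(K_{\tilde{F}}\otimes 3[Q]))$. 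When the latter is one-dimensional, the surjection $H^{2}(K_{\tilde{F}}\otimes 2[Q])\to H^{2}(K_{\tilde{F}}\otimes 3[Q])$ at the right end of the sequence is an isomorphism of one-dimensional spaces, so $\delta=0$, $r_{2}$ is again an isomorphism, and $h^{1}=3$. When it vanishes, $\delta$ surjects onto the one-dimensional middle term, so $\dim(\mathrm{im}\,\delta)=1$, $r_{2}$ is injective with one-dimensional cokernel, and $h^{1}=2$. Collecting these cases gives precisely the stated alternative.

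The genuinely hard point is not the bookkeeping but that one cannot decide a priori which branch occurs: fixing the exact value of $h^{2}(\tilde{F},\mathcal{O}_{\tilde{F}}(K_{\tilde{F}}\otimes[Q]))$ would demand geometric information about the blown-up flag manifold $\tilde{F}$ beyond the exact-sequence formalism. The lemma is phrased so as to sidestep this obstacle: both admissible values of that $h^{2}$ are absorbed into a single disjunction, and the feature shared by every branch --- injectivity of $r_{2}$, which is free from the $n=2$ vanishing, together with the bound that passing to the cokernel lowers the dimension by at most one --- is exactly what the later gluing argument on $Z_{0}$ will exploit. This is also why the weaker conclusion ``$h^{1}=2$ and $r_{2}$ injective'' is acceptable in the branch where surjectivity genuinely fails.
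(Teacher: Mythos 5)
Your proposal is correct and follows essentially the same route as the paper: the paper likewise extracts the four-term exact sequence from $0\to\mathcal{O}_{\tilde{F}}(K_{\tilde{F}}\otimes 2[Q])\to\mathcal{O}_{\tilde{F}}(K_{\tilde{F}}\otimes 3[Q])\to\mathcal{O}_{Q}(K_{\tilde{F}}\otimes 3[Q]|_{Q})\to 0$ using the vanishing $H^{1}(\tilde{F},\mathcal{O}_{\tilde{F}}(K_{\tilde{F}}\otimes 2[Q]))=0$, reduces everything to the isomorphism $H^{2}(\tilde{F},\mathcal{O}_{\tilde{F}}(K_{\tilde{F}}\otimes[Q]))\cong H^{2}(\tilde{F},\mathcal{O}_{\tilde{F}}(K_{\tilde{F}}\otimes 2[Q]))$ together with the bound $h^{2}(\tilde{F},\mathcal{O}_{\tilde{F}}(K_{\tilde{F}}\otimes[Q]))\leq 1$, and then performs exactly your case split on $h^{2}(\tilde{F},\mathcal{O}_{\tilde{F}}(K_{\tilde{F}}\otimes 3[Q]))$. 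There is no difference of substance, and your closing observation about why the statement is phrased as a disjunction matches the paper's subsequent resolution of the ambiguity via Corollary 2.
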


\vspace{20pt}

\begin{Lemma}

 $h^{1}(Z_{0}, \mathcal{O}_{\mathcal{Z}}(K\otimes 2\mathcal{I}_{\tilde{\mathscr{F}}})_{0})\geq3$
\end{Lemma}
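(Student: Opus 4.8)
The plan is to obtain the inequality directly from the upper semicontinuity of fiberwise cohomology that was set up in Section 3, rather than by computing $h^{1}(Z_{0},(\mathcal{O}_{\mathcal{Z}}(K)\otimes2\mathcal{I}_{\tilde{\mathscr{F}}})_{0})$ through the gluing data. Write $\mathcal{E}:=\mathcal{O}_{\mathcal{Z}}(K)\otimes2\mathcal{I}_{\tilde{\mathscr{F}}}$ and $\mathcal{E}_{t}:=(\mathcal{E})_{t}$ for its restriction to the fiber $Z_{t}$. As recorded after Theorem 4, $\mathcal{E}$ is an invertible, hence coherent, sheaf on $\mathcal{Z}$ that is flat over every $t\in\mathcal{U}$ because $\varpi:\mathcal{Z}\to\mathcal{U}$ is flat and proper; consequently Theorem 4(1) applies, and $t\mapsto h^{1}(Z_{t},\mathcal{E}_{t})$ is an upper semicontinuous function on $\mathcal{U}$.

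First I would pin down the value of this function on the punctured neighborhood. For $t\neq0$ the fiber $Z_{t}$ is disjoint from $\tilde{\mathscr{F}}$, so Lemma 2(1) gives $\mathcal{E}_{t}=\mathcal{O}_{Z_{t}}(K_{t})$. By Theorem 3 together with the Weitzenb\"ock computation recalled in Section 3, $H^{1}(Z_{t},\mathcal{O}_{Z_{t}}(K_{t}))$ is exactly the space of self-dual harmonic 2-forms on $K3\#3\overline{\mathbb{CP}_{2}}$, whose dimension is $b_{+}=3$. Hence $h^{1}(Z_{t},\mathcal{E}_{t})=3$ for every real $t\neq0$ near $0$.

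Upper semicontinuity then finishes the argument immediately: the value at the special point dominates the values at nearby points, so $h^{1}(Z_{0},\mathcal{E}_{0})\geq h^{1}(Z_{t},\mathcal{E}_{t})=3$ for such $t$. There is no serious obstacle in this direction; the only points needing care are that the flatness and properness hypotheses of Theorem 4 genuinely hold across the singular fiber $Z_{0}$ (established in Section 3) and that the semicontinuity inequality is invoked in the correct direction. I would stress that this gives only the lower bound. The matching bound $h^{1}(Z_{0},\mathcal{E}_{0})\leq3$, and ultimately the production of a nondegenerate element, is where the real work lies: one uses the Mayer--Vietoris sequence $0\to\mathcal{E}_{0}\to\mathcal{E}|_{\tilde{Z}}\oplus\mathcal{E}|_{\tilde{\mathscr{F}}}\to\mathcal{E}|_{\mathscr{Q}}\to0$, where $\mathcal{E}|_{\mathscr{Q}}=\bigoplus_{i}\mathcal{O}_{Q_{i}}(-4,0)$ has $H^{0}(\mathscr{Q},\mathcal{E}|_{\mathscr{Q}})=0$, so that $H^{1}(Z_{0},\mathcal{E}_{0})$ is identified with the kernel of the difference of restriction maps into $\bigoplus_{i}H^{1}(Q_{i},\mathcal{O}_{Q_{i}}(-4,0))$. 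Because each $r_{2}^{(i)}$ is injective by Lemma 14, projection onto the $\tilde{Z}$-factor embeds this kernel into $H^{1}(\tilde{Z},\mathcal{O}_{\tilde{Z}}(\pi^{*}K_{Z}))\cong\mathbb{C}^{3}$, forcing $h^{1}(Z_{0},\mathcal{E}_{0})\leq3$; comparison with the present lemma then yields equality, which via Corollary 1 and Lemma 14 pins down the isomorphism case and delivers nondegeneracy.
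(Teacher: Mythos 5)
Your proof is correct and follows essentially the same route as the paper: identify $(\mathcal{O}_{\mathcal{Z}}(K)\otimes2\mathcal{I}_{\tilde{\mathscr{F}}})_{t}$ with $\mathcal{O}_{Z_{t}}(K_{t})$ for $t\neq0$, use the Penrose transform to get $h^{1}(Z_{t},\mathcal{O}_{Z_{t}}(K_{t}))=b_{+}(K3\#3\overline{\mathbb{CP}_{2}})=3$, and conclude by the upper semicontinuity furnished by Theorem 4 via flatness and properness of $\varpi$. Your closing sketch of the matching upper bound is extra material belonging to Proposition 3 rather than to this lemma, but it does not affect the correctness of the argument for the stated inequality.
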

\begin{proof}
Consider $\varpi:\mathcal{Z}\to \mathcal{U}$ be the standard complex deformation. 
Each fiber $Z_{t}$ for $t\neq 0$ is the twistor space of $(K3\#3\overline{\mathbb{CP}}_{2}, g_{t})$, 
where $g_{t}$ is a family of ASD metrics constructed in [6]. 
Note that $(\mathcal{O}_{\mathcal{Z}}(K)\otimes 2\mathcal{I}_{\tilde{\mathscr{F}}})_{t}=K_{Z_{t}}$ for $t\neq 0$. 
By Lemma 5, we have $h^{1}(Z_{t}, \mathcal{O}_{Z_{t}}(K_{Z_{t}}))=3$. By upper semicontinuity,   
we get dim $h^{1}(Z_{0}, \mathcal{O}_{\mathcal{Z}}(K\otimes 2\mathcal{I}_{\tilde{\mathscr{F}}})_{0})\geq3$.

\end{proof}

\begin{Proposition}
$h^{1}(Z_{0}, \mathcal{O}_{\mathcal{Z}}(K\otimes 2\mathcal{I}_{\tilde{\mathscr{F}}})_{0})=3$.

\end{Proposition}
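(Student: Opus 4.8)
The plan is to combine the lower bound $h^{1}(Z_{0},\mathcal{L}_{0})\geq 3$ already furnished by Lemma 14 with a matching upper bound $h^{1}(Z_{0},\mathcal{L}_{0})\leq 3$, where I abbreviate $\mathcal{L}:=\mathcal{O}_{\mathcal{Z}}(K)\otimes 2\mathcal{I}_{\tilde{\mathscr{F}}}$ and $\mathcal{L}_{0}:=(\mathcal{O}_{\mathcal{Z}}(K)\otimes 2\mathcal{I}_{\tilde{\mathscr{F}}})_{0}$. The upper bound I would extract from the Mayer--Vietoris (normalization) sequence of the normal crossing variety $Z_{0}=\tilde{Z}\cup_{\mathscr{Q}}\tilde{\mathscr{F}}$, whose two smooth pieces meet transversally along $\mathscr{Q}=Q_{1}\cup Q_{2}\cup Q_{3}$. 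For the invertible sheaf $\mathcal{L}_{0}$ this reads
\[0\longrightarrow \mathcal{L}_{0}\longrightarrow \big(\mathcal{L}_{0}|_{\tilde{Z}}\big)\oplus\Big(\textstyle\bigoplus_{i=1}^{3}\mathcal{L}_{0}|_{\tilde{F}_{i}}\Big)\longrightarrow \textstyle\bigoplus_{i=1}^{3}\mathcal{L}_{0}|_{Q_{i}}\longrightarrow 0,\]
the last arrow being the difference of the two restrictions on each $Q_{i}$. By Lemma 2 these restrictions are $\mathcal{L}_{0}|_{\tilde{Z}}=\mathcal{O}_{\tilde{Z}}(\pi^{*}K_{Z})$ and $\mathcal{L}_{0}|_{\tilde{F}_{i}}=\mathcal{O}_{\tilde{F}_{i}}(K_{\tilde{F}_{i}}\otimes 3[Q_{i}])$, and by Lemma 4 both of them restrict to $\mathcal{O}_{Q_{i}}(-4,0)$ on $Q_{i}$.

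Writing $A:=H^{1}(\tilde{Z},\mathcal{O}_{\tilde{Z}}(\pi^{*}K_{Z}))$, $B_{i}:=H^{1}(\tilde{F}_{i},\mathcal{O}_{\tilde{F}_{i}}(K_{\tilde{F}_{i}}\otimes 3[Q_{i}]))$ and $C_{i}:=H^{1}(Q_{i},\mathcal{O}_{Q_{i}}(-4,0))$, I would pass to the long exact cohomology sequence. Since $H^{0}(Q_{i},\mathcal{O}_{Q_{i}}(-4,0))=H^{0}(\mathbb{CP}_{1},\mathcal{O}(-4))\otimes H^{0}(\mathbb{CP}_{1},\mathcal{O})=0$ by the K\"unneth formula (Theorem 6), the term $H^{0}(\mathscr{Q},\mathcal{L}_{0}|_{\mathscr{Q}})$ vanishes, so the connecting homomorphism into $H^{1}(Z_{0},\mathcal{L}_{0})$ is zero and
\[H^{1}(Z_{0},\mathcal{L}_{0})\;\cong\;\ker\Big(\rho:\;A\oplus\textstyle\bigoplus_{i}B_{i}\longrightarrow \textstyle\bigoplus_{i}C_{i}\Big),\]
where $\rho$ sends $(\alpha,\beta_{1},\beta_{2},\beta_{3})$ to $(\alpha|_{Q_{i}}-\beta_{i}|_{Q_{i}})_{i=1,2,3}$.

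The decisive step is to bound $\dim\ker\rho$. By Lemma 13 each restriction $r_{2}^{(i)}:B_{i}\to C_{i}$ is \emph{injective}, whether $\dim B_{i}=3$ or $\dim B_{i}=2$. Hence if $(\alpha,\beta_{1},\beta_{2},\beta_{3})\in\ker\rho$ has $\alpha=0$, then $r_{2}^{(i)}(\beta_{i})=\alpha|_{Q_{i}}=0$ forces $\beta_{i}=0$ for every $i$; that is, the projection $\ker\rho\to A$ onto the first summand is injective. Consequently $h^{1}(Z_{0},\mathcal{L}_{0})=\dim\ker\rho\leq\dim A=3$, the last equality being Lemma 5. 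Together with Lemma 14 this yields $h^{1}(Z_{0},\mathcal{L}_{0})=3$.

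The argument becomes almost formal once the restrictions are identified, so I expect the care to be needed in two places. First, one must check that the two restrictions of $\mathcal{L}_{0}$ to each $Q_{i}$ really agree as line bundles: this is precisely where the switching of the two $\mathbb{CP}_{1}$ factors in the gluing, together with Lemma 4, is used to make both equal $\mathcal{O}_{Q_{i}}(-4,0)$, so that the Mayer--Vietoris difference map is well defined. Second, it is worth emphasizing that the bound uses only injectivity of $r_{2}^{(i)}$ in \emph{both} branches of Lemma 13, so the undetermined dichotomy there never has to be settled beforehand; in fact the computation settles it a posteriori, since forcing $\dim\ker\rho=3$ with each $r_{2}^{(i)}$ injective shows each $r_{2}^{(i)}$ is an isomorphism and $\dim B_{i}=3$, i.e. the first alternative of Lemma 13 always holds. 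Note also that the upper bound is logically independent of the isomorphism $r_{1}$ of Corollary 1, which is what feeds instead into the nondegeneracy discussion.
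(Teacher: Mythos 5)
Your proposal is correct and follows essentially the same route as the paper: the same normalization exact sequence for $Z_{0}=\tilde{Z}\cup_{\mathscr{Q}}\tilde{\mathscr{F}}$, the same vanishing $H^{0}(Q_{i},\mathcal{O}_{Q_{i}}(-4,0))=0$, the same injectivity of $r_{2}$ from Lemma 13, and the same lower bound from Lemma 14 via upper semicontinuity, with your injective-projection-to-$A$ phrasing being merely a repackaging of the paper's rank--nullity count $\dim\ker r\leq\dim A=3$. One caveat on your closing aside only: concluding a posteriori that each $r_{2}^{(i)}$ is an isomorphism is not a consequence of $\dim\ker\rho=3$ alone --- that only yields $\operatorname{Im}(A\to C_{i})\subseteq\operatorname{Im}\,r_{2}^{(i)}$ --- but additionally requires the surjectivity of the restriction $A\to C_{i}$ furnished by Corollary 1, which is exactly how the paper proves its Corollary 2, so your claim of logical independence from $r_{1}$ holds for the upper bound but not for that remark.
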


\begin{proof}
Let $a:\tilde{Z_{1}}\amalg\tilde{\mathscr{F}}\to Z_{0}$ be the normalization map. We consider the following exact sequence. 
\[0\longrightarrow \mathcal{O}_{\mathcal{Z}}(K\otimes 2\mathcal{I}_{\tilde{\mathscr{F}}})_{0}\longrightarrow a_{*}(\mathcal{O}_{\tilde{Z}}(K_{\tilde{Z}}\otimes[-\mathscr{Q}])
\oplus\mathcal{O}_{\tilde{\mathscr{F}}}(K_{\tilde{\mathscr{F}}}\otimes[3\mathscr{Q}]))
\longrightarrow\mathcal{O}_{\mathscr{Q}}(\pi^{*}K_{Z}|_{\mathscr{Q}})\longrightarrow 0.\]

Note that each restriction of $\mathcal{O}_{\tilde{Z}}(K_{\tilde{Z}}\otimes[-\mathscr{Q}])$ and 
$\mathcal{O}_{\tilde{\mathscr{F}}}(K_{\tilde{\mathscr{F}}}\otimes[3\mathscr{Q}])$ to $\mathscr{Q}$ are the same.

Note that $H^{0}(Q_{i}, \mathcal{O}_{Q_{i}}(\pi^{*}K_{Z}|_{Q_{i}}))=H^{0}(Q_{i}, \mathcal{O}_{Q_{i}}(-4, 0))=0$. 
Therefore, we get the following long exact sequence
\[0 \longrightarrow H^{1}(Z_{0}, \mathcal{O}_{\mathcal{Z}}(K\otimes 2\mathcal{I}_{\tilde{\mathscr{F}}})_{0})
\xrightarrow{f} H^{1}(\tilde{Z}, \mathcal{O}_{\tilde{Z}}(K_{\tilde{Z}}\otimes[-\mathscr{Q}]))\oplus 
H^{1}(\tilde{\mathscr{F}}, \mathcal{O}_{\tilde{\mathscr{F}}}(K_{\tilde{\mathscr{F}}}\otimes[3\mathscr{Q}])\longrightarrow\]
\[\xrightarrow{r} H^{1}(\mathscr{Q}, \mathscr{O}_{Q}(\pi^{*}K_{Z}|_{\mathscr{Q}}))\longrightarrow 
H^{2}(Z_{0}, \mathcal{O}_{\mathcal{Z}}(K\otimes 2\mathcal{I}_{\tilde{\mathscr{F}}})_{0})\longrightarrow\]
\[\longrightarrow H^{2}(\tilde{Z},  \mathcal{O}_{\tilde{Z}}(K_{\tilde{Z}}\otimes[-\mathscr{Q}])\oplus 
H^{2}(\tilde{\mathscr{F}}, \mathcal{O}_{\tilde{\mathscr{F}}}(K_{\tilde{\mathscr{F}}}\otimes[3\mathscr{Q}]))\longrightarrow\cdot\cdot\cdot\]
Since $h^{1}(Q_{i}, \mathcal{O}_{Q_{i}}(\pi^{*}K_{Z}|_{Q_{i}}))=h^{1}(Q_{i}, \mathcal{O}_{Q_{i}}(-4, 0))=3$, 
we have $h^{1}(\mathscr{Q}, \mathcal{O}_{\mathscr{Q}}(\pi^{*}K_{Z}|_{\mathscr{Q}}))=9$. 
The map $r$ is given by 
\[r=(\alpha|_{Q_{1}}-\beta_{1}|_{Q_{1}}, \alpha|_{Q_{2}}-\beta_{2}|_{Q_{2}}, \alpha|_{Q_{3}}-\beta_{3}|_{Q_{3}}),\]
where $\alpha\in H^{1}(\tilde{Z}, \mathcal{O}_{\tilde{Z}}(\pi^{*}K_{Z}))$ and $\beta_{i}\in H^{1}(\tilde{F_{i}}, \mathcal{O}_{\tilde{F}}(K_{\tilde{F}_{i}}\otimes 3[Q_{i}]))$. 
Since $r_{2}: H^{1}(\tilde{F_{i}},  \mathcal{O}_{\tilde{F}}(K_{\tilde{F}_{i}}\otimes 3[Q_{i}]))\xrightarrow{r_{2}}H^{1}(Q_{i}, \mathcal{O}_{Q_{i}}(K_{\tilde{F}_{i}}\otimes 3[Q_{i}])|_{Q_{i}})$
 is injective, 
we get dim Im r $\geq$ $h^{1}(\tilde{\mathscr{F}}, \mathcal{O}_{\tilde{\mathscr{F}}}(K_{\tilde{\mathscr{F}}}\otimes[3\mathscr{Q}]))$. Thus, we get 
dim ker r $\leq$ dim $H^{1}(\tilde{Z}, \mathcal{O}_{\tilde{Z}}(K_{\tilde{Z}}\otimes[-\mathscr{Q}]))$, which is 3. 
Since $f$ is injective and dim $H^{1}(Z_{0}, \mathcal{O}_{\mathcal{Z}}(K\otimes 2\mathcal{I}_{\tilde{\mathscr{F}}})_{0})\geq3$, 
we get dim Im $f\geq 3$. Thus, we get $3\leq$ dim Im $f$ $=dim $ ker $r\leq 3$. Thus, dim Im $f=3$ and therefore, 
dim $H^{1}(Z_{0}, \mathcal{O}_{\mathcal{Z}}(K\otimes 2\mathcal{I}_{\tilde{\mathscr{F}}})_{0})=3$.

\end{proof}

\begin{Corollary}
The restriction map $r_{2}:H^{1}(\tilde{F}, \mathcal{O}_{\tilde{F}}(K_{\tilde{F}}\otimes[3Q]))\to H^{1}(Q, \mathcal{O}_{Q}(K_{\tilde{F}}\otimes 3[Q]|_{Q}))$
is an isomorphism.

\end{Corollary}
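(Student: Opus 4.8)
The plan is to use Proposition 3 to rule out the second alternative in Lemma 13, namely the case $h^{1}(\tilde{F}, \mathcal{O}_{\tilde{F}}(K_{\tilde{F}}\otimes 3[Q]))=2$ with $r_{2}$ merely injective, leaving only the isomorphism case. Since Lemma 13 already guarantees that $r_{2}$ is injective, it is enough to prove surjectivity, equivalently that each $h^{1}(\tilde{F}_{i}, \mathcal{O}_{\tilde{F}}(K_{\tilde{F}_{i}}\otimes 3[Q_{i}]))$ equals $3$.

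First I would reuse the long exact sequence from the proof of Proposition 3, together with the maps $f$ and $r$ defined there. Since $f$ is injective with image equal to $\ker r$ by exactness, and Proposition 3 gives $\dim H^{1}(Z_{0}, (\mathcal{O}_{\mathcal{Z}}(K)\otimes 2\mathcal{I}_{\tilde{\mathscr{F}}})_{0})=3$, I obtain $\dim\ker r=3$. Concretely, $\ker r$ is the space of tuples $(\alpha, \beta_{1}, \beta_{2}, \beta_{3})$ with $\alpha\in H^{1}(\tilde{Z}, \mathcal{O}_{\tilde{Z}}(\pi^{*}K_{Z}))$ and $\beta_{i}\in H^{1}(\tilde{F}_{i}, \mathcal{O}_{\tilde{F}}(K_{\tilde{F}_{i}}\otimes 3[Q_{i}]))$ satisfying $\alpha|_{Q_{i}}=\beta_{i}|_{Q_{i}}$ for each $i$.

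Next I would examine the projection $p:\ker r\to H^{1}(\tilde{Z}, \mathcal{O}_{\tilde{Z}}(\pi^{*}K_{Z}))$ sending $(\alpha, \beta_{1}, \beta_{2}, \beta_{3})\mapsto\alpha$. If $\alpha=0$ then the defining relations become $\beta_{i}|_{Q_{i}}=0$, and the injectivity of $r_{2}$ from Lemma 13 forces $\beta_{i}=0$; hence $p$ is injective. As $\dim\ker r=3=\dim H^{1}(\tilde{Z}, \mathcal{O}_{\tilde{Z}}(\pi^{*}K_{Z}))$, the injection $p$ is an isomorphism and in particular surjective, so for every $\alpha$ there exist $\beta_{i}$ with $\alpha|_{Q_{i}}=\beta_{i}|_{Q_{i}}$. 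This exhibits $\alpha|_{Q_{i}}\in\mathrm{Im}\,r_{2}$ for every $\alpha$ and every $i$.

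Finally I would apply Corollary 1 to each $Q_{i}$: the restriction $\alpha\mapsto\alpha|_{Q_{i}}$ is an isomorphism of $H^{1}(\tilde{Z}, \mathcal{O}_{\tilde{Z}}(\pi^{*}K_{Z}))$ onto $H^{1}(Q_{i}, \mathcal{O}_{Q_{i}}(-4, 0))$. Thus as $\alpha$ ranges over the source, $\alpha|_{Q_{i}}$ sweeps out all of $H^{1}(Q_{i}, \mathcal{O}_{Q_{i}}(-4, 0))$, and combined with the previous step this forces $\mathrm{Im}\,r_{2}=H^{1}(Q_{i}, \mathcal{O}_{Q_{i}}(K_{\tilde{F}}\otimes 3[Q]|_{Q}))$. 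Hence $r_{2}$ is surjective, and being already injective it is an isomorphism. I expect the main obstacle to be conceptual rather than computational: one must notice that the surjectivity of the diagonal restriction supplied by Corollary 1 is precisely what upgrades the injectivity of $r_{2}$ to surjectivity; once the value $\dim\ker r=3$ and the explicit description of $\ker r$ are in hand, the rest is elementary linear algebra.
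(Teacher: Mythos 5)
Your proof is correct and uses essentially the same ingredients and structure as the paper's: the long exact sequence from the normalization of $Z_{0}$, the dimension count $\dim\ker r=3$ from Proposition 3, the injectivity of $r_{2}$ from Lemma 13, and the isomorphism $r_{1}$ from Corollary 1. The only difference is organizational --- you argue directly via the projection $p:\ker r\to H^{1}(\tilde{Z}, \mathcal{O}_{\tilde{Z}}(\pi^{*}K_{Z}))$, while the paper argues by contradiction through a dimension estimate on $\operatorname{Im} r$ --- which is a stylistic variant, not a different route.
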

\begin{proof}
Suppose $r_{2}$ is not an isomorphism. Then 
$h^{1}(\tilde{\mathscr{F}}, \mathcal{O}_{\tilde{\mathscr{F}}}(K_{\tilde{\mathscr{F}}}\otimes[-\mathscr{Q}]))<9$.
In this case, Im  r is strictly greater than $H^{1}(\tilde{\mathscr{F}}, K\otimes2\mathcal{I}_{\tilde{\mathscr{F}}}|_{\tilde{\mathscr{F}}})$
because $r_{1}$ is an isomorphism by Corollary 1.
Then dim Im $f$= dim Ker $r$ $<3=$ dim $H^{1}(\tilde{Z}, \mathcal{O}_{\tilde{Z}}(K_{\tilde{Z}}\otimes[-\mathscr{Q}]))$, 
which is a contradiction since dim Im $f=3$ from Proposition 3. 
Thus, $r_{2}$ is an isomorphism.

\end{proof}

From the argument before Remark 4 and Corollary 2, we get the following corollary. 

\begin{Corollary}
Either $H^{2}(\tilde{F}, \mathcal{O}_{\tilde{F}}(K_{\tilde{F}}\otimes n[Q]))=0$ or $h^{2}(\tilde{F}, \mathcal{O}_{\tilde{F}}(K_{\tilde{F}}\otimes n[Q]))=1$ for $n=1, 2, 3$.
\end{Corollary}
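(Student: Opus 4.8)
The plan is to chain together the three short exact sequences relating $\mathcal{O}_{\tilde{F}}(K_{\tilde{F}}\otimes n[Q])$ for consecutive $n$, namely the ones already assembled in the computations preceding Remark 4, and then to use Corollary 2 to pin down the top step $n=3$. The bookkeeping is governed by the restrictions to $Q$: one has $(K_{\tilde{F}}\otimes n[Q])|_{Q}=\mathcal{O}_{Q}(-2,-2),\ \mathcal{O}_{Q}(-3,-1),\ \mathcal{O}_{Q}(-4,0)$ for $n=1,2,3$, and the K\"unneth formula (Theorem 6) computes all the cohomology of these line bundles on $Q=\mathbb{CP}_{1}\times\mathbb{CP}_{1}$.

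First I would settle $n=1$. From
\[0\longrightarrow \mathcal{O}_{\tilde{F}}(K_{\tilde{F}})\longrightarrow \mathcal{O}_{\tilde{F}}(K_{\tilde{F}}\otimes[Q])\longrightarrow \mathcal{O}_{Q}(-2,-2)\longrightarrow 0,\]
together with $h^{2}(\tilde{F},\mathcal{O}_{\tilde{F}}(K_{\tilde{F}}))=0$ and $h^{2}(Q,\mathcal{O}_{Q}(-2,-2))=1$, the long exact sequence injects $H^{2}(\tilde{F},\mathcal{O}_{\tilde{F}}(K_{\tilde{F}}\otimes[Q]))$ into $H^{2}(Q,\mathcal{O}_{Q}(-2,-2))$, so its dimension is $0$ or $1$. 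To pass to $n=2$ I would use
\[0\longrightarrow \mathcal{O}_{\tilde{F}}(K_{\tilde{F}}\otimes[Q])\longrightarrow \mathcal{O}_{\tilde{F}}(K_{\tilde{F}}\otimes 2[Q])\longrightarrow \mathcal{O}_{Q}(-3,-1)\longrightarrow 0;\]
since every cohomology group of $\mathcal{O}_{Q}(-3,-1)$ vanishes (the $\mathcal{O}(-1)$ factor kills every K\"unneth term), the long exact sequence yields $H^{2}(\tilde{F},\mathcal{O}_{\tilde{F}}(K_{\tilde{F}}\otimes[Q]))\cong H^{2}(\tilde{F},\mathcal{O}_{\tilde{F}}(K_{\tilde{F}}\otimes 2[Q]))$, so $n=2$ inherits dimension $0$ or $1$.

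The step $n=3$ is the one that actually uses Corollary 2. From
\[0\longrightarrow \mathcal{O}_{\tilde{F}}(K_{\tilde{F}}\otimes 2[Q])\longrightarrow \mathcal{O}_{\tilde{F}}(K_{\tilde{F}}\otimes 3[Q])\longrightarrow \mathcal{O}_{Q}(-4,0)\longrightarrow 0,\]
feeding in $H^{1}(\tilde{F},\mathcal{O}_{\tilde{F}}(K_{\tilde{F}}\otimes 2[Q]))=0$ and $H^{2}(Q,\mathcal{O}_{Q}(-4,0))=0$, the long exact sequence reduces to
\[0\longrightarrow H^{1}(\tilde{F},\mathcal{O}_{\tilde{F}}(K_{\tilde{F}}\otimes 3[Q]))\xrightarrow{r_{2}}H^{1}(Q,\mathcal{O}_{Q}(-4,0))\xrightarrow{\delta}H^{2}(\tilde{F},\mathcal{O}_{\tilde{F}}(K_{\tilde{F}}\otimes 2[Q]))\longrightarrow H^{2}(\tilde{F},\mathcal{O}_{\tilde{F}}(K_{\tilde{F}}\otimes 3[Q]))\longrightarrow 0.\]
By Corollary 2 the restriction $r_{2}$ is an isomorphism, hence surjective, which forces the connecting map $\delta$ to vanish; the tail then gives $H^{2}(\tilde{F},\mathcal{O}_{\tilde{F}}(K_{\tilde{F}}\otimes 2[Q]))\cong H^{2}(\tilde{F},\mathcal{O}_{\tilde{F}}(K_{\tilde{F}}\otimes 3[Q]))$. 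Combining the three steps, all three groups share the common dimension $0$ or $1$, which is the assertion.

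The hard part will be only this last step, and it is conceptual rather than computational: the surjection $H^{2}(\tilde{F},\mathcal{O}_{\tilde{F}}(K_{\tilde{F}}\otimes 2[Q]))\twoheadrightarrow H^{2}(\tilde{F},\mathcal{O}_{\tilde{F}}(K_{\tilde{F}}\otimes 3[Q]))$ already bounds the $n=3$ dimension by the $n=2$ dimension, but it is precisely Corollary 2 (the isomorphism $r_{2}$) that upgrades this to an isomorphism and makes the three values coincide. Everything else is the K\"unneth bookkeeping and the long exact sequences already recorded before Remark 4.
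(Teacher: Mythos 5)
Your proof is correct and is essentially the paper's own argument made explicit: the paper's terse justification (``from the argument before Remark 4 and Corollary 2'') refers precisely to the three short exact sequences you chain together, namely the $n=1$ bound via $h^{2}(\tilde{F},\mathcal{O}_{\tilde{F}}(K_{\tilde{F}}))=0$ and $h^{2}(Q,\mathcal{O}_{Q}(-2,-2))=1$, the isomorphism $H^{2}(\tilde{F},\mathcal{O}_{\tilde{F}}(K_{\tilde{F}}\otimes[Q]))\cong H^{2}(\tilde{F},\mathcal{O}_{\tilde{F}}(K_{\tilde{F}}\otimes 2[Q]))$ from the acyclicity of $\mathcal{O}_{Q}(-3,-1)$, and the use of the isomorphism $r_{2}$ of Corollary 2 to kill the connecting map and obtain $H^{2}(\tilde{F},\mathcal{O}_{\tilde{F}}(K_{\tilde{F}}\otimes 2[Q]))\cong H^{2}(\tilde{F},\mathcal{O}_{\tilde{F}}(K_{\tilde{F}}\otimes 3[Q]))$. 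All the numerical inputs you quote ($K_{\tilde{F}}\otimes n[Q]|_{Q}=\mathcal{O}(-2,-2),\mathcal{O}(-3,-1),\mathcal{O}(-4,0)$ and the K\"unneth computations, including $H^{2}(Q,\mathcal{O}_{Q}(-4,0))=0$ and $H^{1}(\tilde{F},\mathcal{O}_{\tilde{F}}(K_{\tilde{F}}\otimes 2[Q]))=0$) agree with the paper, so there is nothing to add.
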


\vspace{50pt}

We show that there is a nondegenerate element in $H^{1}(Z_{0}, (\mathcal{O}_{\mathcal{Z}}(K)\otimes 2\mathcal{I}_{\tilde{\mathscr{F}}})_{0})$.
By lemma 11, if $\alpha\in H^{1}(\tilde{Z}, \pi^{*}K_{Z})$ is not identically zero, there $\alpha|_{Q}\neq0$ and $\alpha|_{l}\neq 0$ for every twistor line 
$l\subset \tilde{Z}-Q$. Below, we prove the similar one for a cohomology class in $H^{1}(\tilde{F}, \mathcal{O}_{\tilde{F}}(K_{\tilde{F}}\otimes3[Q]))$.

\begin{Lemma}
Let $\beta$ be a real element of $H^{1}(\tilde{F}, \mathcal{O}_{\tilde{F}}(K_{\tilde{F}}\otimes3[Q]))$
and suppose $\beta$ is not identically zero. 
Then $\beta|_{l}\neq 0$ for any  real twistor line $l\in \tilde{F}-Q$ and $\beta|_{Q}\neq 0$. 
\end{Lemma}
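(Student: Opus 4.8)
The statement has two parts, and the claim $\beta|_{Q}\neq 0$ is the easy one: by Corollary 2 the restriction
$r_{2}:H^{1}(\tilde{F},\mathcal{O}_{\tilde{F}}(K_{\tilde{F}}\otimes 3[Q]))\to H^{1}(Q,\mathcal{O}_{Q}(K_{\tilde{F}}\otimes 3[Q]|_{Q}))$
is an isomorphism, so $\beta\not\equiv 0$ forces $\beta|_{Q}=r_{2}(\beta)\neq 0$. For the twistor-line claim I would first restrict $\beta$ to $\tilde{F}-Q$. Since $[Q]$ is trivial there and $\tilde{F}-Q\cong F-l_{0}$, I obtain a real class $\beta|_{\tilde{F}-Q}\in H^{1}(F-l_{0},\mathcal{O}_{F}(K_{F}))$, which by the twistor correspondence (Theorem 4) corresponds to a closed real self-dual $2$-form $\phi$ on $\overline{\mathbb{CP}_{2}}-\{p_{0}\}$, where $p_{0}$ is the point whose twistor line is $l_{0}$. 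For a real twistor line $l\subset\tilde{F}-Q$ one has $\beta|_{l}\in H^{1}(\mathbb{CP}_{1},\mathcal{O}(-4))$, and $\beta|_{l}\neq 0$ precisely when $\phi$ is nonzero at the point dual to $l$. So the claim reduces to: $\phi$ is nowhere zero on $\overline{\mathbb{CP}_{2}}-\{p_{0}\}$.

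The decisive difference from Lemma 11 is that the boundedness step is unavailable here. By Lemma 4 the bundle $K_{\tilde{F}}\otimes 3[Q]$ is trivial along the twistor-line direction of $Q$ and non-trivial ($\mathcal{O}(-4)$) along the blown-up direction, the reverse of the $\tilde{Z}$ situation, so $\phi$ is \emph{not} bounded near $p_{0}$. Indeed, since $H^{1}(F,\mathcal{O}_{F}(K_{F}))=0$ (Lemma 13) there is no global self-dual harmonic $2$-form on $\overline{\mathbb{CP}_{2}}$; hence $\phi$ cannot extend smoothly across $p_{0}$, for otherwise Lemma 10 and Remark 3 would force $\phi$ into $H^{1}(F,\mathcal{O}_{F}(K_{F}))=0$. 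Concretely, $\beta|_{Q}\neq 0$ is exactly the nonzero leading coefficient of a pole of $\phi$ at $p_{0}$, valued in $\Lambda^{+}_{p_{0}}\cong H^{1}(\mathbb{CP}_{1},\mathcal{O}(-4))$ via Serre duality. In particular $\phi\not\equiv 0$, but I cannot invoke Lemma 9 to make $\phi$ parallel, so a new argument is needed for non-vanishing.

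To show $\phi$ is nowhere zero I would argue from the geometry of $(\overline{\mathbb{CP}_{2}},g_{FS})$. With the reversed orientation the Fubini-Study metric is anti-self-dual ($W_{+}=0$) with $s>0$, so on $\overline{\mathbb{CP}_{2}}-\{p_{0}\}$ the Weitzenb\"ock formula collapses to $\nabla^{*}\nabla\phi=-\frac{s}{3}\phi$, whence $|\phi|^{2}$ is subharmonic. This excludes an interior maximum of $|\phi|^{2}$ (consistent with the pole at $p_{0}$) but, crucially, does \emph{not} exclude interior zeros. The plan is therefore to exploit homogeneity: the identity component of the isometry stabilizer of $p_{0}$ is $\cong U(2)$ and acts on the $3$-dimensional space $H^{1}(\tilde{F},\mathcal{O}_{\tilde{F}}(K_{\tilde{F}}\otimes 3[Q]))$ through its action on the leading pole coefficient in $\Lambda^{+}_{p_{0}}\cong\mathbb{R}^{3}$, which is the vector representation of the associated $SO(3)$. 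Since $SO(3)$ acts transitively on directions, I may normalize the leading coefficient of a given nonzero real $\beta$; the resulting $\phi$ is then invariant under a residual $U(1)$, and the self-dual harmonic equation with this symmetry and the prescribed singularity reduces to a cohomogeneity-one problem from which $|\phi|>0$ away from $p_{0}$ can be read off. Equivalently, one may argue by a degree count, showing that the pole at $p_{0}$ absorbs the Euler number of $\Lambda^{+}$ over $\overline{\mathbb{CP}_{2}}$ and hence leaves $\phi$ with no further zeros.

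The main obstacle is precisely this non-vanishing away from $p_{0}$. The analytic route of Lemma 11 is closed off because $\phi$ is genuinely singular, and subharmonicity of $|\phi|^{2}$ permits interior minima and therefore cannot by itself rule out zeros. I expect to settle it through the explicit homogeneous (flag-manifold) description of $F$ recorded earlier, either by writing $\phi$ out under the residual $U(1)$-symmetry and verifying positivity of its norm, or by the Euler-class bookkeeping that forces all zeros to concentrate at the pole. Once $\phi$ is known to be nowhere zero, $\beta|_{l}\neq 0$ for every real twistor line $l\subset\tilde{F}-Q$, which together with $\beta|_{Q}\neq 0$ completes the proof.
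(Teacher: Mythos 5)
Your first half coincides with the paper: $\beta|_{Q}=r_{2}(\beta)\neq 0$ follows at once from the isomorphism of Corollary 2. But the second half, which is the actual content of the lemma, is not proved. You reduce it to showing that the singular self-dual harmonic form $\phi$ on $\overline{\mathbb{CP}_{2}}-\{p_{0}\}$ is nowhere zero, and then explicitly defer the decisive step (``I expect to settle it\dots''), offering two strategies, neither of which works as written. The cohomogeneity-one plan is only a plan: even granting the $U(2)$-equivariance and the $SO(3)$-normalization of the leading coefficient (which is legitimate, since $r_{2}$ identifies a real class with its ``pole datum'' in $\Lambda^{+}_{p_{0}}\cong\mathbb{R}^{3}$), the assertion that $|\phi|>0$ ``can be read off'' from the reduced equation is precisely what has to be proved, and nothing is written down. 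The Euler-class fallback is unsound on dimensional grounds: $\Lambda^{+}$ has rank $3$ over a $4$-manifold, so its Euler class lies in $H^{3}(\overline{\mathbb{CP}_{2}})=0$; the zero set of a self-dual $2$-form in general position is $1$-dimensional, not a finite set of points, so there is no point count for a pole to ``absorb'' --- and in any case a signed degree count could never rule out zeros. So the proposal has a genuine gap exactly at its central claim, as you yourself acknowledge.

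The paper closes this gap cohomologically rather than analytically, and the route is short. By Lemma 4, $K_{\tilde{F}}\otimes 3[Q]$ restricts to $Q$ as $\mathcal{O}(-4)$ along the blown-up direction and trivially along the twistor-line direction, so by K\"unneth $H^{1}(Q,\mathcal{O}_{Q}(K_{\tilde{F}}\otimes 3[Q]|_{Q}))\cong H^{1}(\mathbb{CP}_{1},\mathcal{O}(-4))\cong\mathbb{C}^{3}$, with restriction to each $\mathcal{O}(-4)$-curve in $Q$ an isomorphism; composed with $r_{2}$ (Corollary 2), restriction from $\tilde{F}$ to such a curve is therefore an isomorphism. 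A twistor line $l\subset\tilde{F}-Q$ carries the same restricted sheaf, $K_{F}|_{l}=\mathcal{O}(-4)$, and both $H^{1}(\tilde{F},\mathcal{O}_{\tilde{F}}(K_{\tilde{F}}\otimes 3[Q]))$ and $H^{1}(l,\mathcal{O}(-4))$ are $3$-dimensional; the paper then concludes, by moving through this family of rational curves on which the restricted sheaf is constantly $\mathcal{O}(-4)$, that $r:H^{1}(\tilde{F},\mathcal{O}_{\tilde{F}}(K_{\tilde{F}}\otimes 3[Q]))\to H^{1}(l,\mathcal{O}(-4))$ is an isomorphism for \emph{every} twistor line off $Q$, whence $\beta|_{l}\neq 0$ for every nonzero $\beta$ (reality is not even needed). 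In other words, the asymmetry with Lemma 11 that you correctly diagnosed --- no boundedness at $p_{0}$, so Lemma 10 and the parallelism of Lemma 9 are unavailable --- is circumvented, not confronted: the missing idea was to upgrade Corollary 2 to a restriction-to-lines isomorphism via the dimension count, rather than to analyze the pole of $\phi$.
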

\begin{proof}
Since $[Q]$ is trivial on $\tilde{F}-Q$, 
by restricting a real cohomology $\beta\in H^{1}(\tilde{F}, \mathcal{O}_{\tilde{F}}(K_{\tilde{F}}\otimes3[Q]))$ to $\tilde{F}-Q$, 
we get a real element in $H^{1}(\tilde{F}-Q, K_{\tilde{F}-Q})$. 
By Theorem 4, this element corresponds to a real self-dual harmonic 2-form on $(\overline{\mathbb{CP}}_{2}-\{y\}, g_{FS})$, 
where $g_{FS}$ is the restriction of Fubini-Study metric. Moreover, $g_{FS}$ on $\overline{\mathbb{CP}}_{2}-\{y\}$
with the non-standard orientation is conformal to Burns metric.
By Corollary 2, the restriction map $r_{2}:H^{1}(\tilde{F}, \mathcal{O}_{\tilde{F}}(K_{\tilde{F}}\otimes3[Q]))\to H^{1}(Q, \mathcal{O}_{Q}(K_{\tilde{F}}\otimes3[Q]|_{Q}))$
is an isomorphism. Note that $H^{1}(Q, \mathcal{O}_{Q}(K_{\tilde{F}}\otimes3[Q]|_{Q}))=H^{1}(\mathbb{CP}_{1}\times \mathbb{CP}_{1}, \mathcal{O}(0, -4))$. 
Thus, on $Q$, there is a rational curve $\mathbb{CP}_{1}$ such that the restriction of $K_{\tilde{F}}\otimes 3[Q]|_{Q}$ on it is $\mathcal{O}(-4)$
and $H^{1}(\mathbb{CP}_{1}, \mathcal{O}(-4))\neq 0$. On $\tilde{F}-Q$, $K_{\tilde{F}}\otimes 3[Q)|_{Q}$ is $K_{\tilde{F}-Q}$, 
and for a twistor line $l$ on $\tilde{F}-Q$, which is $\mathbb{CP}_{1}$, $K_{F}|_{l}=\mathcal{O}(-4)$ [2]. 
Namely, the restriction of the sheaf are the same for the rational curve on $Q$ and any twistor line on  $\tilde{F}-Q$.
From this, we get $r:H^{1}(\tilde{F}, \mathcal{O}_{\tilde{F}}(K_{\tilde{F}}\otimes3[Q]))\to H^{1}(\mathbb{CP}_{1}, \mathcal{O}(-4))$
is an isomorphism for any twistor line on  $\tilde{F}-Q$.

\end{proof}

\begin{Proposition}
There is a real element $\gamma\in H^{1}(Z_{0}, (\mathcal{O}_{\mathcal{Z}}(K)\otimes 2\mathcal{I}_{\tilde{\mathscr{F}}})_{0})$ such that 
$\gamma|_{l}\neq 0$, for any real twistor line in $\tilde{Z}-\mathscr{Q}$ and $\tilde{\mathscr{F}}-\mathscr{Q}$. 

\end{Proposition}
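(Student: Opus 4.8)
The plan is to build $\gamma$ out of a single nonzero real class on the $K3$ component $\tilde{Z}$ and then propagate it to the flag components $\tilde{F}_i$ by means of the two restriction isomorphisms already in hand (Corollary 1 for $\tilde{Z}$ and Corollary 2 for $\tilde{F}_i$), so that the matching condition defining the kernel of $r$ is automatically satisfied. First I would fix a nonzero real element $\alpha \in H^{1}(\tilde{Z}, \mathcal{O}_{\tilde{Z}}(\pi^{*}K_{Z}))$. Such an element exists because, by Lemma 5, this space is three-dimensional, and by Theorem 4 its real elements correspond to real self-dual harmonic $2$-forms on the underlying $K3$ surface, which form a three-dimensional real space since $b_{+}(K3)=3$; hence the real locus is nonzero.

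Next, using Corollary 2, which asserts that $r_{2}\colon H^{1}(\tilde{F}_{i}, \mathcal{O}_{\tilde{F}_{i}}(K_{\tilde{F}_{i}}\otimes 3[Q_{i}]))\to H^{1}(Q_{i}, \mathcal{O}_{Q_{i}}(K_{\tilde{F}_{i}}\otimes 3[Q_{i}]|_{Q_{i}}))$ is an isomorphism, I would set $\beta_{i}:=r_{2}^{-1}(\alpha|_{Q_{i}})$ for $i=1,2,3$. By Lemma 11 one has $\alpha|_{Q_{i}}\neq 0$, so each $\beta_{i}$ is nonzero, and by construction $\alpha|_{Q_{i}}=\beta_{i}|_{Q_{i}}$. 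Consequently the pair $(\alpha,(\beta_{1},\beta_{2},\beta_{3}))$ is annihilated by the map $r=(\alpha|_{Q_{1}}-\beta_{1}|_{Q_{1}},\,\alpha|_{Q_{2}}-\beta_{2}|_{Q_{2}},\,\alpha|_{Q_{3}}-\beta_{3}|_{Q_{3}})$ appearing in the long exact sequence of Proposition 3, that is, it lies in $\ker r=\mathrm{Im}\,f$. Since $f$ is injective, there is a unique $\gamma \in H^{1}(Z_{0},(\mathcal{O}_{\mathcal{Z}}(K)\otimes 2\mathcal{I}_{\tilde{\mathscr{F}}})_{0})$ with $f(\gamma)=(\alpha,(\beta_{i}))$.

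Finally I would verify reality and nondegeneracy. For reality, the normalization sequence, the connecting map $r$, and $r_{2}$ are all equivariant for the antiholomorphic involution $\sigma$; since $\alpha$ is real, $r_{2}^{-1}$ produces real $\beta_{i}$, and as $f$ is injective and equivariant the real pair $(\alpha,(\beta_{i}))$ has real preimage $\gamma$. For nondegeneracy, I note that on the open set $\tilde{Z}-\mathscr{Q}\subset Z_{0}$ the normalization is an isomorphism and $(\mathcal{O}_{\mathcal{Z}}(K)\otimes 2\mathcal{I}_{\tilde{\mathscr{F}}})_{0}$ restricts to $\mathcal{O}_{\tilde{Z}}(\pi^{*}K_{Z})$ there (Lemma 2 and Remark 2), so $\gamma$ restricts to $\alpha$ and hence $\gamma|_{l}=\alpha|_{l}\neq 0$ for every real twistor line $l\subset \tilde{Z}-\mathscr{Q}$ by Lemma 11. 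Likewise $\gamma$ restricts to $\beta_{i}$ on $\tilde{F}_{i}-Q_{i}$, so $\gamma|_{l}=\beta_{i}|_{l}\neq 0$ for every real twistor line $l\subset \tilde{F}_{i}-Q_{i}$ by Lemma 15 applied to the nonzero class $\beta_{i}$. This furnishes the desired nondegenerate real element.

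The step I expect to require the most care is the bookkeeping of the real structure: one must check that the normalization exact sequence and each of the maps $f$, $r$, and $r_{2}$ intertwine the induced involutions, so that a real $\alpha$ indeed yields real $\beta_{i}$ and a real $\gamma$, together with the routine but necessary identification of the restrictions $\gamma|_{\tilde{Z}-\mathscr{Q}}=\alpha$ and $\gamma|_{\tilde{F}_{i}-Q_{i}}=\beta_{i}$ via the normalization map. Once these identifications are in place, the nondegeneracy is immediate from Lemmas 11 and 15.
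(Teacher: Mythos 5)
Your proposal is correct and takes essentially the same route as the paper: both realize $\gamma$ through the kernel of $r$ in the normalization long exact sequence and deduce nonvanishing on all real twistor lines from the matching condition $\alpha|_{Q_{i}}=\beta_{i}|_{Q_{i}}$ together with Lemmas 11 and 15. The only difference is cosmetic: the paper picks an arbitrary real, not identically zero element of $\ker r\cong\mathbb{C}^{3}$ and runs a short case analysis over which component is nonzero, whereas you build the kernel element explicitly as $(\alpha,\,r_{2}^{-1}(\alpha|_{Q_{i}}))$ via Corollary 2, which enforces the matching condition and makes every component nonzero from the outset.
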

\begin{proof}

From the description of $H^{1}(Z_{0}, (\mathcal{O}_{\mathcal{Z}}(K)\otimes 2\mathcal{I}_{\tilde{\mathscr{F}}})_{0})$ in the following long exact sequence, 
\[0 \longrightarrow H^{1}(Z_{0}, (\mathcal{O}_{\mathcal{Z}}(K)\otimes 2\mathcal{I}_{\tilde{\mathscr{F}}})_{0})
\xrightarrow{f} H^{1}(\tilde{Z}, \mathcal{O}_{\tilde{Z}}(K_{\tilde{Z}}\otimes[-\mathscr{Q}]))\oplus 
H^{1}(\tilde{\mathscr{F}}, \mathcal{O}_{\tilde{\mathscr{F}}}(K_{\tilde{\mathscr{F}}}\otimes[3\mathscr{Q}]))\longrightarrow\]
\[\xrightarrow{r} H^{1}(\mathscr{Q}, \mathcal{O}_{\mathscr{Q}}(\pi^{*}K_{Z}|_{\mathscr{Q}}))\longrightarrow,\]
an element of $H^{1}(Z_{0}, (\mathcal{O}_{\mathcal{Z}}(K)\otimes 2\mathcal{I}_{\tilde{\mathscr{F}}})_{0})$ is given by the kernel of the map 
 $r=(\alpha|_{Q_{i}}-\beta_{i}|_{Q_{i}})$, where $\alpha\in H^{1}(\tilde{Z}, \mathcal{O}_{\tilde{Z}}(K_{\tilde{Z}}\otimes[-\mathscr{Q}]))$
and $\beta_{i}\in H^{1}(\tilde{F_{i}}, \mathcal{O}_{\tilde{F}_{i}}(K_{\tilde{F}_{i}}\otimes[3Q_{i}]))$. 
Since Ker $r=\mathbb{C}^{3}$, we take $(\alpha, \beta_{i})\in$ Ker $r$, which is real and not identically zero.

First, we assume that $\alpha$ is not identically zero. 
Then by Lemma 11, $\alpha_{l}\neq 0$ for any twistor line $l$ in $\tilde{Z}-\mathscr{Q}$ and $\alpha|_{Q_{i}}\neq 0$ for any $i$. 
Then we have $\alpha|_{Q_{i}}=\beta_{i}|_{Q_{i}}\neq 0$. 
By Lemma 15, $\beta_{i}|_{l}\neq 0$ for any twistor line in $\tilde{F_{i}}-Q_{i}$. 
If we assume $\beta_{j}$ is not identically zero for some $j$, then by Lemma 15, $\beta_{j}|_{l}\neq 0$ for any twistor line in $\tilde{F}_{j}-Q_{j}$
and $\beta_{j}|_{Q_{j}}\neq 0$. Then $\alpha|_{Q_{j}}\neq 0$. By the same argument using Lemma 11 and 15, we get 
$\alpha|_{l}\neq 0$ for any twistor line $l\in \tilde{Z}-\mathscr{Q}$ and $\beta_{i}|_{l}\neq 0$ for any twistor line $l\in \tilde{F}_{i}-Q_{i}$ for any $i$.

\end{proof}

We have shown that $h^{1}(Z_{0}, (\mathcal{O}_{\mathcal{Z}}(K)\otimes 2\mathcal{I}_{\tilde{\mathscr{F}}})_{0})=3$ for all $t$ including the singular fiber. 
Thus, by Theorem 5, a given element in $H^{1}(Z_{0}, (\mathcal{O}_{\mathcal{Z}}(K)\otimes 2\mathcal{I}_{\tilde{\mathscr{F}}})_{0})$ can be 
extended to nearby fiber. If we take an element given in Proposition 4, then we get a nondegenerate real element of $H^{1}(Z_{t}, \mathcal{O}_{Z_{t}}(K_{t}))$ for $t$ near $0$
since nondegeneracy is an open condition. 
Thus, we get a nondegenerate self-dual harmonic 2-form on $(K3\#3\overline{\mathbb{CP}_{2}}, g_{t})$
and therefore an almost-K\"ahler anti-self-dual metric in the conformal class of $g_{t}$. 
Let $\alpha\in H^{1}(\tilde{Z}, \pi^{*}K_{Z})$ and $\mathscr{Q}=\cup_{1\leq i\leq n} Q_{i}$ for any $n\geq 4$.
A self-dual harmonic 2-form corresponding to $\alpha|_{\tilde{Z}-\mathscr{Q}}$ can be extended to $K3$ by the argument of Lemma 11.
The same argument is easily extended to cover cases $K3\#n\overline{\mathbb{CP}_{2}}$ for $n\geq 4$. 
Since $K3\#3\overline{\mathbb{CP}_{2}}$ does not admit a scalar-flat K\"ahler metric, we get a strictly alnmost-K\"ahler anti-self-dual metric 
on $K3\#3\overline{\mathbb{CP}_{2}}$ for $n\geq 3$. This finishes the proof of Theorem 1.

\vspace{50pt}

\section{\large\textbf{Scalar curvatures of almost-K\"ahler anti-self-dual metrics }}\label{S:Intro}

Recall that the anti-self-duality is a conformal invariant. 
By the solution of Yamabe problem [22], each conformal class on a compact manifold of dim $\geq 3$ has a representative whose scalar curvature is constant. 
There are three types according to the sign of the scalar curvature. 
It is interesting to note the type of almost-K\"ahler anti-self-dual metrics on $K3\#n\overline{\mathbb{CP}_{2}}$ for $n\geq 3$.

From the Weitzenb\"ock formula for a self-dual 2-form, 
\[\Delta\omega=\nabla^{*}\nabla\omega-2W_{+}(\omega, \cdot)+\frac{s}{3}\omega,\]
if an anti-self-dual compact manifold $(M, g)$ is positive type, then $b_{+}(M)=0$ (Corollary 1, [27]).
Otherwise, there is a self-dual harmonic 2-form with respect to an an anti-self-dual metric on $M$ with constant positive scalar curvature. 
Then we have 
\[0=<\nabla^{*}\nabla\omega, \omega>+\frac{s}{3}<\omega, \omega>.\]
Then we get $\int_{M}\frac{s}{3}<\omega, \omega>\leq 0$, which is a contradiction. 
Since $K3\#n\overline{\mathbb{CP}_{2}}$ for $n\geq 3$ has self-dual harmonic 2-forms, 
an anti-self-dual conformal class on $K3\#n\overline{\mathbb{CP}_{2}}$ for $n\geq 3$ cannot be positive type. 

Moreover, from the following result in [19, Proposition 3.5], $K3\#n\overline{\mathbb{CP}_{2}}$ for $n\geq 3$ cannot admit an anti-self-dual metric with zero scalar curvature.

\begin{Proposition}
[19] Suppose $M$ be a smooth, oriented, compact four-dimensional manifold. 
If $M$ admits a scalar-flat anti-self-dual metric, then $M$ is homeomorphic to $k\overline{\mathbb{CP}_{2}}$ for $k\geq 5$ 
or $M$ is diffeomorphic to  $\mathbb{CP}_{2}\#n\overline{\mathbb{CP}_{2}}$ for $n\geq 10$, or diffeomorphic to $K3$ surface. 
\end{Proposition}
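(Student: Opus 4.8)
The plan is to split on the sign of $b_{+}(M)$ and, in the positive case, to manufacture a parallel symplectic form so as to reduce the problem to the classification of scalar-flat K\"ahler surfaces. First I would record the one inequality that holds in every case: since $g$ is scalar-flat ($s=0$) and anti-self-dual ($W_{+}=0$), the signature formula quoted in the introduction collapses to
\[(2\chi+3\tau)(M)=\frac{1}{4\pi^{2}}\int_{M}\Big(\frac{s^{2}}{24}+2|W_{+}|^{2}-\frac{|\dot{r}|^{2}}{2}\Big)d\mu_{g}=-\frac{1}{8\pi^{2}}\int_{M}|\dot{r}|^{2}d\mu_{g}\le 0,\]
with equality precisely when $g$ is Ricci-flat. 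This single estimate will drive all the numerical bounds.

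In the case $b_{+}=0$ the intersection form of $M$ is negative definite, so Donaldson's diagonalization theorem forces it to be $k\langle-1\rangle$; together with Freedman's theorem (after checking $b_{1}=0$) this gives that $M$ is homeomorphic to $k\overline{\mathbb{CP}_{2}}$. Substituting $\chi=2+k$ and $\tau=-k$ into the inequality above yields $4-k\le 0$, i.e. $k\ge 4$. The borderline $k=4$ would force equality, hence a Ricci-flat anti-self-dual metric; but the equality case of the Hitchin--Thorpe inequality shows such a metric must be (essentially) hyperk\"ahler, so $M$ would be a $K3$, a torus, or a quotient, none of which has the homotopy type of $4\overline{\mathbb{CP}_{2}}$. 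This excludes $k=4$ and leaves $k\ge 5$.

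In the case $b_{+}>0$ there is a nonzero self-dual harmonic $2$-form $\omega$, and by Lemma 9 (the Weitzenb\"ock formula with $s=W_{+}=0$) $\omega$ is parallel. A nonzero parallel self-dual $2$-form has constant length, hence is nowhere vanishing and pointwise of the form $e^{1}\wedge e^{2}+e^{3}\wedge e^{4}$, so the almost-complex structure $J$ defined by $\omega(\cdot,\cdot)=g(J\cdot,\cdot)$ is parallel and $(M,g,J)$ is K\"ahler; thus $g$ is a scalar-flat K\"ahler metric. For such a metric $\int_{M}s\,d\mu_{g}$ is a nonzero multiple of $c_{1}\cdot[\omega]$, so $c_{1}\cdot[\omega]=0$, and since $[\omega]^{2}>0$ the Hodge index theorem gives $c_{1}^{2}\le 0$ with equality iff $c_{1}=0$ in $H^{2}(M;\mathbb{R})$. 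I would then feed this into the Kodaira--Enriques classification together with the classification of scalar-flat K\"ahler surfaces: in the simply connected setting relevant here the surviving possibilities are the Ricci-flat case $c_{1}=0$, giving a $K3$ surface, and the rational case, where $M$ is a blow-up $\mathbb{CP}_{2}\#n\overline{\mathbb{CP}_{2}}$. In the latter $c_{1}^{2}=9-n$, so $c_{1}^{2}\le 0$ gives $n\ge 9$, and $n=9$ is excluded because $c_{1}^{2}=0$ would force $c_{1}=0$, i.e. a Ricci-flat (hence non-rational) surface; therefore $n\ge 10$.

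The hard part is the classification input in the $b_{+}>0$ branch, namely determining exactly which complex surfaces carry scalar-flat K\"ahler metrics and ruling out the K\"ahler types that do not appear in the stated list. This is where the genuine content sits and where one must lean on the full Kodaira--Enriques theory and the positivity/obstruction analysis for scalar-flat K\"ahler metrics developed in [19] and its references; by contrast the universal curvature inequality, Donaldson's theorem, and the Hitchin--Thorpe equality discussion are the comparatively routine bookkeeping that converts the classification into the precise numerical thresholds $k\ge 5$ and $n\ge 10$.
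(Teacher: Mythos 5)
The paper does not actually prove this statement: Proposition 5 is quoted verbatim from [19] (LeBrun's survey on curvature functionals), so the only available comparison is with the argument known to underlie it, and your skeleton is indeed that argument — split on $b_{+}$; in the definite case use Donaldson diagonalization, Freedman, and the Gauss--Bonnet/signature bookkeeping $(2\chi+3\tau)(M)\le 0$ with the equality case killed by Hitchin--Thorpe; in the case $b_{+}>0$ use the Weitzenb\"ock formula to produce a parallel self-dual form, conclude the metric is scalar-flat K\"ahler, and invoke the classification of scalar-flat K\"ahler surfaces, with $n\ge 10$ following from $c_{1}\cdot[\omega]=0$ and the Hodge index theorem. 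That much of your write-up is correct, and the numerical thresholds $k\ge 5$ and $n\ge 10$ are derived correctly granted what precedes them.

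The genuine gap is the fundamental group, and it appears in both branches. Freedman's theorem requires $\pi_{1}(M)=1$, not merely $b_{1}(M)=0$, and neither condition follows from the hypotheses as stated; your parentheticals ``(after checking $b_{1}=0$)'' and ``in the simply connected setting relevant here'' are placeholders for exactly the content that is missing. Indeed, from the bare hypotheses the conclusion is unreachable by your route: the product of the Hantzsche--Wendt flat $3$-manifold with a circle is a compact oriented flat $4$-manifold whose holonomy fixes no self-dual $2$-form, hence a scalar-flat anti-self-dual manifold with $b_{+}=0$ that is homeomorphic to nothing on the list; and in the $b_{+}>0$ branch, scalar-flat K\"ahler surfaces include $T^{4}$, Enriques surfaces, and blow-ups of ruled surfaces over curves of genus $\ge 2$ (such metrics exist by the very references [15, 16, 25] cited in this paper), again none on the list. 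So the reduction to ``$K3$ or rational'' requires a simple-connectivity (or equivalent) hypothesis implicit in the statement's context in [19], or an argument you do not supply. Separately, you explicitly defer the classification of scalar-flat K\"ahler surfaces — which you rightly call the hard part — to [19] and its references, so the attempt is an accurate reconstruction of the proof's architecture rather than a proof: the two load-bearing inputs (the $\pi_{1}$ control and the scalar-flat K\"ahler classification) are both left open, and the first of these is not even true without strengthening the hypotheses as quoted. Note also that your computation $\chi=2+k$ in the definite case silently uses $b_{1}=0$ a second time.
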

Thus, we can conclude that an anti-self-dual conformal class  on $K3\#3\overline{\mathbb{CP}_{2}}$ for $n\geq 3$ is negative type. 
The following result was proven in case $b_{1}(M)=0$ in [27] and in general in [9]. 
\begin{Theorem}
[9, 27] Suppose $(M, c)$ be a compact, oriented anti-self-dual conformal manifold and its conformal class
contains a metric of constant negative scalar curvature. 
Then the corresponding twistor space does not have a nontrivial divisor. 

\end{Theorem}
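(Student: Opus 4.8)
The plan is to argue by contradiction: assuming a nontrivial effective divisor $D\subset Z$ exists, I would transform its defining section into a nonzero solution of a conformally invariant twistor equation on $M$ valued in a positive spinor bundle, and then annihilate that solution by a Weitzenb\"ock argument that exploits the sign of the scalar curvature. First I would make $D$ real. Since $\sigma$ is anti-holomorphic, $\sigma(D)$ is again effective, so $D+\sigma(D)$ is a nonzero $\sigma$-invariant effective divisor; replace $D$ by it. Let $\ell\cong\mathbb{CP}_{1}$ be a generic real twistor line. Because $D$ is a complex surface while the twistor lines sweep out $Z$ in a four-real-parameter family of curves, the generic $\ell$ is not contained in $D$, so $D\cap\ell$ is finite and $d:=\deg(\mathcal{O}(D)|_{\ell})=D\cdot\ell\geq1$, an integer independent of $\ell$. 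As $\sigma$ acts on $\ell$ as the fixed-point-free antipodal map and $D$ is real, $D\cap\ell$ is a $\sigma$-invariant set with no fixed points, so $d$ is even; hence $d\geq2$ and, being even, $\mathcal{O}(d)$ is a genuine real line bundle on $Z$. Writing $L=\mathcal{O}(D)$, the defining section gives $0\neq s_{D}\in H^{0}(Z,L)$ with $L|_{\ell}=\mathcal{O}(d)$.

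Next I would apply the Penrose--Ward transform, the $H^{0}$-analogue of Theorem 3. Since $L|_{\ell}=\mathcal{O}(d)$, we may write $L=\mathcal{O}(d)\otimes\mathcal{L}$ where $\mathcal{L}$ is trivial on every twistor line; by the Ward correspondence $\mathcal{L}$ (which is real, as $L$ and $\mathcal{O}(d)$ are) corresponds to a real anti-self-dual $U(1)$-connection on $M$, whose curvature $F$ lies in $\Lambda^{-}$. Pushing $s_{D}$ down fibrewise identifies it with a nonzero section $\phi\in\Gamma(\mathbf{S}^{d}_{+}\otimes\mathcal{L})$ in the kernel of the conformally invariant twistor operator
\[\mathcal{T}_{d}:\Gamma(\mathbf{S}^{d}_{+}\otimes\mathcal{L})\longrightarrow\Gamma(\mathbf{S}^{d+1}_{+}\otimes\mathbf{S}_{-}\otimes\mathcal{L}),\]
the overdetermined-elliptic ``higher'' factor among the first-order conformally invariant operators out of $\mathbf{S}^{d}_{+}$ (the complement of the operator $\mathbf{D}_{d}$ of Theorem 3). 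As everything is conformally invariant, I am free to compute in the metric $g\in c$ with constant scalar curvature $s\equiv\mathrm{const}<0$ furnished by hypothesis.

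Finally I would run the Weitzenb\"ock computation. The operator $\mathcal{T}_{d}$ obeys an identity of the shape $\mathcal{T}_{d}^{*}\mathcal{T}_{d}=a_{d}\,\nabla^{*}\nabla+\mathcal{R}$ with $a_{d}>0$, where $\mathcal{R}$ is the action of the curvature endomorphism on $\mathbf{S}^{d}_{+}\otimes\mathcal{L}$. The decisive simplification is that $\mathbf{S}^{d}_{+}$ is built only from the positive spinor bundle $\mathbf{V}_{+}$, on which the Riemann curvature sees only its self-dual block $W_{+}+\tfrac{s}{12}$; furthermore the twisting curvature $F\in\Lambda^{-}$ acts trivially on every $\mathbf{V}_{+}$-factor, and $W_{+}=0$ by anti-self-duality. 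Hence $\mathcal{R}=-b_{d}\,s\cdot\mathrm{id}$ for some constant $b_{d}>0$ (with the same sign convention that makes $\mathcal{T}_{d}^{*}\mathcal{T}_{d}=\tfrac{3}{4}(\nabla^{*}\nabla-\tfrac{s}{12})$ in the twistor-spinor case $d=1$), and for $\phi\in\ker\mathcal{T}_{d}$ integration by parts gives
\[0=\|\mathcal{T}_{d}\phi\|^{2}=\int_{M}\big(a_{d}\,|\nabla\phi|^{2}-b_{d}\,s\,|\phi|^{2}\big)\,d\mu_{g}.\]
Since $s$ is a negative constant, $-b_{d}\,s\,|\phi|^{2}\geq0$, strictly positive wherever $\phi\neq0$, while $a_{d}|\nabla\phi|^{2}\geq0$; both integrands vanish only if $\phi\equiv0$, contradicting $\phi\neq0$. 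Therefore $Z$ admits no nontrivial divisor.

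The main obstacle I anticipate lies in the middle step: verifying that $H^{0}(Z,L)$ transforms precisely onto $\ker\mathcal{T}_{d}$ rather than onto solutions of some coupled equation carrying an unwanted background term, and, above all, pinning down the Weitzenb\"ock constant with the sign that lets $s<0$ force vanishing for \emph{every} even $d\geq2$. The observation that the twisting bundle $\mathcal{L}$ has anti-self-dual curvature, which acts trivially on the positive-spinor factor, is exactly what keeps the sign robust; the remaining delicate point is confirming that $\mathcal{R}$ collapses to a positive multiple of $-s$, together with the separate dimension count needed to exclude divisor components that are themselves twistor lines.
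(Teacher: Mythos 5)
The paper never proves this statement: it appears as Theorem 7 quoted from the literature, with the case $b_{1}(M)=0$ attributed to Ville [27] and the general case to Gauduchon [9], so there is no internal proof to measure you against. What you have written is essentially a reconstruction of the standard vanishing argument behind those references, and its skeleton is sound: symmetrizing to $D+\sigma(D)$ to make the divisor real; noting that $d=\deg(\mathcal{O}(D)|_{\ell})$ is the same for every twistor line (the lines are all homologous), is positive because the lines sweep out $Z$ while a generic real line is not contained in the $4$-real-dimensional set $D$, and is even because $\sigma|_{\ell}$ is the fixed-point-free antipodal map preserving multiplicities, so that $\mathcal{O}(d)=K_{Z}^{-d/4}$ exists globally (the paper's $\mathbf{H}^{2}$ exists globally); factoring $\mathcal{O}(D)=\mathcal{O}(d)\otimes\mathcal{L}$ with $\mathcal{L}$ of degree zero, hence trivial, on every line, so the Ward correspondence converts $\mathcal{L}$ into a line bundle with anti-self-dual connection on $M$; and running the Weitzenb\"ock identity in the constant-scalar-curvature gauge, where $W_{+}=0$ and $F^{+}=0$ leave only the scalar term, whose sign forces $\phi\equiv 0$ when $s<0$. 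Your $d=1$ normalization $\mathcal{T}^{*}\mathcal{T}=\frac{3}{4}\left(\nabla^{*}\nabla-\frac{s}{12}\right)$ is correct, and the mechanism by which the ASD twisting curvature drops out of the curvature term on $\mathbf{S}^{d}_{+}\otimes\mathcal{L}$ is exactly the right one (it is the same mechanism as in Hitchin's vanishing theorems).

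That said, the two steps you yourself flag are precisely where all the substance lies, and in your text they are asserted rather than established: the identification of $H^{0}(Z,\mathcal{O}(d)\otimes\mathcal{L})$ with $\ker\mathcal{T}_{d}$ (the twisted $H^{0}$ Penrose transform, which must be taken from [7], [11]) and the positivity of the Weitzenb\"ock constants $a_{d}$, $b_{d}$ for \emph{every} even $d\geq 2$, not just $d=1$; without citing or proving these, the argument is an outline, not a proof. Two smaller corrections: your closing worry about ``divisor components that are themselves twistor lines'' is vacuous, since every component of a divisor in the $3$-fold $Z$ has complex dimension two --- the genuine issue, namely lines \emph{contained} in $D$, is already disposed of by your genericity count; and you should state explicitly that reality of $\mathcal{L}$ yields a unitary connection with real curvature, since the integration by parts requires a Hermitian structure compatible with $\nabla$. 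Finally, be aware that Gauduchon's proof in [9] is of a genuinely different flavor --- a degree computation for holomorphic line bundles in the Gauduchon/Weyl gauge rather than a Weitzenb\"ock vanishing --- and that this is what removed the $b_{1}(M)=0$ hypothesis of [27]; in your route the place where $b_{1}$ could intrude is the gauge-theoretic discussion of $\mathcal{L}$, so if you intend your argument to cover the general case you should verify that step without any simple-connectivity assumption.
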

From this, we get the twistor space $Z$ of $(K3\#n\overline{\mathbb{CP}_{2}}, g)$ for $n\geq 3$ does not admit a nontrivial divisor,
where $g$ is an anti-self-dual metric with negative type. 
This is Corollary 2 in [27].

\vspace{20pt}

Note that the property of $K3$-surface we need in this paper in the construction of almost-K\"ahler anti-self-dual metrics is that
$b_{+}\neq 0$ and the metric is anti-self-dual and has vanishing scalar curvature. 
Then among the list given in Proposition 5, $\mathbb{CP}_{2}\#n\overline{\mathbb{CP}_{2}}$ for $n\geq 10$ with scalar-flat K\"ahler metrics have these properties. 
Thus, instead of $K3$ surface, we may use $\mathbb{CP}_{2}\#n\overline{\mathbb{CP}_{2}}$ for $n\geq 10$. 

We note that it was shown that $\mathbb{CP}_{2}\#n\overline{\mathbb{CP}_{2}}$ for $n\geq14$ admit scalar-flat K\"ahler metrics
by twistor method [15, 16].
The optimal case $\mathbb{CP}_{2}\#n\overline{\mathbb{CP}_{2}}$ for $n=10$ was successful using gluing method in [25]. 

\begin{Theorem}
[20] Suppose $M$ be a compact scalar-flat K\"ahler surface such that $c_{1}\neq 0$. 
Let $Z$ be its twistor space and $D$ be the corresponding divisor. 
Suppose $M$ be not a minimal ruled surface of genus $\gamma\geq 2$ such that $H^{0}(M, \Theta_{M})\neq 0$ 
and $M$ be not of the form $\mathbb{P}(L\oplus \mathcal{O})\to S_{\gamma}$, where $S_{\gamma}$ is a riemann surface of genus $\gamma\geq 2$. 
Then $H^{2}(Z, \Theta_{Z}\otimes\mathcal{I}_{D})=0$. 

\end{Theorem}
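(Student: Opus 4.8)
The plan is to use Pontecorvo's description of the twistor space of a scalar-flat K\"ahler surface and to reduce the vanishing on the three-fold $Z$ to a cohomological statement on the surface $M$ itself. Recall that for a scalar-flat K\"ahler metric the two sections $\pm J$ of the twistor fibration produce two disjoint smooth divisors $S,\bar{S}\subset Z$, with $S\cong(M,J)$ and $\bar{S}\cong\overline{(M,J)}$, and the relevant divisor is $D=S+\bar{S}$, lying in the half-anticanonical system $|\mathcal{O}(2)|=|K^{-1/2}|$. In particular $\mathcal{I}_{D}=\mathcal{O}_{Z}(-D)$ is invertible, so $\Theta_{Z}\otimes\mathcal{I}_{D}=\Theta_{Z}(-D)$ is locally free and Serre duality applies on the compact complex $3$-fold $Z$. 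Since $K=\mathcal{O}(-4)$ and $\mathcal{O}(D)=\mathcal{O}(2)$, this gives
\[
H^{2}(Z,\Theta_{Z}\otimes\mathcal{I}_{D})\cong H^{1}(Z,\Omega^{1}_{Z}\otimes K\otimes\mathcal{O}(D))^{*}=H^{1}(Z,\Omega^{1}_{Z}\otimes\mathcal{O}(-2))^{*},
\]
and because $\mathcal{O}(-2)=\mathcal{O}(-D)$ this is $H^{1}(Z,\Omega^{1}_{Z}\otimes\mathcal{I}_{D})^{*}$. So it suffices to prove $H^{1}(Z,\Omega^{1}_{Z}(-D))=0$.

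Next I would peel off the divisor. Since $S,\bar{S}$ are disjoint and smooth, $\mathcal{O}_{D}=\mathcal{O}_{S}\oplus\mathcal{O}_{\bar{S}}$ and the ideal-sheaf sequence reads
\[
0\longrightarrow\Omega^{1}_{Z}(-D)\longrightarrow\Omega^{1}_{Z}\longrightarrow\Omega^{1}_{Z}|_{S}\oplus\Omega^{1}_{Z}|_{\bar{S}}\longrightarrow0.
\]
On the boundary terms I would use the conormal sequence $0\to N_{S}^{*}\to\Omega^{1}_{Z}|_{S}\to\Omega^{1}_{S}\to0$ and its analogue on $\bar{S}$; here $S\cong(M,J)$, so $\Omega^{1}_{S}=\Omega^{1}_{M}$, while adjunction ($\mathcal{O}(2)|_{S}=N_{S}$ together with $K=\mathcal{O}(-4)$ forces $K_{M}=K_{S}=N_{S}^{-1}$) identifies the conormal bundle as $N_{S}^{*}\cong K_{M}$. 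In this way every term involving $S$ or $\bar{S}$ is rewritten through $H^{*}(M,\Omega^{1}_{M})$ and $H^{*}(M,K_{M})$, while the ambient terms $H^{*}(Z,\Omega^{1}_{Z})$ are standard twistor Hodge groups computed from $H^{*}(M)$ (in particular $H^{0}(Z,\Omega^{1}_{Z})=0$). Chasing the resulting long exact sequence reduces the required vanishing to that of a short list of cohomology groups on the complex surface $M$.

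The decisive surface input comes from the scalar-flat K\"ahler hypothesis together with $c_{1}\neq0$. Scalar-flatness gives $c_{1}\cdot[\omega]=\tfrac{1}{4\pi}\int_{M}s\,\omega=0$, so by the Hodge index theorem $c_{1}$ is a nonzero class of nonpositive square orthogonal to the K\"ahler class; this pins down the admissible surface types and makes the relevant $H^{2}$-terms computable. The one group that can genuinely survive is the space of holomorphic vector fields: by Serre duality on $M$ the surviving obstruction is measured by $H^{0}(M,\Theta_{M})$ and by how its contribution among the boundary terms is matched by the ambient restriction map. I would show that for a scalar-flat K\"ahler surface with $c_{1}\neq0$ each holomorphic vector field of $(M,J)$ is already accounted for by an infinitesimal automorphism of $Z$, so that the connecting homomorphisms annihilate the would-be obstruction and the long exact sequence forces $H^{1}(Z,\Omega^{1}_{Z}(-D))=0$.

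The hard part is not the homological algebra but the geometric step of isolating the two excluded families. The reduction shows that $H^{2}(Z,\Theta_{Z}\otimes\mathcal{I}_{D})$ can fail to vanish only when $H^{0}(M,\Theta_{M})$ produces an unmatched class, i.e.\ when $M$ carries holomorphic vector fields not absorbed by the scalar-flat geometry. These occur exactly for the minimal ruled surfaces over a curve of genus $\gamma\geq2$ with $H^{0}(M,\Theta_{M})\neq0$, and for the projective bundles $\mathbb{P}(L\oplus\mathcal{O})\to S_{\gamma}$, whose fibrewise $\mathbb{C}^{*}$-action furnishes an extra field. Thus the delicate point is the classification argument verifying that, once these two families are removed, every element of $H^{0}(M,\Theta_{M})$ is matched and the obstruction $H^{2}(Z,\Theta_{Z}\otimes\mathcal{I}_{D})$ vanishes.
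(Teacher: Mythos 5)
You should first note that the paper contains no proof of this statement at all: it is Theorem 8, quoted from LeBrun--Singer [20] as a known input, so the only meaningful comparison is with the argument in that reference. Your opening reductions do match its framework and are correct as far as they go: $D=S+\bar{S}\in|K^{-1/2}|$ with $S\cong(M,J)$ disjoint from $\bar{S}$, Serre duality giving $H^{2}(Z,\Theta_{Z}\otimes\mathcal{I}_{D})^{*}\cong H^{1}(Z,\Omega^{1}_{Z}(-2))=H^{1}(Z,\Omega^{1}_{Z}(-D))$, and the adjunction identifications $N_{S}^{*}\cong K_{M}$, $K_{S}=K_{Z}^{1/2}|_{S}$. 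But from that point on what you have written is a plan, not a proof: the decisive steps are introduced with ``I would show that\ldots'' and you concede that ``the delicate point is the classification argument.'' That classification is the entire content of the theorem. One needs LeBrun's structure theorem that a compact scalar-flat K\"ahler surface with $c_{1}\neq0$ is a blow-up of a minimal ruled surface over a curve of genus $\gamma\geq2$, and then a case analysis determining exactly which of these carry holomorphic vector fields contributing an unkillable class; the hypotheses excluding the two families are never actually used anywhere in your exact-sequence chase, which is a sure sign the proof is not there. (Note also that $\mathbb{P}(L\oplus\mathcal{O})\to S_{\gamma}$ is itself a minimal ruled surface with $H^{0}(M,\Theta_{M})\neq0$, so your closing claim that the unmatched fields occur ``exactly'' for the two listed families is looser than the statement being proved.)

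There is also a concrete technical failure in the chase as you set it up. Since $H^{0}(Z,\Omega^{1}_{Z})=0$, your sequence $0\to\Omega^{1}_{Z}(-D)\to\Omega^{1}_{Z}\to\Omega^{1}_{Z}|_{D}\to0$ forces an injection $H^{0}(D,\Omega^{1}_{Z}|_{D})\hookrightarrow H^{1}(Z,\Omega^{1}_{Z}(-D))$, so the vanishing you want already requires $H^{0}(D,\Omega^{1}_{Z}|_{D})=0$. Via the conormal sequence and $H^{0}(M,K_{M})=0$, this boundary group sits inside $H^{0}(M,\Omega^{1}_{M})\oplus H^{0}(\bar{M},\Omega^{1}_{\bar{M}})$, which has dimension up to $2\gamma\neq0$ precisely in the allowed cases (ruled surfaces over a base of genus $\gamma\geq2$ with no holomorphic vector fields). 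So you must prove that the connecting map $H^{0}(M,\Omega^{1}_{M})\to H^{1}(M,K_{M})$ is injective, a nontrivial assertion about the extension class of the conormal sequence that your sketch never addresses; this is exactly the kind of Penrose-transform computation LeBrun--Singer carry out and you omit. Relatedly, your claim that ``by Serre duality on $M$ the surviving obstruction is measured by $H^{0}(M,\Theta_{M})$'' does not follow from your own sequences: the untwisted restriction produces only the groups $H^{q}(M,K_{M})$ and $H^{q}(M,\Omega^{1}_{M})$, whose Serre duals are $H^{2-q}(M,\mathcal{O}_{M})$ and $H^{2-q}(M,\Omega^{1}_{M})$, never $H^{0}(M,\Theta_{M})$. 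That group, being dual to $H^{2}(M,\Omega^{1}_{M}\otimes K_{M})$, appears only after restricting the twisted sheaf $\Omega^{1}_{Z}(-2)$ to $D$ (using $\mathcal{O}(-2)|_{S}\cong K_{M}$), i.e., in a twisted restriction sequence you never write down. In short, the framework is the right one and consistent with [20], but every step at which the hypotheses $c_{1}\neq0$ and the two exclusions actually enter is missing.
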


\begin{Proposition}
[20, 14] Suppose $M$ be a compact scalar-flat K\"ahler surface such that $c_{1}\neq 0$. 
Let $Z$ be its twistor space with the corresponding divisor $D$. 
If  $H^{2}(Z, \Theta_{Z}\otimes\mathcal{I}_{D})=0$, then $H^{2}(Z, \Theta_{Z})=0$.

\end{Proposition}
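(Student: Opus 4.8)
The plan is to relate the two cohomology groups through the restriction exact sequence to the divisor $D$. Since $\Theta_Z$ is locally free, tensoring the ideal-sheaf sequence $0\to\mathcal I_D\to\mathcal O_Z\to\mathcal O_D\to 0$ by $\Theta_Z$ keeps it exact and produces
\[0\longrightarrow \Theta_Z\otimes\mathcal I_D\longrightarrow \Theta_Z\longrightarrow \Theta_Z|_D\longrightarrow 0 .\]
The associated long exact sequence contains the segment $H^2(Z,\Theta_Z\otimes\mathcal I_D)\to H^2(Z,\Theta_Z)\to H^2(D,\Theta_Z|_D)$. By hypothesis the first term vanishes, so $H^2(Z,\Theta_Z)$ injects into $H^2(D,\Theta_Z|_D)$, and it suffices to prove that this last group is zero.

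Next I would exploit the structure of the divisor attached to a scalar-flat K\"ahler metric (Pontecorvo's description): $D=S\sqcup\bar S$ is a disjoint union of two smooth surfaces interchanged by the real structure $\sigma$, where $S$ is biholomorphic to $(M,J)$ and $\bar S$ to its conjugate, realized as the sections $x\mapsto J_x$ and $x\mapsto -J_x$ of the twistor fibration; they are disjoint because $J_x$ and $-J_x$ are antipodal on each twistor line. Hence $\Theta_Z|_D=\Theta_Z|_S\oplus\Theta_Z|_{\bar S}$, and since $\sigma$ relates the two summands conjugate-linearly it is enough to show $H^2(S,\Theta_Z|_S)=0$. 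For this I would use the normal-bundle sequence $0\to\Theta_S\to\Theta_Z|_S\to N_{S/Z}\to 0$, which reduces the vanishing to $H^2(S,\Theta_S)=0$ and $H^2(S,N_{S/Z})=0$ (the term $H^3$ of a sheaf on a surface being automatically zero). The crucial input is the identification of the normal bundle: since $[D]=\mathcal O(2)=-\tfrac12 K_Z$ and $\bar S$ is disjoint from $S$, restriction gives $N_{S/Z}=\mathcal O(S)|_S=-\tfrac12 K_Z|_S$, and combining this with adjunction $K_S=(K_Z+S)|_S$ yields $K_Z|_S=2K_S$ and therefore $N_{S/Z}\cong K_S^{-1}$.

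Finally I would invoke Serre duality on $S\cong M$ together with the Kodaira-dimension constraint. Because $M$ is scalar-flat, $\int_{M}s\,d\mu_{g}=0\ge 0$, and $c_1^{\mathbb R}\neq 0$, the cited result forces the Kodaira dimension of $M$ to be $-\infty$; hence all plurigenera vanish, $H^0(S,K_S^{\otimes m})=0$ for $m\ge 1$. Consequently $H^2(S,N_{S/Z})=H^2(S,K_S^{-1})\cong H^0(S,K_S^{\otimes 2})^{*}=0$, while $H^2(S,\Theta_S)\cong H^0(S,\Omega^1_S\otimes K_S)^{*}$ vanishes because a surface of Kodaira dimension $-\infty$ is uniruled: restricting a $K_S$-valued holomorphic $1$-form to a fibre $f\cong\mathbb{P}^1$ of a ruling, one finds $(\Omega^1_S\otimes K_S)|_f$ sits in an extension of $\mathcal O(-4)$ by $\mathcal O(-2)$ and so has no sections, which forces the form to vanish on the rational curves covering $S$, hence identically. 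This gives $H^2(S,\Theta_Z|_S)=0$, and by conjugation $H^2(D,\Theta_Z|_D)=0$, completing the argument.

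The main obstacle I anticipate is twofold and lies in the two middle steps: pinning down the normal bundle $N_{S/Z}\cong K_S^{-1}$ from the divisor class and adjunction, and then establishing $H^2(S,\Theta_S)=0$ uniformly for every scalar-flat K\"ahler surface with $c_1^{\mathbb R}\neq 0$. The delicate case is that of surfaces ruled over a curve of positive genus, where holomorphic vector fields and the precise geometry of the ruling must be controlled to guarantee that no $K_S$-valued $1$-form survives.
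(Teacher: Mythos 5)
Your argument is correct and is essentially the proof in the cited sources [20, 14] that the paper itself invokes without reproducing: tensor $0\to\mathcal{I}_{D}\to\mathcal{O}_{Z}\to\mathcal{O}_{D}\to 0$ by $\Theta_{Z}$, use Pontecorvo's description $D=S\sqcup\bar{S}$ with $S\cong M$ and $\mathcal{O}_Z(D)=K_{Z}^{-1/2}$ to get $N_{S/Z}\cong K_{S}^{-1}$ by adjunction, and kill $H^{2}(D,\Theta_{Z}|_{D})$ via Serre duality ($H^{0}(S,2K_{S})=H^{0}(S,\Omega^{1}_{S}\otimes K_{S})=0$) from Kodaira dimension $-\infty$. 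The one caveat you should make explicit is that the final step requires $c_{1}^{\mathbb{R}}\neq 0$, as in [20], rather than the literal $c_{1}\neq 0$: for torsion $c_{1}$ a scalar-flat K\"ahler metric is Ricci-flat (e.g. an Enriques surface), where $2K_{S}=0$ gives $H^{2}(S,K_{S}^{-1})\cong H^{0}(S,2K_{S})^{*}\neq 0$ and your vanishing of $H^{2}(D,\Theta_{Z}|_{D})$ breaks down, so the hypothesis must be read in real cohomology as the paper's own citation of Yau's theorem ($c_{1}^{\mathbb{R}}=0$ or Kodaira dimension $-\infty$) indicates.
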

Let $g$ be a scalar-flat K\"ahler metric on $\mathbb{CP}_{2}\#n\overline{\mathbb{CP}_{2}}$ for $n\geq10$.
Then by Theorem 8 and Proposition 6, the twistor space of $(\mathbb{CP}_{2}\#n\overline{\mathbb{CP}_{2}}, g)$
has $H^{2}(Z, \Theta_{Z})=0$. Thus, Donaldson-Friedman construction can be applied to the pair 
$(\mathbb{CP}_{2}\#n\overline{\mathbb{CP}_{2}}, g)$ for $n\geq10$ and $(\overline{\mathbb{CP}_{2}}, g_{FS})$.
Moreveor, our construction of nondegenerate self-dual harmonic 2-form also applies in these cases. 
The existence of strictly almost-K\"ahler anti-self-dual metrics on $\mathbb{CP}_{2}\#n\overline{\mathbb{CP}_{2}}$ for $n\geq 11$
 is already shown by deforming scalar-flat K\"ahler metrics [14]. 
The method of showing existence of almost-K\"ahler anti-self-dual metrics on such manifolds in this paper is different from this case.
On the other hand, since $\mathbb{CP}_{2}\#n\overline{\mathbb{CP}_{2}}$ admit scalar-flat K\"ahler metrics unlike $K3\#n\overline{\mathbb{CP}_{2}}$, 
we can only state the theorem in the following way.

\begin{Theorem}
There is an almost-K\"ahler anti-self-dual metric on $\mathbb{CP}_{2}\#n\overline{\mathbb{CP}_{2}}$ for $n\geq 11$. 
\end{Theorem}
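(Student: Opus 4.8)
The plan is to repeat the proof of Theorem 1 essentially verbatim, replacing the Ricci-flat K3 surface by a scalar-flat K\"ahler surface $\mathbb{CP}_{2}\#(n-1)\overline{\mathbb{CP}_{2}}$ and gluing in a single copy of $(\overline{\mathbb{CP}_{2}}, g_{FS})$. Fix $n\geq 11$ and set $m=n-1\geq 10$. By [25] (and [15,16]) the manifold $\mathbb{CP}_{2}\#m\overline{\mathbb{CP}_{2}}$ carries a scalar-flat K\"ahler metric $g$; since for a K\"ahler metric $W_{+}$ is the displayed multiple of $s$, the condition $s\equiv 0$ forces $W_{+}=0$, so $g$ is anti-self-dual, and $b_{+}=1\neq 0$. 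These are precisely the three properties of the K3 surface ($b_{+}\neq 0$, anti-self-dual, $s=W_{+}=0$) on which Sections 3 and 4 relied, so I expect the whole machinery to carry over.

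First I would manufacture the smoothing. Because $c_{1}\neq 0$ on $\mathbb{CP}_{2}\#m\overline{\mathbb{CP}_{2}}$, and this surface is neither a minimal ruled surface of genus $\gamma\geq 2$ nor of the form $\mathbb{P}(L\oplus\mathcal{O})\to S_{\gamma}$, Theorem 8 gives $H^{2}(Z, \Theta_{Z}\otimes\mathcal{I}_{D})=0$ for the twistor space $Z$ of $(\mathbb{CP}_{2}\#m\overline{\mathbb{CP}_{2}}, g)$, and Proposition 6 then yields $H^{2}(Z, \Theta_{Z})=0$. Since the Fubini-Study twistor space $F$ also satisfies $H^{2}(F, \Theta_{F})=0$, the Donaldson-Friedman construction applies to the pair: blowing up one twistor line $l\subset Z$ to get $\tilde{Z}$, gluing it to the blown-up flag manifold $\tilde{F}$ along a single $Q\cong\mathbb{CP}_{1}\times\mathbb{CP}_{1}$, and smoothing $Z_{0}=\tilde{Z}\cup_{Q}\tilde{F}$ produces a standard deformation $\varpi:\mathcal{Z}\to\mathcal{U}$ whose nonzero real fibers are twistor spaces of anti-self-dual metrics $g_{t}$ on $\mathbb{CP}_{2}\#n\overline{\mathbb{CP}_{2}}$.

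Next I would run the cohomology count, now with one $Q$ rather than $\mathscr{Q}=Q_{1}\cup Q_{2}\cup Q_{3}$. The analog of Lemma 5 gives $h^{1}(\tilde{Z}, \mathcal{O}_{\tilde{Z}}(\pi^{*}K_{Z}))=h^{1}(Z, \mathcal{O}_{Z}(K_{Z}))=b_{+}=1$, and Lemma 11 transfers word for word, its only inputs being that $\pi^{*}K_{Z}$ is nontrivial along the twistor direction and that the base has $s=W_{+}=0$; hence $r_{1}$ is injective. The flag-manifold results Lemma 12 through Corollary 2 and Lemma 15 are assertions about the single space $\tilde{F}$ and are thus available unchanged; in particular $r_{2}:H^{1}(\tilde{F}, \mathcal{O}_{\tilde{F}}(K_{\tilde{F}}\otimes 3[Q]))\to H^{1}(Q, \mathcal{O}_{Q}(-4,0))$ is an isomorphism onto $\mathbb{C}^{3}$. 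In the normalization sequence for $(\mathcal{O}_{\mathcal{Z}}(K)\otimes 2\mathcal{I}_{\tilde{F}})_{0}$ the map $r$ therefore has image equal to $r_{2}(B)=\mathbb{C}^{3}$, which forces $h^{1}(Z_{0}, (\mathcal{O}_{\mathcal{Z}}(K)\otimes 2\mathcal{I}_{\tilde{F}})_{0})=\dim\ker r=1=b_{+}(Z_{t})$. So $h^{1}$ is constant in $t$, and Theorem 5 extends a given real class to the nearby smooth fibers.

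Finally, for nondegeneracy, every nonzero real element of $\ker r$ has a nonzero $\tilde{Z}$-component $\alpha$, since $r_{1}$ is injective and $r_{2}$ an isomorphism; by Lemma 11 its restriction is nonvanishing on each real twistor line of $\tilde{Z}-Q$ and, through the parallel-form argument (Lemmas 9 and 10 and Remark 3, valid because the base is scalar-flat K\"ahler), it extends to a nondegenerate parallel self-dual $2$-form on $\mathbb{CP}_{2}\#m\overline{\mathbb{CP}_{2}}$, while Lemma 15 covers the $\tilde{F}$-side lines. Extending by Theorem 5 and using that nondegeneracy is open, I would obtain a nondegenerate real self-dual harmonic $2$-form on $(\mathbb{CP}_{2}\#n\overline{\mathbb{CP}_{2}}, g_{t})$ for small $t\neq 0$, hence an almost-K\"ahler anti-self-dual metric in its conformal class. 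The step I expect to demand the most care is the vanishing $H^{2}(Z, \Theta_{Z})=0$, that is, checking the chosen scalar-flat K\"ahler surfaces truly avoid the exceptional list of Theorem 8 for all $m\geq 10$, since the whole smoothing rests on it; the rest transfers cleanly precisely because the flag-manifold computations are intrinsic to $\tilde{F}$, and because surjectivity of $r$ is now supplied by the $\tilde{F}$-side isomorphism rather than, as in the K3 case, by $r_{1}$. Since $\mathbb{CP}_{2}\#n\overline{\mathbb{CP}_{2}}$ itself admits scalar-flat K\"ahler metrics, the conclusion cannot be upgraded to \emph{strictly} almost-K\"ahler, which is why the statement takes the given form.
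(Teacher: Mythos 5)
Your proposal is correct and follows essentially the same route as the paper: the paper proves this theorem precisely by observing that the only properties of K3 used in Sections 3--4 were $b_{+}\neq 0$, anti-self-duality, and $s=W_{+}=0$, by invoking Theorem 8 and Proposition 6 to get $H^{2}(Z, \Theta_{Z})=0$ for the twistor space of a scalar-flat K\"ahler $\mathbb{CP}_{2}\#m\overline{\mathbb{CP}_{2}}$ with $m\geq 10$, and by then applying the Donaldson--Friedman gluing with one copy of $(\overline{\mathbb{CP}_{2}}, g_{FS})$ together with the nondegenerate-form construction. Your fleshed-out version (with $b_{+}=1$, a single $Q$, surjectivity of $r$ supplied by the intrinsic isomorphism $r_{2}$ of Corollary 2, and the resulting count $h^{1}=\dim\ker r=1$) is exactly the transfer the paper intends, including the closing remark that the conclusion cannot be upgraded to strictly almost-K\"ahler.
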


\vspace{50pt}

\section{\large\textbf{Appendix: Calculation of the second cohomology of the singular fiber}}\label{S:Intro}

In this section, we consider $H^{2}(Z_{0}, (\mathcal{O}_{\mathcal{Z}}(K)\otimes 2\mathcal{I}_{\tilde{\mathcal{F}}})_{0})$,
where $Z_{0}=\tilde{Z}\cup_{\mathscr{Q}}\tilde{\mathscr{F}}$ is obtained from the twistor space of K3 surface with Ricci-flat K\"ahler metric 
and 3 copies of twistor space of $(\overline{\mathbb{CP}_{2}}, g_{FS})$.

By Serre Duality 
\[H^{2}(Z_{t}, K_{Z_{t}})=H^{1}(Z_{t}, \mathcal{O})^{*}.\]
It was shown that [8], [11], [17] that $H^{1}(Z_{t}, \mathcal{O})$ corresponds to the first cohomology group of the following complex
\[\Lambda^{0}\xrightarrow{d}\Lambda^{1}\xrightarrow{d_{+}}\Lambda_{+}^{2},\]
where $\Lambda^{i}$ are complex-valued $i$-forms on $M$ and $d_{+}\omega$ is the self-dual part of $d\omega$ of a 1-form $\omega$.
From this, we get next useful result. We recall the proof briefly following Corollary 3.2 in [8].

\begin{Theorem}
[8, 11, 17] For the twistor space $Z$ of a compact, smooth, oriented riemannian 4-manifold with an anti-self-dual metric $(M, g)$, 
$H^{1}(Z, \mathcal{O})=H^{1}(M, \mathbb{C})$.
\end{Theorem}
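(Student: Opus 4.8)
The plan is to start from the identification, recalled just above, of $H^{1}(Z,\mathcal{O})$ with the first cohomology of the elliptic complex
\[\Lambda^{0}\xrightarrow{d}\Lambda^{1}\xrightarrow{d_{+}}\Lambda^{2}_{+},\]
and then to show directly that this cohomology is the de Rham group $H^{1}(M,\mathbb{C})$. Since $d$ and $d_{+}$ are real operators, the complexified complex is the complexification of its real counterpart, so it suffices to compute the first cohomology of the real complex and tensor with $\mathbb{C}$ at the end; I work with real forms below.

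First I would verify that the complex is elliptic: for $0\neq\xi\in T^{*}_{x}M$ the symbol sequence $\mathbb{R}\xrightarrow{\xi}\Lambda^{1}\xrightarrow{(\xi\wedge\cdot)_{+}}\Lambda^{2}_{+}$ is exact (the first map is injective, its image is the line $\mathbb{R}\xi$, and a short pointwise computation shows this is exactly the kernel of $\omega\mapsto(\xi\wedge\omega)_{+}$, the last map being surjective by dimension count). Ellipticity lets me invoke Hodge theory for the complex: every class in $H^{1}=\ker d_{+}/\operatorname{im}d$ has a unique representative $\omega\in\Lambda^{1}$ satisfying both $d_{+}\omega=0$ and $d^{*}\omega=0$, where $d^{*}$ is the formal adjoint of $d\colon\Lambda^{0}\to\Lambda^{1}$, i.e. the usual codifferential (the condition $d^{*}\omega=0$ being orthogonality to $\operatorname{im}d$). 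Thus the first cohomology of the complex is identified with the space of such $\omega$.

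The key step, and the only nontrivial one, is to show that any such $\omega$ is in fact closed. For this I would combine Stokes' theorem with the pointwise orthogonal decomposition $\Lambda^{2}=\Lambda^{2}_{+}\oplus\Lambda^{2}_{-}$. Since $M$ is closed and $d(\omega\wedge d\omega)=d\omega\wedge d\omega$, we have $\int_{M}d\omega\wedge d\omega=0$; on the other hand, writing $d\omega=d_{+}\omega+d_{-}\omega$, using $*d_{\pm}\omega=\pm d_{\pm}\omega$, $\alpha\wedge*\alpha=|\alpha|^{2}d\mu_{g}$, and the vanishing of the mixed term $d_{+}\omega\wedge d_{-}\omega$, one gets
\[0=\int_{M}d\omega\wedge d\omega=\int_{M}\left(|d_{+}\omega|^{2}-|d_{-}\omega|^{2}\right)d\mu_{g}.\]
With $d_{+}\omega=0$ this forces $\int_{M}|d_{-}\omega|^{2}\,d\mu_{g}=0$, hence $d_{-}\omega=0$ and so $d\omega=0$. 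Therefore $\omega$ satisfies $d\omega=0$ and $d^{*}\omega=0$, i.e. $\omega$ is a harmonic $1$-form.

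Finally, by Hodge theory the space of harmonic $1$-forms is canonically isomorphic to $H^{1}(M,\mathbb{R})$, and the reverse inclusion is immediate, since a harmonic $1$-form automatically obeys $d_{+}\omega=0$ and $d^{*}\omega=0$. Hence the two harmonic spaces coincide, giving $H^{1}(\text{complex})\cong H^{1}(M,\mathbb{R})$; complexifying yields $H^{1}(Z,\mathcal{O})\cong H^{1}(M,\mathbb{C})$. The main obstacle is exactly the middle step: upgrading the weak self-dual equation $d_{+}\omega=0$ (together with the gauge condition $d^{*}\omega=0$) to full closedness $d\omega=0$, which is precisely where the global signature-type identity $\int_{M}d\omega\wedge d\omega=0$ and the positivity of $\int_{M}|d_{-}\omega|^{2}\,d\mu_{g}$ do the work.
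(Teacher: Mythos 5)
Your proof is correct and is essentially the paper's argument: the heart of both is the Stokes-type identity $\int_{M}d\omega\wedge d\omega=0$, which, since $d\omega$ is anti-self-dual when $d_{+}\omega=0$ (equivalently, your decomposition $\int_{M}(|d_{+}\omega|^{2}-|d_{-}\omega|^{2})\,d\mu_{g}=0$), forces $d\omega=0$. The only difference is that your detour through elliptic Hodge theory and the gauge $d^{*}\omega=0$ is superfluous -- the key step shows $\ker d_{+}=\ker d$ on $\Lambda^{1}$ outright, so the first cohomology of the complex is literally $\ker d/\operatorname{im}d=H^{1}(M,\mathbb{C})$, which is how the paper concludes.
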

\begin{proof}
By the above argument, it suffices to show that if $d_{+}\omega=0$, then $d\omega=0$. 
Define $\alpha:=d\omega$. Then $*\alpha=-\alpha$ by definition of $d_{+}$ and $d_{+}\omega=0$. 
Then we have 
\[||\alpha||^{2}=\int_{M}\alpha\wedge *\alpha=-\int_{M}\alpha\wedge\alpha=-\int_{M}d\omega\wedge\alpha=-\int_{M}\omega\wedge d\alpha=0.\]

\end{proof}

\begin{Lemma}
For a twistor space $Z$ of $K3$ surface with Ricci-flat K\"ahler metric, we have $H^{2}(Z, \mathcal{O}_{Z}(K_{Z}))=0$. 
For $Z_{t}$, which is a twistor space of $(K3\#n\overline{\mathbb{CP}_{2}}, g_{t})$,
where $g_{t}$ is a family of anti-self-dual metrics constructed in [6], we have  $H^{2}(Z_{t}, \mathcal{O}_{Z_{t}}(K_{Z_{t}}))=0$. 
\end{Lemma}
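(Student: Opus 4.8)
The plan is to reduce both statements to a single cohomology vanishing on the base four-manifold, by combining Serre duality with Theorem 10. Since $M$ is compact, its twistor space $Z$ is a compact complex $3$-fold, so Serre duality applies and gives
\[H^{2}(Z, \mathcal{O}_{Z}(K_{Z})) \cong H^{1}(Z, \mathcal{O}_{Z})^{*}.\]
Thus it suffices to prove $H^{1}(Z, \mathcal{O}_{Z}) = 0$ in each case, and likewise $H^{1}(Z_{t}, \mathcal{O}_{Z_{t}}) = 0$.

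For this I would invoke Theorem 10, which identifies $H^{1}(Z, \mathcal{O}_{Z})$ with $H^{1}(M, \mathbb{C})$ for the twistor space of any compact anti-self-dual $(M, g)$. The problem then becomes purely topological: I need only show $b_{1}(M) = 0$, so that $H^{1}(M, \mathbb{C}) = 0$.

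First, for $M = K3$: the $K3$ surface is simply connected, hence $H^{1}(K3, \mathbb{C}) = 0$, so by Theorem 10 we get $H^{1}(Z, \mathcal{O}_{Z}) = 0$ and therefore $H^{2}(Z, \mathcal{O}_{Z}(K_{Z})) = 0$. Next, for $M = K3 \# n\overline{\mathbb{CP}_{2}}$: both $K3$ and $\overline{\mathbb{CP}_{2}}$ are simply connected, and the connected sum of simply connected manifolds of dimension $\geq 3$ is simply connected (by van Kampen's theorem), so $\pi_{1}(M) = 0$ and in particular $H^{1}(M, \mathbb{C}) = 0$. Applying Theorem 10 to the twistor space $Z_{t}$ of $(K3\#n\overline{\mathbb{CP}_{2}}, g_{t})$ and then Serre duality as above yields $H^{2}(Z_{t}, \mathcal{O}_{Z_{t}}(K_{Z_{t}})) = 0$.

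I expect no serious obstacle here, as the argument is a direct chaining of results already in hand: Serre duality on the compact complex threefold $Z$, the just-established Theorem 10, and the vanishing of the first Betti number. The only point worth stating carefully is that Theorem 10 applies to $Z_{t}$ uniformly in the parameter $t$, since for every $t$ the space $Z_{t}$ is the twistor space of the \emph{compact} anti-self-dual manifold $(K3\#n\overline{\mathbb{CP}_{2}}, g_{t})$ from [6]; hence the identification $H^{1}(Z_{t}, \mathcal{O}_{Z_{t}}) \cong H^{1}(M, \mathbb{C})$ holds for each such $t$, and the conclusion follows for the whole family.
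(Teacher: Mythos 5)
Your proof is correct and follows exactly the paper's own route: Serre duality reduces $H^{2}(Z, \mathcal{O}_{Z}(K_{Z}))$ to $H^{1}(Z, \mathcal{O}_{Z})^{*}$, Theorem 10 identifies this with $H^{1}(M, \mathbb{C})$, and simple connectedness of $K3$ and $K3\#n\overline{\mathbb{CP}_{2}}$ finishes the argument. You have merely spelled out the van Kampen step and the uniformity in $t$, which the paper leaves implicit.
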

\begin{proof}
Since $K3$ surface and $K3\#n\overline{\mathbb{CP}_{2}}$ are simply connected, 
we get immediately the conclusion from Theorem 10 and Serre Duality. 
\end{proof}

In Corollary 3, it is shown that either $H^{2}(\tilde{F}, \mathcal{O}_{\tilde{F}}(K_{\tilde{F}}\otimes n[Q]))=0$ or 
$h^{2}(\tilde{F}, \mathcal{O}_{\tilde{F}}(K_{\tilde{F}}\otimes n[Q]))=1$ for $n=1, 2, 3$.
We claim if $H^{2}(\tilde{F}, \mathcal{O}_{\tilde{F}}(K_{\tilde{F}}\otimes n[Q]))=0$, then $H^{2}(Z_{0}, (\mathcal{O}_{\mathcal{Z}}(K)\otimes 2\mathcal{I}_{\tilde{\mathscr{F}}})_{0})=0$
and if $h^{2}(\tilde{F}, \mathcal{O}_{\tilde{F}}(K_{\tilde{F}}\otimes n[Q]))=1$, 
then $h^{2}(Z_{0}, (\mathcal{O}_{\mathcal{Z}}(K)\otimes 2\mathcal{I}_{\tilde{\mathscr{F}}})_{0})=3$

\begin{Proposition}
If $H^{2}(\tilde{F}, \mathcal{O}_{\tilde{F}}(K_{\tilde{F}}\otimes [Q])) =0$, then $H^{2}(Z_{0}, (\mathcal{O}_{\mathcal{Z}}(K)\otimes 2\mathcal{I}_{\tilde{\mathscr{F}}})_{0})=0$. 
\end{Proposition}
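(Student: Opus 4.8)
The plan is to extract $H^{2}(Z_{0}, (\mathcal{O}_{\mathcal{Z}}(K)\otimes 2\mathcal{I}_{\tilde{\mathscr{F}}})_{0})$ from the very same normalization long exact sequence used in the proof of Proposition 3, and to show that the groups flanking it either vanish or surject appropriately. Writing $a:\tilde{Z}\amalg\tilde{\mathscr{F}}\to Z_{0}$ for the normalization, the relevant fragment is
\[
\cdots \xrightarrow{\ r\ } H^{1}(\mathscr{Q}, \mathcal{O}_{\mathscr{Q}}(\pi^{*}K_{Z}|_{\mathscr{Q}}))
\xrightarrow{\ \delta\ } H^{2}(Z_{0}, (\mathcal{O}_{\mathcal{Z}}(K)\otimes 2\mathcal{I}_{\tilde{\mathscr{F}}})_{0})
\xrightarrow{\ g\ } H^{2}(\tilde{Z}, \mathcal{O}_{\tilde{Z}}(K_{\tilde{Z}}\otimes[-\mathscr{Q}]))\oplus H^{2}(\tilde{\mathscr{F}}, \mathcal{O}_{\tilde{\mathscr{F}}}(K_{\tilde{\mathscr{F}}}\otimes[3\mathscr{Q}]))\to\cdots
\]
Exactness yields the short exact sequence $0\to \operatorname{coker} r\to H^{2}(Z_{0},\cdots)\to \operatorname{im} g\to 0$, so it suffices to prove two things: that $r$ is surjective, and that the target of $g$ vanishes.

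For the surjectivity of $r$ I would use the explicit description $r(\alpha,\beta_{1},\beta_{2},\beta_{3})=(\alpha|_{Q_{i}}-\beta_{i}|_{Q_{i}})_{i}$ together with Corollary 2, which asserts that each $\beta_{i}\mapsto\beta_{i}|_{Q_{i}}$ is already an isomorphism $H^{1}(\tilde{F}_{i},\mathcal{O}_{\tilde{F}_{i}}(K_{\tilde{F}_{i}}\otimes 3[Q_{i}]))\xrightarrow{\ \sim\ }H^{1}(Q_{i},\mathcal{O}_{Q_{i}}(K_{\tilde{F}_{i}}\otimes 3[Q_{i}]|_{Q_{i}}))$. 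Hence for any prescribed target $(\eta_{1},\eta_{2},\eta_{3})\in H^{1}(\mathscr{Q},\cdots)=\bigoplus_{i}H^{1}(Q_{i},\cdots)$ one simply sets $\alpha=0$ and $\beta_{i}=-r_{2}^{-1}(\eta_{i})$, giving $r(0,\beta_{1},\beta_{2},\beta_{3})=(\eta_{1},\eta_{2},\eta_{3})$. Thus $\operatorname{coker} r=0$ (this is also consistent with the dimension count $\dim\ker r=3$ from Proposition 3).

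For the vanishing of the $H^{2}$ terms I would treat the two normalization pieces separately. On the $\tilde{Z}$ side, Remark 2 identifies $\mathcal{O}_{\tilde{Z}}(K_{\tilde{Z}}\otimes[-\mathscr{Q}])$ with $\mathcal{O}_{\tilde{Z}}(\pi^{*}K_{Z})$, and the Leray/projection-formula computation already carried out in the paper gives $H^{2}(\tilde{Z},\mathcal{O}_{\tilde{Z}}(\pi^{*}K_{Z}))=H^{2}(Z,\mathcal{O}_{Z}(K_{Z}))$, which is zero by the preceding Lemma since $Z$ is the twistor space of a Ricci-flat K\"ahler $K3$. On the $\tilde{\mathscr{F}}$ side, $\tilde{\mathscr{F}}$ is disjoint in the normalization, so $H^{2}(\tilde{\mathscr{F}},\cdots)=\bigoplus_{i}H^{2}(\tilde{F}_{i},\mathcal{O}_{\tilde{F}_{i}}(K_{\tilde{F}_{i}}\otimes 3[Q_{i}]))$, and here the hypothesis enters: by Remark 4, the assumption $H^{2}(\tilde{F},\mathcal{O}_{\tilde{F}}(K_{\tilde{F}}\otimes [Q]))=0$ forces $H^{2}(\tilde{F},\mathcal{O}_{\tilde{F}}(K_{\tilde{F}}\otimes 3[Q]))=0$. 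Combining these, the target of $g$ is zero, so $\operatorname{im} g=0$, and with $\operatorname{coker} r=0$ the short exact sequence gives $H^{2}(Z_{0},(\mathcal{O}_{\mathcal{Z}}(K)\otimes 2\mathcal{I}_{\tilde{\mathscr{F}}})_{0})=0$.

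The bookkeeping is routine once the long exact sequence is set up; the single genuine input is the hypothesis, which is precisely what is needed to annihilate the $\tilde{\mathscr{F}}$-contribution via Remark 4. I therefore expect that step, rather than any computation, to be the essential point — it is where the conclusion would break down if the hypothesis were removed.
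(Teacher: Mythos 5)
Your proposal is correct and takes essentially the same route as the paper's proof: the same normalization long exact sequence, with surjectivity of $r$ deduced from Corollary 2, vanishing of $H^{2}(\tilde{Z}, \mathcal{O}_{\tilde{Z}}(K_{\tilde{Z}}\otimes[-\mathscr{Q}]))=H^{2}(Z,\mathcal{O}_{Z}(K_{Z}))$ via Lemma 16, and the hypothesis annihilating the $\tilde{\mathscr{F}}$-terms via Remark 4. Your explicit verification that $r$ is onto (taking $\alpha=0$ and $\beta_{i}=-r_{2}^{-1}(\eta_{i})$) merely spells out a step the paper leaves implicit.
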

\begin{proof}
We consider again the long exact sequence
\[0 \longrightarrow H^{1}(Z_{0}, (\mathcal{O}_{\mathcal{Z}}(K)\otimes 2\mathcal{I}_{\tilde{\mathscr{F}}})_{0})
\xrightarrow{f} H^{1}(\tilde{Z}, \mathcal{O}_{\tilde{Z}}(K_{\tilde{Z}}\otimes[-\mathscr{Q}]))\oplus
H^{1}(\tilde{\mathscr{F}}, \mathcal{O}_{\tilde{\mathscr{F}}}(K_{\tilde{\mathscr{F}}}\otimes[3\mathscr{Q}]))\longrightarrow\]
\[\xrightarrow{r} H^{1}(\mathscr{Q}, \mathscr{O}_{Q}(\pi^{*}K_{Z}|_{\mathscr{Q}}))\xrightarrow{g}
 H^{2}(Z_{0}, (\mathcal{O}_{\mathcal{Z}}(K)\otimes 2\mathcal{I}_{\tilde{\mathscr{F}}})_{0})\longrightarrow\]
\[\xrightarrow{h} H^{2}(\tilde{Z}, \mathcal{O}_{\tilde{Z}}(K_{\tilde{Z}}\otimes[-\mathscr{Q}]))\oplus
H^{2}(\tilde{\mathscr{F}}, \mathcal{O}_{\tilde{\mathscr{F}}}(K_{\tilde{\mathscr{F}}}\otimes[3\mathscr{Q}]))\longrightarrow0.\]
From Remark 4, if $H^{2}(\tilde{F},  \mathcal{O}_{\tilde{F}}(K_{\tilde{F}}\otimes [Q]))=0$, 
then $H^{2}(\tilde{\mathscr{F}}, \mathcal{O}_{\tilde{\mathscr{F}}}(K_{\tilde{\mathscr{F}}}\otimes[3\mathscr{Q}]))=0$.
Moreover, from Lemma 16, we get $H^{2}(\tilde{Z}, \mathcal{O}_{\tilde{Z}}(K_{\tilde{Z}}\otimes[-\mathscr{Q}]))
=H^{2}(\tilde{Z}, \mathcal{O}_{\tilde{Z}}(\pi^{*}K_{Z}))=H^{2}(Z, \mathcal{O}_{Z}(K_{Z}))=0$. 
From Corollary 2, we get $r$ is surjective. Therefore, we get 
$H^{2}(Z_{0}, (\mathcal{O}_{\mathcal{Z}}(K)\otimes 2\mathcal{I}_{\tilde{\mathscr{F}}})_{0})=0$.
\end{proof}

\begin{Proposition}
If $h^{2}(\tilde{F}, \mathcal{O}_{\tilde{F}}(K_{\tilde{F}}\otimes [Q]))=1$, 
then $h^{2}(Z_{0}, (\mathcal{O}_{\mathcal{Z}}(K)\otimes 2\mathcal{I}_{\tilde{\mathscr{F}}})_{0})=3$.
\end{Proposition}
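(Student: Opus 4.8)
The plan is to read off $h^{2}(Z_{0}, (\mathcal{O}_{\mathcal{Z}}(K)\otimes 2\mathcal{I}_{\tilde{\mathscr{F}}})_{0})$ from the same normalization long exact sequence used in Propositions 3 and 7, now using the hypothesis to pin down the second cohomology of $\tilde{\mathscr{F}}$. First I would record the inputs on the two smooth pieces. On the $K3$ side, Lemma 16 together with the argument of Proposition 7 gives $H^{2}(\tilde{Z}, \mathcal{O}_{\tilde{Z}}(K_{\tilde{Z}}\otimes[-\mathscr{Q}]))=H^{2}(Z, \mathcal{O}_{Z}(K_{Z}))=0$. On the flag-manifold side, I would convert the hypothesis $h^{2}(\tilde{F}, \mathcal{O}_{\tilde{F}}(K_{\tilde{F}}\otimes[Q]))=1$ into $h^{2}(\tilde{F}, \mathcal{O}_{\tilde{F}}(K_{\tilde{F}}\otimes 3[Q]))=1$. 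This is where the one genuinely delicate point lies: in the case analysis preceding Remark 4, when $h^{2}(K_{\tilde{F}}\otimes[Q])=1$ there were a priori two subcases, namely $h^{2}(K_{\tilde{F}}\otimes 3[Q])=1$ with $r_{2}$ an isomorphism, or $h^{2}(K_{\tilde{F}}\otimes 3[Q])=0$ with $h^{1}(K_{\tilde{F}}\otimes 3[Q])=2$ and $r_{2}$ merely injective. Since Corollary 2 has already established that $r_{2}$ is an isomorphism, the second subcase is excluded, and I conclude $h^{2}(\tilde{F}, \mathcal{O}_{\tilde{F}}(K_{\tilde{F}}\otimes 3[Q]))=1$, hence $h^{2}(\tilde{\mathscr{F}}, \mathcal{O}_{\tilde{\mathscr{F}}}(K_{\tilde{\mathscr{F}}}\otimes[3\mathscr{Q}]))=3$.

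Next I would control the connecting map $g$ by a dimension count on the part of the sequence already analyzed. Because $r_{2}$ is an isomorphism, $h^{1}(\tilde{F}, \mathcal{O}_{\tilde{F}}(K_{\tilde{F}}\otimes 3[Q]))=3$ and thus $h^{1}(\tilde{\mathscr{F}}, \mathcal{O}_{\tilde{\mathscr{F}}}(K_{\tilde{\mathscr{F}}}\otimes[3\mathscr{Q}]))=9$; combined with $h^{1}(\tilde{Z}, \mathcal{O}_{\tilde{Z}}(K_{\tilde{Z}}\otimes[-\mathscr{Q}]))=3$, the middle term of the sequence has dimension $12$. The kernel of $r$ equals $\mathrm{Im}\,f$, which has dimension $3$ by Proposition 3, so $\dim\mathrm{Im}\,r = 12-3 = 9 = h^{1}(\mathscr{Q}, \mathcal{O}_{\mathscr{Q}}(\pi^{*}K_{Z}|_{\mathscr{Q}}))$. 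Hence $r$ is surjective, which forces the connecting map $g$ to vanish and therefore the next map $h$ to be injective.

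Finally I would conclude. Since the sequence terminates with $\to 0$ immediately after $h$, the map $h$ is also surjective, so it is an isomorphism $H^{2}(Z_{0}, (\mathcal{O}_{\mathcal{Z}}(K)\otimes 2\mathcal{I}_{\tilde{\mathscr{F}}})_{0}) \cong H^{2}(\tilde{Z}, \mathcal{O}_{\tilde{Z}}(K_{\tilde{Z}}\otimes[-\mathscr{Q}]))\oplus H^{2}(\tilde{\mathscr{F}}, \mathcal{O}_{\tilde{\mathscr{F}}}(K_{\tilde{\mathscr{F}}}\otimes[3\mathscr{Q}]))$. By the two computations above the right-hand side has dimension $0+3=3$, giving $h^{2}(Z_{0}, (\mathcal{O}_{\mathcal{Z}}(K)\otimes 2\mathcal{I}_{\tilde{\mathscr{F}}})_{0})=3$. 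The main obstacle is purely the bookkeeping of the first step, namely invoking Corollary 2 to kill the alternative subcase and certify $h^{2}(\tilde{F}, \mathcal{O}_{\tilde{F}}(K_{\tilde{F}}\otimes 3[Q]))=1$; once that dimension is fixed, the remainder of the sequence is forced.
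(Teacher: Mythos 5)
Your proof is correct and follows essentially the same route as the paper: the normalization long exact sequence from Proposition 7, with $H^{2}(\tilde{Z},\mathcal{O}_{\tilde{Z}}(K_{\tilde{Z}}\otimes[-\mathscr{Q}]))=0$ from Lemma 16, surjectivity of $r$, and $h^{2}(\tilde{\mathscr{F}},\mathcal{O}_{\tilde{\mathscr{F}}}(K_{\tilde{\mathscr{F}}}\otimes[3\mathscr{Q}]))=3$ forcing $h$ to be an isomorphism onto a $3$-dimensional space. Your only deviations are refinements: you justify $h^{2}(\tilde{F},\mathcal{O}_{\tilde{F}}(K_{\tilde{F}}\otimes 3[Q]))=1$ by explicitly using Corollary 2 to exclude the second subcase of Lemma 13 (where the paper somewhat loosely cites Corollary 3), and you obtain surjectivity of $r$ by a dimension count via Proposition 3 rather than directly from $r_{2}$ being an isomorphism.
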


\begin{proof}
Again we consider the long exact sequence given in the proof of Proposition 7. 
Note that from Corollary 2, we get $r$ is surjective and from Corollary 3, we get $h^{2}(\tilde{F}, \mathcal{O}_{\tilde{F}}(K_{\tilde{F}}\otimes3[Q]))=1$. 
From this, we get $h^{2}(\tilde{\mathscr{F}}, \mathcal{O}_{\tilde{\mathscr{F}}}(K_{\tilde{\mathscr{F}}}\otimes[3\mathscr{Q}]))=3$. 
Thus, we get $h^{2}(Z_{0}, (\mathcal{O}_{\mathcal{Z}}(K)\otimes 2\mathcal{I}_{\tilde{\mathscr{F}}})_{0})=
h^{2}(\tilde{\mathscr{F}}, \mathcal{O}_{\tilde{\mathscr{F}}}(K_{\tilde{\mathscr{F}}}\otimes[3\mathscr{Q}]))=3$.

\end{proof}

\begin{Remark}
From Lemma 16, we have  $H^{2}(Z_{t}, \mathcal{O}_{Z_{t}}(K_{t}))=H^{1}(Z_{t}, \mathcal{O})^{*}=0$. 
Then depending on $h^{2}(\tilde{F}, \mathcal{O}_{\tilde{F}}(K_{\tilde{F}}\otimes[Q]))$, 
 $h^{2}(Z_{0}, (\mathcal{O}_{\mathcal{Z}}(K)\otimes 2\mathcal{I}_{\tilde{\mathscr{F}}})_{0})=0$ or $3$.
Thus, we cannot conclude about $h^{2}(Z_{0}, (\mathcal{O}_{\mathcal{Z}}(K)\otimes 2\mathcal{I}_{\tilde{\mathscr{F}}})_{0})$.

\end{Remark}

\vspace{20pt}

\newpage

\renewcommand{\refname}{Bibliography}

\vspace{20pt}
Department of Mathematics Education, Korea National University of Education

Email address: kiysd@snu.ac.kr

\end{document}